\documentclass[a4paper]{article}
\usepackage{amssymb}
\usepackage{amsthm}
\usepackage{amsmath}

\usepackage{mathrsfs}
\usepackage{dsfont}
\usepackage{xcolor}
\usepackage[T1]{fontenc}
\usepackage[utf8]{inputenc}
\usepackage{hyperref}

\textwidth 6.0in\textheight 8.2in
\evensidemargin -1pt\oddsidemargin 0pt \topskip -1in \topmargin 0pt

\newcommand\R{\mathbb{R}}

\newcommand\N{\mathbb{N}}

\newtheorem{thm}{Theorem}[section]
\newtheorem{cor}[thm]{Corollary}
\newtheorem{defi}[thm]{Definition}

\newtheorem{examp}[thm]{Example}
\newtheorem{lem}[thm]{Lemma}
\newtheorem{prop}[thm]{Proposition}
\newtheorem{rk}[thm]{Remark}
\newtheorem{rks}[thm]{Remarks}

\title{Backward Stochastic Evolution Inclusions in UMD Banach Spaces}

\author{E. H.  Essaky \and M. Hassani \and C. E. Rhazlane \and \\
	Universit\'{e} Cadi Ayyad\\ Facult\'{e} Poly-disciplinaire de Safi\\
	Laboratoire de Mod\'{e}lisation et Combinatoire\\
	D\'{e}partement de Math\'{e}matiques et Informatique\\ B.P. 4162,
	Safi, Maroc.\\ \\ e-mails : essaky@uca.ac.ma\qquad m.hassani@uca.ma\qquad charafeddine.rhazlane@gmail.com}
\date{}


\begin{document}

\maketitle

\begin{abstract}
In this paper, we prove the existence of a mild $L^p$-solution for the backward stochastic evolution inclusion (BSEI for short) of the form
\begin{align*}
	\begin{cases}
		dY_t+AY_tdt\in G(t,Y_t,Z_t)dt+Z_tdW_t,\quad t\in [0,T]\\
		Y_T =\xi,
	\end{cases}
\end{align*}
where $W=(W_t)_{t\in [0,T]}$ is a standard Brownian motion, $A$ is the generator of a $C_0$-semigroup on a UMD Banach space $E$,  $\xi$ is a terminal condition from $L^p(\Omega,\mathscr{F}_T;E)$, with $p>1$ and $G$ is a set-valued function satisfying some suitable conditions.

The case when the processes with values in spaces that have martingale type $2$, has been also studied.

\end{abstract}

\noindent\textbf{\textit{Keywords:}} Backward stochastic differential equations, Backward stochastic evolution equations, Backward stochastic evolution inclusions, Stochastic integration in UMD Banach spaces, $\gamma$-radonifying operators, ...\\[0.4cm]
 \noindent\textbf{\textit{ AMS classification:}} Primary: 60H10,  60H15. Secondary: 47D06.
\section{Introduction}

Backward stochastic differential equations (BSDEs for short)
have been introduced long time ago by J. B. Bismut \cite{bis} both
as the equations for the adjoint process in the stochastic version
of Pontryagin maximum principle as well as the model behind the
Black and Scholes formula for the pricing and hedging of options in
mathematical finance. However, the first published paper on nonlinear
BSDEs appeared in 1990, by Pardoux and Peng \cite{pp1}. A
solution for such an equation is a couple of adapted processes $(Y,
Z)$ with values in $\R\times\R^d$ satisfying
\begin{equation}\label{equa0}
	Y_t = \xi + \displaystyle\int_t^T f(u,Y_u,Z_u)du - \displaystyle\int_t^T Z_u dW_u,
	\quad\quad 0\leq u\leq T.
\end{equation}
In \cite{pp1}, the authors have proved the existence and uniqueness
of the solution under conditions including basically the Lipschitz
continuity of the generator $f$.

Later on, the study of BSDEs has been motivated by their many
applications in mathematical finance, stochastic control and the
second order PDE theory (see, for example, \cite{KPQ, HL, Kob, pp1, PP2} and the references therein).

Extending the results on BSDEs to the theory of Backward Stochastic Differential Inclusions (BSDI for short) has attracted many authors.  In the finite dimension case, we mention the work \cite{kisielewicz} where the following  BSDI has been studied:
\begin{eqnarray}\label{BSDI1}
\begin{cases}
Y_s\in \mathbb{E}\left[ Y_t+\displaystyle\int_s^t G(u,Y_u)du\vert \mathscr{F}_s\right]\\
Y_T\in H(Y_T),
\end{cases}
\end{eqnarray}
where $\mathbb{F}$ is a filtration satisfying the usual hypothesis on $\mathcal{P}_{\mathbb{F}}$, $G$ and $H$ are measurable set-valued mappings from $[0,T]\times \mathbb{R}^m$ into $\mathscr{K}(\mathbb{R}^m)$, where $\mathscr{K}(\mathbb{R}^m)$ denotes the set of all non-empty closed subsets of $\mathbb{R}^m$, and  $\mathbb{E}\left[ Y_t+\int_s^t G(u,Y_u)du\vert \mathscr{F}_s\right]$ denotes the set-valued conditional expectation (see \cite{Hiai} and \cite{papa} for more details) of the set-valued mapping $Y_t(.)+\int_s^t G(u,Y_u(.))du$ with respect to $\mathscr{F}_s$. The author has proved the existence of strong and weak solutions by fulfilling suitable conditions on the data.

We mention also the paper \cite{BAO} where the following BSDI has been considered:
\begin{eqnarray}\label{BSDI2}
\begin{cases}
Y_s \in  \xi+\displaystyle\int_s^1H(t,Y_t,Z_t)dt-\int_s^1Z_tdW_t\\
Y_1 = \xi,
\end{cases}
\end{eqnarray}
where $\xi\in L_{\mathbb{R}^m}^2(\Omega,\mathscr{F},\mathbb{P})$ and $H$ is a borelian set-valued defined on $[0,1]\times\Omega\times \mathbb{R}^m\times \mathbb{R}^{m\times d}$ with convex and compact values in $\mathbb{R}^m$. Using Picard's iterative method, the authors have proved the existence of an adapted solution $(Y^*,Z^*)$ for the BSDI (\ref{BSDI2}) under Lipschitz  condition on the generator $H$ expressed using the Hausdorff distance $\delta_{\mathbb{R}^m}$ on $\mathscr{K}_{cmpt}(\mathbb{R}^m)$ and with the square integrability of $\delta_{\mathbb{R}^m}(H(\,.\,,0_{\mathbb{R}^m},0_{\mathbb{R}^{m\times d}}),\left\lbrace 0_{\mathbb{R}^m}\right\rbrace)$ .\\
\par In the infinite dimensional case, in order to delve into the possibility of generalizing the theory of BSDI in Banach spaces, it was necessary to study and choose adequate stochastic integration theory, see for example \cite{neerven0}, and the work \cite{neerven3} in which the authors indicate that the Banach space $L^p(\Omega,\gamma(0,T;E))$ provided the right setting to establish a rich theory of stochastic integration of adapted processes with values in a UMD space $E$, by giving characterizations of the class of stochastically integrable processes.

Q. L\"u and J. v. Neerven in \cite{BSEE.B} have been invested the fertility of the results in the framework of stochastic integration in UMD Banach Spaces in order to extend both, what was presented by Pardoux and Peng in \cite{pp1} and what was released by Hu and Peng in \cite{BSSE.H}. They studied  the following backward stochastic evolution equation of the form
\begin{eqnarray}\label{BSEE3}
	\begin{cases}
		dY_t+AY_tdt= f(t,Y_t,Z_t)dt+Z_tdW_t,\quad t\in [0,T],\\
		Y_T=\xi,
	\end{cases}
\end{eqnarray}
where  $A$ is the generator of a $C_0$-semigroup on a UMD Banach space $E$ and $W$ is the standard Brownian motion process with values in $\mathbb{R}$. The authors have proved the existence and uniqueness of a specific type of solution for BSEE (\ref{BSEE3}) under suitable integrability and Lipschitz continuity on the coefficient $f$.

In this paper, our goal is to extend the result by L\"u and Nerveen in \cite{BSEE.B}, by proving the existence of a mild $L^p$-solution to the following Backward Stochastic Evolution Inclusions (BSEI for short) of the form
\begin{eqnarray}\label{BSDI3}
		\begin{cases}
		dY_t+AY_tdt\in G(t,Y_t,Z_t)dt+Z_tdW_t,\quad t\in [0,T]\\
		Y_T =\xi,
	\end{cases}
\end{eqnarray}
where $G$ is a set-valued function, and $\xi$ is a terminal condition belongs to $L^p(\Omega,\mathscr{F}_T;E)$ with $p$ is a fixed real number greater than $1$. In order to prove our result we introduce, for every $\delta\in (0,T)$  and each $\mathscr{B}([T-\delta,T])\otimes \mathscr{F}_T$-measurable processes $Y, Z$ defining elements of $L_{\mathbb{F}}^p(\Omega;\gamma(T-\delta;E))$, the subsets $\Lambda_{T-\delta,T}^{Y, Z}$ of the same Banach space $L_{\mathbb{F}}^p(\Omega;\gamma(T-\delta,T;E))$, where its elements can be defined by $\mathscr{B}([T-\delta,T])\otimes \mathscr{F}_T$-measurable mappings which belong to $G(., {Y}, {Z})$ for almost everywhere $(t,\omega)$ in $[T-\delta,T]\times \Omega$.
These subsets allow us to construct, by using Picard's approximation, sequences $\left(Y^n\right)_{n\in \mathbb{N}}$, $\left(Z^n\right)_{n\in \mathbb{N}}$, $\left(g^n\right)_{n\in \mathbb{N}}$ of $E$-valued measurable processes, such that for any natural number $n$ and any real number $t$ in  $[T-\delta,T]$ we have
\begin{eqnarray}\label{Picard}
\left\{
	\begin{array}{lll}
		 (i) &
g^n \text{ defines an element from } \Lambda_{T-\delta, T}^{Y^{n-1},Z^{n-1}} \text{ such that }

	\\ &
	$$
	\left\Vert g^{n}-g^{n-1}\right\Vert_{L_{\mathbb{F}}^p\left(\Omega;\gamma\left(T-\delta, T;E\right)\right)}\leq \rho\left( \Lambda_{T-\delta, T}^{Y^{n-2},Z^{n-2}},\Lambda_{T-\delta, T}^{Y^{n-1},Z^{n-1}}\right)+\varepsilon_{n-1};
	$$
\\ (ii) & 	Y^n\; (resp.\, Z^n)\text{ defines element of the space  }L_{\mathbb{F}}^p\left(\Omega;\gamma\left(T-\delta,T;E\right)\right) \text{ such that}  \\ &
	   Y_t^{n}+\displaystyle\int_t^{T} S(u-t) g_u^n du+\int_t^{T} S(u-t) Z_u^{n}dW_u=S(T-t)\xi\; \textrm{ in }L^p\left(\Omega;E\right),

	\end{array}
	\right.
\end{eqnarray}
where $\rho$ denotes the Hausdorff distance between two subsets of the Banach space $L_{\mathbb{F}}^p\left(\Omega;\gamma\left(T-\delta,T;E\right)\right)$, and $(\varepsilon_n)_{n\in \mathbb{N}}$ is a sequence of strictly positive real numbers. Initially the sequence $(\varepsilon_n)_{n\in \mathbb{N}}$ was chosen arbitrarily, and we then specify it according to a determined criterion which will allow us to have the convergence of the sequences thus constructed.  It should be noted that the second integral that appears instead of the Bochner integral in equation (\ref{Picard})-$(ii)$ is for the sake of readability with a slight abuse of notation, according to Kalton-Weis extension theorem. The next step in proving the existence of a mild $L^p$-solution for BSEI (\ref{BSDI3}) consists in proving that the sequences $\left(Y^n\right)_{n\in \mathbb{N}}$, $\left(Z^n\right)_{n\in \mathbb{N}}$ and $\left(g^n\right)_{n\in \mathbb{N}}$ converge in the space $L_{\mathbb{F}}^p\left(\Omega;\gamma\left({T-\delta},T;E\right)\right)$, for $\delta$ small enough and by an appropriate choice of $\left(\varepsilon_n\right)_{n\in \mathbb{N}}$. This leads us to solve locally the BSEI (\ref{BSDI3}) on $[T-\delta,T]$ and finally to build a mild $L^p$-solution in the interval $[0, T]$ by a concatenation procedure.

The notion of subtrajectory integrals [\cite{set-stochastic-ch2}, section $2.3$], has been included in many works to define the integrals of multivalued functions and give their properties. This led us to wonder about the conditions that allow deriving more properties of the sets $\Lambda_{0,T}^{Y,Z}$ and describing the link between them and the subtrajectory integrals associated with the same set-valued functions $G(.,Y,Z)$.

For the case where $E$ has martingale type $2$, the existence of a  mild $L^p$-solution for Equation  (\ref{BSDI3}) is proved by assuming that the set-valued $G$ satisfy some suitable conditions. We have also studied the special case when E is a Hilbert space with Lipschitz continuity condition using the Hausdorff distance on the set of non-empty compact subsets of $E$:
$$\varrho \left(G\left(.,y,z\right);G\left(.,y',z'\right)\right)\leq \tilde{K}\left(\left\Vert y-y'\right\Vert_{E}+\left\Vert z-z'\right\Vert_{E}\right),$$
and we have discussed the connection with the general case's conditions.

Let us describe our plan. First, most of the material used in this paper is defined in Section $2$. Section $3$ is devoted to prove the existence of a mild $L^p$-solution to the Equation $(\ref{BSDI3})$.

In Section 4, we are concerned by the case where $E$ has martingale type $2$ with more integrated hypotheses, and we solved the BSEI(\ref{BSDI3}) using the continuous embedding $L^2([0,T];E)\hookrightarrow \gamma(0,T;E)$ and the isometry $\gamma(0,T;E)=L^2([0,T];E)$ when $E$ is only a Hilbert space.


\section{Preliminaries}\label{sec:2}
In this section, we will provide some concepts and cognitive tools that will be effective and helpful during the analysis in the sequel. Essentially, we will recall the Pettis integrability, the Bochnner integrability, and some requirements that interfere with the results related to stochastic integration in UMD Banach spaces.

\subsection{Notations}

Throughout this paper, the normed vector spaces are assumed to be real, and we will always identify Hilbert spaces with their dual by means of the Riesz representation theorem.

Let $(E,\left\Vert\,.\,\right\Vert_E)$ be a Banach space, and let $\mathscr{H}$ be a Hilbert space with inner product $<.,.>_{\mathscr{H}}$.
The dual of $E$ will be denoted by $E^*$, the duality between $E$ and $E^*$ will be represented by $<x,x^*>$, and we will assign $0_E$ as an additive identity element of $E$.\\
We shall denote by $vect<x>$ the real vector space spanned by an element $x$ of $E$.\\
$\mathscr{B}(E)$ will denote the Borel $\sigma$-field of $E$ equipped with the norm topology.\\
We shall denote by $\mathcal{L}(\mathscr{H},E)$ (resp. $\mathcal{L}(E)$) the set of all bounded linear operators from $\mathscr{H}$ (resp. $E$) into $E$ (resp. itself).

We shall denote by $ \mathscr{K}_{cmpt}\left( E\right)$\, $\left(\textrm{resp.}\,\mathscr{K}_{ccmpt}\left( E\right)\right)$ the family of all non-empty compact ( resp. non-empty convex and compact) subsets of $E$.

Let $A$ and $B$ be two subsets of the space $\mathscr{K}_{cmpt}\left( E\right)$. The Hausdorff distance $\varrho\left( A,\,B\right)$ between $A$ and $B$ is defined as follows:
\begin{eqnarray}\label{ident100}
	\varrho\left( A,\,B\right) :=\,\max\left(\displaystyle{\sup_{ x\in A}}\;\displaystyle{\inf_{ y\in B}}\left\Vert x-y\right\Vert_E ,\,\displaystyle{\sup_{ y\in B}}\;\displaystyle{\inf_{ x\in A}}\left\Vert x-y\right\Vert_E\right).
\end{eqnarray}
We will use the notation $\left\vert A\right\vert_{\varrho}:=\varrho(A,\left\lbrace 0_E\right\rbrace)=\sup_{x\in A}\left\Vert x\right\Vert_E$.

Let $T$ and $p$ be two fixed real numbers such that $T>0$ and $p> 1$.

For two positive real quantities $L_1$ and $L_2$, we shall write $L_1\lesssim_{p} L_2$ (resp. $L_1\lesssim_{p,E} L_2$) to express that there exists a positive constant $\kappa_p$ (resp. $\kappa_{p,E}$) depending only on $p$ (resp. on $p$ and $E$) such that $L_1\leq \kappa_p L_2$ (resp. $L_1\leq \kappa_{p,E} L_2$ ), while the equivalence $L_1\eqsim_{p} L_2$ (resp. $L_1\eqsim_{p,E} L_2$) will mean that $L_1\lesssim_p L_2$ and $L_2\lesssim_p L_1$ (resp. $L_1\lesssim_{p,E} L_2$ and $L_2\lesssim_{p,E} L_1$).

Let $\left(\Omega,\mathscr{F},\mathbb{P}\right)$ be a probability space, equipped with the filtration $\left(\mathscr{F}_t\right)_{0\leq t\leq T}$ such that $\mathscr{F}_T=\mathscr{F}$.

$\Sigma_{\mathbb{F}}$ will denote the $\sigma$-algebra of all measurable and adapted subsets of $[0,T]\times \Omega$.

For a set-valued function $H:[0,T]\times\Omega\longrightarrow\mathscr{K}_{cmpt}(E)$, the subtrajectory integrals $S^2_{\Sigma_{\mathbb{F}}}(H)$ is defined as follows:
\begin{eqnarray}\label{ident208}
S^2_{\Sigma_{\mathbb{F}}}(H):=\left\lbrace h\in L^2([0,T]\times\Omega,\Sigma_{\mathbb{F}};E):h(u,\omega)\in H(u,\omega)\;a.e.(u,\omega)\in [0,T]\times\Omega\right\rbrace.
\end{eqnarray}

\subsection{Bochner and Pettis integrability}
We will introduce these two notions in a compatible way with the analysis necessary for our work, and we will give the most important results which relate to them. \\

Let $(\mathcal{S},\mathscr{A},\mu)$ be a $\sigma-$finite measure space, and let $q$ be a real number such that $q\geq 1$. 
\begin{defi}
\begin{itemize}
	\item A $\mu$-simple function $f:\mathcal{S}\longrightarrow E$ is of the form $\displaystyle{\sum_{n=1}^N\mathds{1}_{A_n}x_n}$, where $A_n\in \mathscr{A}$, $x_n\in E$ and $\mu(A_n)<+\infty$ for all $1\leq n\leq N$.
	\item A function $f:\mathcal{S}\longrightarrow E$ is measurable ($\mathscr{A}$-measurable) if $f^{-1}(O)\in \mathscr{A}$ for all open set $O$ in $E$.
	\item A real-valued function $f:\mathcal{S}\longrightarrow \R$ is $\mu$-measurable if it is the $\mu$-almost everywhere pointwise limit of real valued $\mu$-simple functions.

	\item[•]A function $f:\mathcal{S}\longrightarrow E$ is weakly measurable (resp. weakly $\mu$-measurable) if the function $s\longmapsto <f(s),x^*>$ is measurable (resp. $\mu$-measurable) for any $x^*\in E^*$.
	\item[•]A function $f:\mathcal{S}\longrightarrow E$ is strongly measurable (resp. strongly $\mu$-measurable) if there
	exists a sequence $(f_n)_{n\geq 1}$ of $\mu$-simple functions converging to $f$ everywhere ($\mu$-almost everywhere).
\end{itemize}
\end{defi}
It should be noted here that $\mu$-measurability, strong $\mu$-measurability and weak $\mu$-measurability (resp. measurability, strong measurability and weak measurability) are equivalent if $E$ is separable.

Let us call two functions which agree $\mu$-almost everywhere $\mu$-versions of
each other. In this case, a function $f:\mathcal{S}\rightarrow E$ is strongly $\mu$-measurable if and only if it has a $\mu$-version which is strongly measurable. For more details we refer to [\cite{neerven1}, Subsection $1.1$].
\begin{defi}
A strongly $\mu$-measurable $f:\mathcal{S}\longrightarrow E$ is called Bochner $q$-integrable if we have 
\begin{eqnarray*}
\left\Vert f\right\Vert^q_{L^q(\mathcal{S};E)}:=\int_{\mathcal{S}}\left\Vert f\right\Vert^q_Ed\mu<\infty,
\end{eqnarray*}
where the integral is in the Lebesgue sense.
\end{defi}
The linear space $L^q(\mathcal{S};E)$ consisting of (classes of a.e. equal) Bochner $q$-integrable functions $f:\mathcal{S}\longrightarrow E$, endowed with the norm $\left\Vert .\right\Vert_{L^q(\mathcal{S};E)}$, is a Banach space.\\

We notice that the $\mu$-simple functions are dense in the Bochner space $L^q(\mathcal{S};E)$, and for any $\mu$-simple function $g=\displaystyle{\sum_{n=1}^N\mathds{1}_{A_n}x_n}$ we define the Bochner integral of $g$ as follows :
$$\int_{\mathcal{S}}^{Bochner}gd\mu=\sum_{j=1}^{N}\mu(A_j)x_{j}.$$
This definition is independent of the representation of $g$.
\\Moreover, the Bochner integral of a function $f\in L^1(\mathcal{S};E)$, is by definition the limit of the Bochner integrals of $f_n$, where $(f_n)_{n\geq 1}$ is a sequence of $\mu$-simple functions convergent in $L^1(\mathcal{S};E)$ to $f$. Besides this, for every $A\in \mathscr{A}$, the Bochner integral of $f$ over $A$ is only the Bochner integral of $\mathds{1}_Af$.\\

\begin{defi}  A function $f:\mathcal{S}\rightarrow E$ is said to be weakly in $L^q(\mathcal{S})$ if it is weakly $\mu$-measurable and $<f,x^*>$ belongs to $L^q(\mathcal{S})$ for every $x^*\in E^*$.
\end{defi}
\begin{defi}\label{def210} Let $f:\mathcal{S}\rightarrow E$ be a function weakly in $L^1(\mathcal{S})$. 
$f$ is called Pettis integrable if for every $A\in \mathscr{A}$, there exists $\nu_f(A)\in E$ such that for every $x^*\in E^*$
$$<\nu_f(A),x^*>=\int_{A}<f,x^*>d\mu.$$
In this case, the set function $\nu_f : \mathscr{A}\rightarrow E$ is called the Pettis integral of $f$ with respect to $\mu$, and
$\nu_f(A)$ is called the Pettis integral of $f$ over $A\in \mathscr{A}$ with respect to $\mu$. We use the notation $\int_A^{Pettis}fd\mu:=\nu_f(A)$.
\end{defi}
\noindent It is worth mentioning that the set of all Pettis integrable functions from $\mathcal{S}$ to $E$ is a linear space, the Pettis integral operator possesses the linear property, and the Bochner integrability implies the Pettis one according to Theorem $1.4.1$ in \cite{set-stochastic-ch1}.\\

Next, let $f: \mathcal{S}\longrightarrow E$ is strongly $\mu$-measurable and
weakly in $L^2(\mathcal{S})$, then for all $g\in L^2(\mathcal{S})$ the function $s \longmapsto g(s)f(s)$ is Pettis
integrable according to Theorem $1.2.37$ in \cite{neerven1}. It thus makes sense to define the linear operator
\begin{eqnarray}\label{map1}
	\begin{array}{ccccc}
		\mathscr{I}_f & : &  L^2(\mathcal{S})& \longrightarrow & E\\
		& & g & \longmapsto & \displaystyle	\mathscr{I}_f(g)= \int_{\mathcal{S}}^{Pettis}g(u)f(u)d\mu(u).
	\end{array}
\end{eqnarray}
The closed graph theorem asserts that $\mathscr{I}_f$ is bounded, and for any $x^*\in E^*$ we have
\begin{eqnarray}\label{ident205}
\left\Vert <f,x^*>\right\Vert_{L^2(\mathcal{S})}\leq \left\Vert \mathscr{I}_f\right\Vert_{\mathcal{L}(L^2(\mathcal{S}),E)}\left\Vert x^*\right\Vert_{E^*}.
\end{eqnarray}
Moreover, $\mathscr{I}_f=\mathscr{I}_{h}$ for any function $h$ equal to $f$ almost everywhere.


\subsection{Functions defining a $\gamma$-radonifying operator}

The $\gamma$-radonifying operators play a crucial role in developing the theory of Gaussian measures in Banach spaces as well as stochastic integrals on some Banach spaces (cf. \cite{kuo, dett, brze1, brze2, neerven0}), and they have been used to characterize some nice geometric structure of the underlying Banach spaces, like type $2$ and cotype $2$ (see \cite{typeco}).\\
In this subsection, we will define the $\gamma$-radonifying operators from the notion of $\gamma$-summing operators. After that, we will introduce some useful notions and results associated with the spaces of $\gamma$-radonifying operators necessary for our study, like function defining a $\gamma$-radonifying operator, which participates in the construction of an integral with a slight abuse of notation.\\

Let $(\gamma_n')_{n\geq 1}$ be a sequence of independent standard Gaussian random variables on a probability space $(\Omega',\mathscr{F}',\mathbb{P}')$. 
\begin{defi}
A linear operator $\mathbb{T} : \mathscr{H}\longrightarrow E$ is called $\gamma$-summing if 
$$\sup \mathbb{E}'\left\Vert \sum_{j=1}^k \gamma'_j \mathbb{T}h_j\right\Vert_E^2 <+\infty,$$
where the supremum is taken over all finite orthonormal systems $\left\lbrace h_1,...,h_k\right\rbrace$ in $\mathscr{H}$.
\end{defi}

Taking into account [\cite{neerven2}, Proposition $9.1.2$], the linear space $\gamma_{\infty}(\mathscr{H},E)$ of all $\gamma$-summing operators from $\mathscr{H}$ to $E$, endowed with the norm 
\begin{eqnarray}\label{gamma1}
\left\Vert \mathbb{T}\right\Vert_{\gamma_{\infty}(\mathscr{H},E)}:= \left(\sup \mathbb{E}'\left\Vert \sum_{j=1}^k \gamma'_j \mathbb{T} h_j\right\Vert_E^2\right)^{\frac{1}{2}},
\end{eqnarray}
is a Banach space, and by considering singletons when taken the supremum in (\ref{gamma1}), one can see that every $\gamma$-summing operator is bounded.
\begin{defi}
An operator in $\mathcal{L}(\mathscr{H}, E)$ is said to be of finite rank if it is a linear combination
of operators of the form $h\otimes e$, where $h\in \mathscr{H}$, $e\in E$, and $h\otimes e$ is defined by $(h\otimes e)(h')=<h,h'>_{\mathscr{H}}e$ for every $h'\in \mathscr{H}$.\\
The space of all finite rank operators contained in $\mathcal{L}(\mathscr{H}, E)$ is denoted by $\mathscr{H}\otimes E$.
\end{defi}

According to Gram-Schmidt orthogonalisation argument, every finite rank operator can be represented in the form $\displaystyle{\sum_{j=1}^kh_j\otimes e_j}$, with $(h_j)_{1\leq j\leq k}$ is orthonormal in $\mathscr{H}$ and $(e_j)_{1\leq j\leq k}$ is a sequence in $E$. Under this representation, we have due to Proposition $9.1.3$ in \cite{neerven2} that
\begin{eqnarray}\label{gamma2}
\left\Vert \sum_{j=1}^kh_j\otimes e_j\right\Vert_{\gamma_{\infty}(\mathscr{H},E)}=\left\Vert \sum_{j=1}^k\gamma_j'e_j\right\Vert_{L^2(\Omega';E)}.
\end{eqnarray}
\begin{defi}
A linear operator $\mathbb{T}:\mathscr{H}\longrightarrow E$ is called $\gamma$-radonifying operator, if there exists a sequence of finite rank operators converging in $\gamma_{\infty}(\mathscr{H},E)$ to $\mathbb{T}$.\\
The space of all $\gamma$-radonifying operators is denoted by $\gamma(\mathscr{H},E)$, and its inherited norm by $\left\Vert .\right\Vert_{\gamma(\mathscr{H},E)}$.
\end{defi}
\noindent To provide some intuition on the role of the sequence $(\gamma_n')_{n\geq 1}$ we refer to [\cite{neerven2}, subsection $9.1.a$]. Here, we mention that the right-hand sides of (\ref{gamma1}) and (\ref{gamma2}) do not depend on the sequence $(\gamma_n')_{n\geq 1}$ and the probability space $(\Omega',\mathscr{F}',\mathbb{P}')$, according to covariance domination theorem [\cite{Abdillah}, Theorem $3.2.2$]. Moreover, the right-hand side of (\ref{gamma2}) is independent of the representation of the finite rank operator as long as we choose $(h_j)_{1\leq j\leq k}$ orthonormal in $\mathscr{H}$ (see [\cite{survey}, page $10$]).\\
We mention also the special case $\mathscr{H}=\R$, where from (\ref{gamma2}) we infer that $\gamma(\R,E)=E$ isometrically.\\

Let us consider the measure space  $([s,t],\mathscr{B}([s,t]),\lambda)$ where $s,t$ are two real numbers such that $0\leq s<t\leq T$, and $\lambda$ is the Lebesgue measure on the $\sigma$-field $\mathscr{B}([s,t])$.\\

The spaces of $\gamma$-radonifying operators which principally involved in our work are when the space $\mathscr{H}$ take the form $L^2([s,t])$, and we will write $\gamma(s,t;E)$ (resp. $\gamma_{\infty}(s,t;E)$) instead of  $\gamma(L^2([s,t]),E)$ (resp. $\gamma_{\infty}(L^2([s,t]),E)$).


\begin{defi}\label{def241}
A function $f:[s,t]\longrightarrow E$ is said to define an element of $\gamma(s,t;E)$ if it satisfies the following conditions:
\begin{itemize}
\item[(i)]
$f$ is strongly $\lambda$-measurable and weakly in $L^2([s,t])$;
\item[(ii)]
the Pettis integral operator $\mathscr{I}_f$ defined as in (\ref{map1}), is $\gamma$-radonifying.
\end{itemize}
In this case we write $f\in \gamma(s,t;E)$ and we define the $\gamma$-norm of $f$ by $\left\Vert f\right\Vert_{\gamma(s,t;E)}:=\left\Vert \mathscr{I}_f\right\Vert_{\gamma(s,t;E)}$.
\end{defi}
\begin{rks}\label{rk241}
\begin{itemize}
\item[$(a)$]We may think of the space $L^2([s,t])\otimes E$, in another way as the space of square integrable functions defined on $[s,t]$ with finite-dimensional range in $E$, and for every $(h,e)\in L^2([s,t])\times E$, the Pettis integral operator of $u\longmapsto h(u)e$ will be equal to $h\otimes e$.\\ As a space of functions, $L^2([s,t])\otimes E$ is dense in $L^2([s,t];E)$ according to [\cite{neerven1}, Lemma $1.2.19$].
\item[$(b)$]Let $f$ (resp. $h$) be a function belonging to $\gamma(s,t;E)$ (resp. $\gamma(0,T;E)$). Due to [\cite{neerven2}, Examples $9.1.11$-$9.1.12$], we have $$
\left\Vert f\right\Vert_{\gamma(s,t;E)}=\left\Vert \mathds{1}_{[s,t]}f\right\Vert_{\gamma(0,T;E)} (resp. \left\Vert h_{\mid [s,t]}\right\Vert_{\gamma(s,t;E)}\leq\left\Vert h\right\Vert_{\gamma(0,T;E)}),$$ 
where $h_{\mid [s,t]}$
is the restriction of $h$ on $[s,t]$.
\end{itemize}
\end{rks}

The computation of the norm of a $\gamma$-radonifying operator can be made in a manner related to this operator or the particularity of $E$. We chose the following one which will be helpful:
\begin{examp}\label{examp1}
Let $A$ be a non-empty $\mathscr{B}([s,t])$-measurable set and $e$ be a vector in $E$.\\
Assume $\lambda(A)>0$ and let's take $h_1=c_1\mathds{1}_{A}$ where $c_1:=1/\sqrt{\lambda(A)}$ is a normalising constant. Thus,
$$\left\Vert \mathds{1}_{A}\otimes e\right\Vert_{\gamma(s,t;E)}=c_1^{-1}\left\Vert h_1\otimes e\right\Vert_{\gamma(s,t;E)}=\sqrt{\lambda(A)}\left(\mathbb{E}'\left(\left\Vert \gamma'_1e\right\Vert_E^2\right)\right)^{\frac{1}{2}}=\sqrt{\lambda(A)}\left\Vert e\right\Vert_E.$$
Assume $\lambda(A)=0$ and let's take $h_2$ of norm $1$ in $L^2([s,t])$. We have
$$(\mathds{1}_A\otimes e)(g)=\int_{[s,t]}^{Pettis}\mathds{1}_{A}(u)g(u)ed\lambda(u)=\left(\int_{[s,t]}\mathds{1}_{A}(u)g(u)du\right)e=0_E,$$
for every $g\in L^2([s,t])$. Thus $\mathds{1}_A\otimes e=h_2\otimes 0_E$. Consequently
$$\left\Vert \mathds{1}_{A}\otimes e\right\Vert_{\gamma(s,t;E)}=\left\Vert h_2\otimes 0_E\right\Vert_{\gamma(s,t;E)}=\left(\mathbb{E}'\left(\left\Vert \gamma'_20_E\right\Vert_E^2\right)\right)^{\frac{1}{2}}=\sqrt{\lambda(A)}\left\Vert e\right\Vert_E.$$
\end{examp}
The notion which introduced in Definition \ref{def241} is extended as follows:
\begin{defi}\label{def242}
A strongly measurable process $\phi:[s,t]\times \Omega\longrightarrow E$ is said to define a random variable $X:\Omega\longrightarrow \gamma(s,t;E)$ if the following conditions hold:
\begin{eqnarray*}
\left\{
	\begin{array}{lll}
		 (i) & \forall x^*\in E^*,\;<\phi,x^*> \in L^2([s,t])\textrm{ for a.s. }\, \omega
;

	  \\ (ii) & \forall h\in L^2([s,t]),\; \forall x^*\in E^*,\,\big \langle  X(\omega)h,x^*\big \rangle=\displaystyle\int_{[s,t]} h(u)<\phi(u,\omega),x^*> d u\quad  \textrm{ for a.s. }\, \omega.

	\end{array}
	\right.
\end{eqnarray*}
\end{defi}

Here, it should be noted that two strongly measurable processes $\phi_1,\phi_2:[s,t]\times\Omega\rightarrow E$, define the same random variable $X:\Omega\rightarrow\gamma(s,t;E)$ if and only if $\phi_1(u,\omega)=\phi_2(u,\omega)$ for almost all $(u,\omega)\in [s,t]\times\Omega$. Conversely, two strongly measurable random variables $X_1,X_2: \Omega\rightarrow\gamma(s,t;E)$ are defined by the same strongly measurable process $\phi$ if and only if $X_1(\omega) = X_2(\omega)$ for $a.s.\,\omega\in \Omega$.

We will write $\phi \in L^p\left(\Omega;\gamma(s,t;E)\right)$ instead of $X\in L^p\left(\Omega;\gamma(s,t;E)\right)$ and the same for its norm, where $\phi$ is a strongly measurable process defining the random variable $X$.\\

Due to Proposition $2.6$ in \cite{neerven3}, the mapping $\mathcal{I}_{\gamma}:L^p(\Omega;\gamma(s,t;E))\rightarrow\mathcal{L} (L^2([s,t]),L^p(\Omega;E))$ defined by
\begin{eqnarray}\label{ident207}
(\mathcal{I}_{\gamma}(X)h)(\omega):=X(\omega)h,\;\omega\in \Omega,\,h\in L^2([s,t]),
\end{eqnarray}
defines the following isomorphism:
\begin{eqnarray}\label{ident210}
L^p(\Omega;\gamma(s,t;E))\eqsim \gamma (s,t;L^p(\Omega;E)).
\end{eqnarray}

\par Now, let us consider the following mapping:
$$m_{s,t}:\phi\in L^2([0,T])\longmapsto \int_{[s,t]} \phi(u)du.$$
By virtue of Kalton-Weis extension theorem [\cite{neerven2}, Theorem $9.6.1$], the mapping $m_{s,t}\otimes I_E$ which associates any operator $\phi\otimes e$ to $(\int_{[s,t]}\phi(u)du)\otimes e$, has a unique extension to a bounded linear operator $\tilde{m}_{s,t}$ from $\gamma(0,T;E)$ to $E$, with the same norm $\sqrt{t-s}$ as for $m_{s,t}$.

For the sake of readability, we will use the notation $\int _s^t f(u)du$, with a slight abuse of notation, instead of the image $\tilde{m}_{s,t}(\mathscr{I}_f)$, for every function $f$ defining element of $\gamma(0,T;E)$.\\

In the following proposition, we will list some properties satisfied by this integral notation.
\begin{prop}\label{prop245}
Let $\mathfrak{B}$ be a bounded operator on $E$. Let $\alpha$ be a real number, $s'$ be a real number with $s<s'<t$, and $f,g$ be two functions defining elements of $\gamma(0,T;E)$.\\
Then the function $\mathfrak{B}f$ defines an element of $\gamma(0,T;E)$ and we have:
\begin{eqnarray}
\int_s^t \mathfrak{B}f(u)du &=& \mathfrak{B} \int_s^t f(u)du, \label{ident212}\\
\left\Vert \int_s^t \mathfrak{B}f(u)du\right\Vert_E &\leq & \sqrt{t-s}\left\Vert \mathfrak{B}\right\Vert_{\mathcal{L}(E)} \left\Vert f\right\Vert_{\gamma(0,T;E)},\label{ident213}\\
\int_s^t (f+\alpha g)(u)du &=& \int_s^t f(u)du+\alpha \int_s^t g(u)du,\label{ident216}\\
\int_s^t f(u)du &=& \int_s^{s'} f(u)du + \int_{s'}^t f(u)du,\label{ident211}
\end{eqnarray}
where integrals of the functions are images of the associated Pettis integral operators by $\tilde{m}_{.,.}$.
\end{prop}

\begin{proof}
Clearly, $\mathfrak{B}f$ is $\lambda$-strongly measurable. Taking into account the fact that
\begin{eqnarray*}
\forall x^*\in E^*,\quad <\mathfrak{B}f,x^*>=<f,\mathfrak{B}^*x^*>,
\end{eqnarray*}
where $\mathfrak{B}^*$ is the adjoint operator of $\mathfrak{B}$, we infer that $\mathfrak{B}f$ is weakly in $L^2([0,T])$.\\
Let $h\in L^2([0,T])$ and $x^*\in E^*$. By Definition \ref{def210}, we derive 
\begin{eqnarray*}
<\int_{[0,T]}^{Pettis} h(u)\mathfrak{B}f(u)d\lambda(u),x^*>&=&\int_{[0,T]} <h(u)f(u),\mathfrak{B}^*x^*>du\\ &=&<\mathfrak{B}\int_{[0,T]}^{Pettis} h(u)f(u)d\lambda(u),x^*>.
\end{eqnarray*}
Thus $\mathscr{I}_{\mathfrak{B}f}=\mathfrak{B}\mathscr{I}_{f}$. Consequently, it suffices to apply Ideal property [\cite{neerven2}, Theorem $9.6.1$] in order to obtain that $\mathfrak{B}f$ defines an element of $\gamma(0,T;E)$ and the estimate (\ref{ident213}) is valid.

On the other hand, due to Kalton-Weis extension theorem we have:
\begin{eqnarray*}
\tilde{m}_{s,t}\left(\mathscr{I}_{\mathfrak{B}f}\right)=\mathscr{I}_{\mathfrak{B}f}\circ (m_{s,t})^*=\mathfrak{B}\mathscr{I}_f\circ (m_{s,t})^*=\mathfrak{B}\tilde{m}_{s,t}\left(\mathscr{I}_f\right),
\end{eqnarray*}
and by the linearity property we have
\begin{eqnarray*}
\tilde{m}_{s,t}\left(\mathscr{I}_{\alpha f+g}\right)=\tilde{m}_{s,t}\left(\alpha \mathscr{I}_{f}+\mathscr{I}_{g}\right)=\alpha\tilde{m}_{s,t}\left( \mathscr{I}_{f}\right)+\tilde{m}_{s,t}\left(\mathscr{I}_{g}\right).
\end{eqnarray*}
Therefore, (\ref{ident212}) and (\ref{ident216}) are proved.\\
Let $(\mathbb{T}_n)_{n\geq 1}$ be a sequence converging in $\gamma(0,T;E)$ to $\mathscr{I}_f$, where for each $n$, $\mathbb{T}_n=\sum_{j=1}^{k_n}h_j^{n}\otimes e_j^n$. Since
\begin{eqnarray}
\tilde{m}_{s,t}(\mathbb{T}_n)=\sum_{j=1}^{k_n}\int_{[s,t]} h_j^{n}(u)du\otimes e_j^n &=&\sum_{j=1}^{k_n}\left(\int_{[s,s']} h_j^{n}(u)du\otimes e_j^n+\int_{[s',t]} h_j^{n}(u)du\otimes e_j^n\right)\nonumber\\
&=&\tilde{m}_{s,s'}(\mathbb{T}_n)+\tilde{m}_{s',t}(\mathbb{T}_n).\label{ident209}
\end{eqnarray}
Then, by tending $n$ to infinity in (\ref{ident209}) we get (\ref{ident211}).
\end{proof}
The reader may wonder under what conditions the integrals of functions in the sense of Proposition \ref{prop245} will be compared with the Bochner integrals of these functions. For this aim, we use the geometric properties cotype $2$ and type $2$ (\cite{neerven2}, Definition $7.1.1$), in order to state the following proposition.
\begin{prop}
Let $f:[0,T]\longrightarrow E$ be a strongly $\lambda$-measurable mapping. The following assertions hold
\begin{itemize}
\item[(a)]If $E$ has cotype $2$ and $f\in \gamma(0,T;E)$, then $f\in L^2([0,T];E)$ and the two integrals are equal;
\item[(b)]If $E$ has type $2$ and $f\in L^2([0,T];E)$, then $f\in \gamma(0,T;E)$ and the two integrals are equal.
\end{itemize}
\end{prop}
\begin{proof}
$(a)$ Let $(f_n)_{n\geq 1}$ be a sequence from $L^2([0,T])\otimes E$ converging in $\gamma(0,T;E)$ to $f$.
By the geometric property cotype $2$, we use the continuously embedding $\gamma(0,T;E)\hookrightarrow L^2([0,T];E)$ (\cite{neerven2}, Theorem $9.2.11$) in order to derive that $f$ is Bochner $2$-integrable. Besides this, we have for every natural number $n$
\begin{eqnarray}
\left\Vert \int_s^tf(u)du-\int_{[s,t]}^{Bochner}f(u)du\right\Vert_E &\leq & \left\Vert \tilde{m}_{s,t}(\mathscr{I}_f)-\tilde{m}_{s,t}(\mathscr{I}_{f_n})\right\Vert_E+\left\Vert \int_{[s,t]}^{Bochner}(f_n-f)(u)du\right\Vert_E\nonumber\\
&\leq & (t-s)^{\frac{1}{2}}\left\Vert f-f_n\right\Vert_{\gamma(0,T;E)}+T^{\frac{1}{2}}\left\Vert f-f_n\right\Vert_{L^2([0,T];E)}\label{ident340}.
\end{eqnarray}
Thus, the integral $\tilde{m}_{s,t}(\mathscr{I}_{f})$ is only the Bochner integral of $f$ over $[s,t]$.\\
$(b)$ Let $(f_n)_{n\geq 1}$ be a sequence from $L^2([0,T])\otimes E$ converging in $L^2([0,T];E)$ to $f$. By virtue of the geometric property type $2$, we derive from [\cite{neerven2}, Theorem $9.2.10$] that $f\in \gamma(0,T;E)$. Moreover, $(f_n)_{n\geq 1}$ converges to $f$ in $\gamma(0,T;E)$. Thus, from (\ref{ident340}) the second aim is obtained.
\end{proof}
\subsection{$\gamma$-boundedness, Upper contraction property and UMD spaces}
Let $\mathscr{R}:[0,T]\longrightarrow \mathcal{L}(E)$ be an operator-valued function. For the simple case where $\mathscr{R}$ is constant, Proposition \ref{prop245} asserts that $\mathscr{R}$ acts as a "pointwise" multiplier from, the set of all functions defining elements of $\gamma(0,T;E)$, to itself. But, the general case requires more data and concepts and, in general, it is not sufficient that the range of $\mathscr{R}$ is uniformly bounded.\\
Let $(\gamma_n')_{n\geq 1}$ and $(\gamma_n'')_{n\geq 1}$ be sequences of independent standard Gaussian random variables on independent probability spaces $(\Omega',\mathscr{F}',\mathbb{P}')$ and $(\Omega'',\mathscr{F}'',\mathbb{P}'')$ respectively.\\
Let $(\gamma_{nm}''')_{n,m\geq 1}$ be a doubly indexed sequence of independent standard Gaussian random variables on a probability space $(\Omega''',\mathscr{F}''',\mathbb{P}''')$.

\begin{defi}\label{def211}
Let $\mathscr{T}$ be a non-empty set of $\mathcal{L}(E)$. We say that $\mathscr{T}$ is $\gamma$-bounded if there exists a finite constant $C\geq 0$ such that for all finite sequences, $(\mathbb{T}_j)_{1\leq j\leq k}$ in $\mathscr{T}$ and $(e_j)_{1\leq j\leq k}$ in $E$, the following inequality holds:
\begin{eqnarray*}
\mathbb{E}'\left(\left\Vert \sum_{j=1}^k \gamma'_j \mathbb{T}_j e_j\right\Vert_E^2\right)\leq C^2\mathbb{E}'\left(\left\Vert \sum_{j=1}^k \gamma'_j e_j\right\Vert_E^2\right).
\end{eqnarray*}
The least admissible constant in the above inequality is called the $\gamma$-bound of $\mathscr{T}$.
\end{defi}
\noindent Obviously, the $\gamma$-boundedness of $\mathscr{T}$ implies the uniform boundedness of $\mathscr{T}$.
\begin{defi}
The space $E$ is said to have the upper contraction property if for all real numbers $q>1$, there exists a finite constant $C_{q,E}\geq 0$ (depending on $q$ and $E$) such that for all finite sequences $(e_{ij})_{1\leq i\leq l\atop 1\leq j\leq k}$ in $E$, we have:
\begin{eqnarray}\label{ident241}
\mathbb{E}'''\left(\left\Vert \sum_{i=1}^l \sum_{j=1}^k \gamma_{ij}''' e_{ij}\right\Vert_E^q\right)\leq C_{q,E}^q\mathbb{E}'\mathbb{E}''\left(\left\Vert \sum_{i=1}^l \sum_{j=1}^k \gamma'_i\gamma''_je_{ij}\right\Vert_E^q\right).
\end{eqnarray}
\end{defi}
\noindent In particular, Hilbert spaces and Lebesgue spaces $L^q(\mathcal{S,\mu})$, with $1\leq q<\infty$, have the contraction property.
\begin{defi}
The space $E$ is said to be a UMD space if for all real numbers $q>1,$ there is a finite constant $\beta_{q,E}\geq 0$ (depending on $q$ and $E$) such that for all $E$-valued $L^{q}$-martingales $(\mathscr{M}_j)_{1\leq j\leq k}$ on a probability space $(\Omega',\mathscr{F}',\mathbb{P}')$, and for all choice of signs $\epsilon_j\in\left\lbrace-1,+1\right\rbrace,\,1\leq j\leq k,$ one has:
\begin{eqnarray}
\mathbb{E}'\left(\left\Vert \sum_{j=1}^k \epsilon_j (\mathscr{M}_j-\mathscr{M}_{j-1})\right\Vert_E^q\right) \leq \beta_{q,E}^q\mathbb{E}'\left(\left\Vert \sum_{j=1}^k (\mathscr{M}_j-\mathscr{M}_{j-1})\right\Vert_E^q\right),
\end{eqnarray}
where $\mathscr{M}_0=0_E$ by convention.
\end{defi}
\noindent In particular, Hilbert spaces, Lebesgue spaces $L^p(\mathcal{S,\mu})$ and Sobolev spaces $\mathscr{W}^{u,p}$, where $u\in \mathbb{R}$, have UMD property. As well, we mention that the UMD space has two most useful characteristics. On the one hand it is reflexive (see Theorem $4.3.3$ in  \cite{neerven1}), and on the other it does not contain a closed subspace isomorphic to the Banach space $c_0$ of real sequences tending to zero, under the supremum norm (see Proposition $7.3.15$ and Theorem $4.6.10$ in \cite{neerven2,neerven1}). The second characteristic allows us to derive that $\gamma_{\infty}(s,t;E)=\gamma(s,t;E)$ isometrically if $E$ is a UMD space (see Theorem $4.3$ in \cite{survey}).

\subsection{Stochastic integration}
Let $E$ be a UMD space and $\left(S(t)\right)_{0\leq t\leq T}$ be a $C_0$-semigroup on $E$ such that $\mathscr{T}:=\left\lbrace S_t:t\in [0,T]\right\rbrace$ is $\gamma$-bounded with $\gamma$-bound $\gamma(S)$.\\
Let $\mathbb{F}:=\Big(\mathscr{F}_t\big)_{0\leq t\leq T}$ be the augmented filtration generated by the Brownian motion $(W_t)_{t\in [0,T]}$.

\begin{defi}\label{def251}
	\begin{enumerate}
		\item[•]
A mapping $\varphi:[0,T]\times \Omega\longrightarrow E$ is called $\mathbb{F}$-adapted step process if it is of the following form
\begin{eqnarray*}
\varphi(t,\omega)=\mathds{1}_{\left\{0\right\}\times A}(t,\omega)\;\tilde{e}+\sum_{i=1}^n\sum_{j=1}^m\mathds{1}_{(t_{i-1},t_i]\times D_{ji}}(t,\omega)\;e_{ji},
\end{eqnarray*}
where $n,m$ are natural numbers; $0\leq t_0<...<t_n\leq T$; the set $A$ is from $\mathscr{F}_0$; the sets $D_{1i},...,D_{mi}$ are disjoints from $\mathscr{F}_{t_{i-1}}$, and the vectors $\tilde{e},e_{ji}$ are in $E$.
       \item[•]
The stochastic integral of the $\mathbb{F}$-adapted step process $\varphi$ of the previous form, with respect to $W$, is defined for each $s\in [0,T]$ as follows
$$\int_0^s \varphi_u dW_u=\sum_{i=1}^n\sum_{j=1}^m\mathds{1}_{D_{ji}}\left(W_{s\wedge t_i}-W_{s\wedge t_{i-1}}\right)\;e_{ji}.$$
       \item[•]
The Banach space $L_{\mathbb{F}}^p\left(\Omega;\gamma(0,T;E)\right)$ is defined as the closure in $L^p\left(\Omega;\gamma(0,T;E)\right)$ of the $E$-valued $\mathbb{F}$-adapted step processes.
	\end{enumerate}
\end{defi}
\noindent We notice that for every $s,t\in [0,T],\,s<t$, the space $L_{\mathbb{F}}^p\left(\Omega;\gamma(s,t;E)\right)$ will be considered in a similar way as the closure in the space $L^p\left(\Omega;\gamma(s,t;E)\right)$ of the restrictions on $(s,t]\times \Omega$ of $E$-valued $\mathbb{F}$-adapted step processes.\\

The following proposition makes it manifest to acquire the legitimacy of some writings like, integrability of some integrands, or some equivalences and inequalities intervening during the stochastic calculation in the next sections. Let $t_0,t_1$ and $t_2$ be real numbers such that $0\leq t_0<t_1<t_2\leq T$.
\begin{prop}\label{prop251}
Let $f:[t_0,t_2]\times \Omega\longrightarrow E$ be a strongly measurable mapping. If $f$ defines an element of $L_{\mathbb{F}}^p\left(\Omega;\gamma(t_0,t_2;E)\right)$ then the $E$-valued mapping
$$
S(.-t_1)f :\,\, (u,\omega)\in  [t_1,t_2]\times \Omega \mapsto S(u-t_1)f(u,\omega),
$$
is strongly measurable and defines an element of $L_{\mathbb{F}}^p\left(\Omega;\gamma(t_1,t_2;E)\right)$.\\ Furthermore, the integral  $\int_{t_1}^{t_2}S(u-t_1)f(u)du$ belongs to $L^p(\Omega;E)$ and $t\mapsto\int_{t}^{t_2} S(u-t)f(u)du$ defines an element of $L^p(\Omega;\gamma(t_0,t_2;E))$ such that
\begin{eqnarray}
\left\Vert \int _{t_1}^{t_2}S(u-t_1)f(u)du\right\Vert_{L^p(\Omega;E)}&\leq & \sqrt{t_2-t_1}\gamma(S)\left\Vert f\right\Vert_{L_{\mathbb{F}}^p(\Omega;\gamma(t_0,t_2;E))},\label{ident217}\\
\left\Vert t\longmapsto\int _{t}^{t_2}S(u-t) f(u)du\right\Vert_{L^p(\Omega;\gamma(t_0,t_2;E))} &\leq & (t_2-t_0)\gamma(S)\left\Vert f\right\Vert_{L_{\mathbb{F}}^p(\Omega;\gamma(t_0,t_2;E))}.\label{ident218}
\end{eqnarray}
\end{prop}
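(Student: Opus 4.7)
The plan is to reduce both conclusions to two fundamental tools already introduced: the $\gamma$-multiplier theorem applied to the $\gamma$-bounded family $\{S(t):t\in[0,T]\}$, and the Kalton-Weis extension $\tilde m_{s,t}$, which realises the Lebesgue integral as a bounded operator $\gamma(s,t;E)\to E$ of norm $\sqrt{t-s}$ (Proposition \ref{prop245}).

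First I would handle the strong measurability and adaptedness of $S(\cdot-t_1)f$. Strong measurability on $[t_1,t_2]\times\Omega$ is immediate from the strong continuity of $(S(t))$ together with the strong measurability of $f$. Adaptedness is obtained by approximating $f$ in $L_{\mathbb{F}}^p(\Omega;\gamma(t_0,t_2;E))$ by $\mathbb{F}$-adapted step processes $\varphi_n$ as in Definition \ref{def251}; since $S$ is deterministic, $S(\cdot-t_1)\varphi_n$ is again an $\mathbb{F}$-adapted step process on $[t_1,t_2]$, and the limit will be adapted once the continuity bound of the next step is in place.

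For the $\gamma$-radonifying claim on $[t_1,t_2]$, I invoke the $\gamma$-multiplier theorem: since $\{S(u-t_1):u\in[t_1,t_2]\}\subset\mathcal{L}(E)$ is $\gamma$-bounded with bound $\leq\gamma(S)$, pointwise multiplication by $S(\cdot-t_1)$ acts as a bounded operator on $\gamma(t_1,t_2;E)$ with norm at most $\gamma(S)$. Pathwise in $\omega$, combined with the contractivity of restriction from $[t_0,t_2]$ to $[t_1,t_2]$, this yields $\|S(\cdot-t_1)f(\cdot,\omega)\|_{\gamma(t_1,t_2;E)}\leq\gamma(S)\|f(\cdot,\omega)\|_{\gamma(t_0,t_2;E)}$; raising to the $p$-th power and integrating over $\Omega$ gives the membership in $L_{\mathbb{F}}^p(\Omega;\gamma(t_1,t_2;E))$. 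Estimate \eqref{ident217} then drops out by applying $\tilde m_{t_1,t_2}$ (of norm $\sqrt{t_2-t_1}$) to the $\gamma$-radonifying operator just obtained, and invoking \eqref{ident213}.

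The main obstacle is \eqref{ident218}, where the parameter $t$ appears both as the lower limit and inside the semigroup argument, so the function $F(t,\omega):=\int_t^{t_2}S(u-t)f(u,\omega)\,du$ must itself define a $\gamma$-radonifying operator in $t$, not merely be pointwise bounded. The plan is to recognise $F$ pathwise as a Volterra-type operator on $\gamma(t_0,t_2;E)$ obtained by composing the pointwise $\gamma$-multiplier $S(\cdot-t)$ with $\tilde m_{t,t_2}$ and testing in the outer variable $t$. Approximating $f$ by the adapted step processes $\varphi_n$, on which the double integral reduces to a finite sum amenable to explicit Gaussian computation, I expect two factors of $\sqrt{t_2-t_0}$ (one from the $\gamma$-norm in $t$, one from $\|\tilde m_{t,t_2}\|\le\sqrt{t_2-t_0}$) together with the multiplier constant $\gamma(S)$, producing exactly the factor $(t_2-t_0)\gamma(S)$. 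Passing to the limit through the $\gamma$-Fubini identification \eqref{ident210} and combining with the bound from the previous paragraph then delivers \eqref{ident218}.
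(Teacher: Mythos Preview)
Your proposal is correct and follows essentially the same route as the paper: strong measurability via strong continuity of the semigroup, the pathwise Kalton--Weis $\gamma$-multiplier theorem for the $\gamma$-bound, approximation by $\mathbb{F}$-adapted step processes to recover adaptedness, and the extension $\tilde m_{t_1,t_2}$ for \eqref{ident217}. The only substantive difference is in \eqref{ident218}: the paper simply invokes Lemma~2.6 of \cite{BSEE.B} for the bound $\|t\mapsto\int_t^{t_2}S(u-t)f(u)\,du\|_{\gamma}\le(t_2-t_0)\gamma(S)\|f\|_{\gamma}$, whereas you sketch a direct argument via step-process approximation and the $\gamma$-Fubini isomorphism; your heuristic of ``two factors of $\sqrt{t_2-t_0}$'' is the right intuition, but be aware that turning it into a rigorous bound requires exactly the computation carried out in that external lemma.
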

\begin{proof}
Let $e$ be a fixed vector in $E$, and consider for each natural number $m$ the following simple function
\begin{eqnarray*}
h^m=\sum_{j=0}^{2^m-1} \mathds{1}_{\left\lbrace t_1+\frac{j}{2^m}(t_2-t_1)\leq u< t_1+\frac{j+1}{2^m}(t_2-t_1)\right\rbrace} S\Big(\frac{j}{2^m}(t_2-t_1)\Big)e+\mathds{1}_{\left\lbrace t_2\right\rbrace}S(t_2-t_1)e.
\end{eqnarray*}
From the strong continuity of $(S(u-t_1))_{u\in [t_1,t_2]}$, it's easy to see that for any $v\in [t_1,t_2[$:
$$\lim_{m\rightarrow \infty}h^m(v)= S(v-t_1)e.$$
 Thus, the application $u\in [t_1,t_2]\longmapsto S(u-t_1)e$ is strongly measurable. Therefore, $S(.-t_1)f$ is also strongly measurable by taking into account the uniformly boundedness of $(S(t-t_1))_{t\in [t_1,t_2]}$.
 
On the other hand, there exists a negligible set $\mathscr{N}$ from $\mathscr{F}_0$ such that for each $\omega$ from $(\Omega\setminus\mathscr{N})$, the mapping $f(.,\omega)$ defines an element of $\gamma(t_0,t_2;E)$ according to Lemma $2.7$ in  \cite{neerven3}.
\\Let $\omega$ be a fixed element of $(\Omega\setminus\mathscr{N})$.
Once more, according to the strong continuity of $(S(t-t_1))_{t\in [t_1,t_2]}$ we have $S(.-t_1):[t_1,t_2]\longrightarrow\mathcal{L}(E)$ is a strong measurable mapping (in the sense of Definition 8.5.1 in \cite{neerven2}). Thus, by applying Kalton-Weis multiplier theorem [\cite{neerven2}, Theorem $9.5.1$], we infer that
$$ u\in [t_1,t_2]\longmapsto S(u-t_1)f(u,\omega),$$
 belongs to $\gamma_{\infty}(t_1,t_2;E)$ and
  $$\left\Vert S(.-t_1)f(.,\omega)\right\Vert_{\gamma(t_1,t_2;E)}\leq \gamma(S)\left\Vert f(.,\omega)\right\Vert_{\gamma(t_0,t_2;E)}.$$
Let $(g^n)_{n\geq 1}$ be a sequence of $E$-valued $\mathbb{F}$-adapted step processes defined on $[t_0,t_2]\times \Omega$ and converging to $f$ in $L^p(\Omega;\gamma(t_0,t_2;E))$. From Kalton-Weis multiplier theorem and Proposition \ref{prop245}, we have
\begin{eqnarray}
\left\Vert S(.-t_1)f(.,\omega)-S(.-t_1)g^n(.,\omega)\right\Vert_{\gamma(t_1,t_2;E)}&\leq & \gamma(S)\left\Vert f(.,\omega)-g^n(.,\omega)\right\Vert_{\gamma(t_0,t_2;E)},\label{ident214}\\
\left\Vert \int _{t_1}^{t_2}S(u-t_1)\Big((f-g^n)(u,\omega)\Big)du\right\Vert_E &\leq & \sqrt{t_2-t_1}\gamma(S)\left\Vert f(.,\omega)-g^n(.,\omega)\right\Vert_{\gamma(t_0,t_2;E)},\qquad \label{ident215}
\end{eqnarray}
for a.s.$\omega$. We start from (\ref{ident214}) in order to obtain that $S(.-t_1)f$ defines an element of $L^p\left(\Omega;\gamma(t_1,t_2;E)\right)$, and since $S(.-t_1)g^n$ is a $\mathbb{F}$-adapted process then $S(.-t_1)f$ defines an element of $L_{\mathbb{F}}^p\left(\Omega;\gamma(t_1,t_2;E)\right)$ according to [\cite{neerven3}, Propositions $2.11-2.12$]. Moreover, tanks to the inequalities (\ref{ident215}) we infer that $\int _{t_1}^{t_2}S(u-t_1)f(u)du$ is $\mathbb{P}$-strongly measurable and belongs to $L^p(\Omega;E)$ with the estimate (\ref{ident217}).\\
Further, by take a look at Lemma $2.6$ in \cite{BSEE.B}, we derive for a.s.$\omega$ the following inequalities:
\begin{eqnarray*}
\left\Vert t\longmapsto\int _{t}^{t_2}S(u-t)\Big((f-g^n)(u,\omega)\Big)du\right\Vert_{\gamma(t_0,t_2;E)} &\leq & (t_2-t_0)\gamma(S)\left\Vert f(.,\omega)-g^n(.,\omega)\right\Vert_{\gamma(t_0,t_2;E)}.
\end{eqnarray*}
Therefore, $t\longmapsto \int_{t}^{t_2}S(u-t)f(u)du$ belongs to $L^p(\Omega;\gamma(t_0,t_2;E))$ and the estimate (\ref{ident218}) holds.
\end{proof}
\begin{defi}
A strongly measurable and adapted process $\varphi:[0,T]\times \Omega\longrightarrow E$ is said to be $L^p$-stochastically integrable with respect to $W$ if the following conditions are fulfilled
\begin{itemize}
\item[1)]For all $x^*\in E^*$ we have $<\varphi,x^*>\in L^p(\Omega;L^2([0,T]))$.
\item[2)]There exists a sequence of\; $\mathbb{F}$-adapted step processes $\varphi^n:[0,T]\times \Omega\longrightarrow E$ such that:
\begin{itemize}
\item[i)]$(\varphi^n)_{n\geq 1}$ converges in measure on $[0,T]\times\Omega$ to $\varphi$,
\item[ii)]there exists a strongly measurable random variable $\mathbb{I}\in L^p(\Omega;E)$ such that
$$\lim_{n\rightarrow \infty}\int_0^T\varphi_u^ndW_u=\mathbb{I}\textrm{ in }L^p(\Omega;E).$$
\end{itemize}
\end{itemize}
In this case, $\mathbb{I}$ is called the stochastic integral of $\varphi$ with respect to $W$, notation $\int_0^T\varphi_udW_u:=\mathbb{I}$.
\end{defi}
\par Due to Theorem $3.6$ in \cite{neerven3}, a strongly measurable and adapted $E$-valued mapping $\varphi$ is $L^p$-stochastically integrable if and only if it defines an element of $L^p\left(\Omega;\gamma(0,T;E)\right)$. Besides this, if a process is $L^p$-stochastically integrable, its restriction on $[0,t_1]\times \Omega,$ still  $L^p$-stochastically integrable.
\begin{thm}[It\^o isomorphism]\label{thm251}
For any strongly measurable and adapted process $\varphi\in L^p\left(\Omega;\gamma(0,T;E)\right)$ we have the following estimates
$$\mathbb{E}\left(\left\Vert \int_0^T\varphi dW\right\Vert^p\right)\eqsim_p\mathbb{E}\left(\sup_{t\in [0,T]}\left\Vert \int_0^t\varphi dW\right\Vert^p\right)\eqsim_{p,E}\left\Vert \varphi\right\Vert_{L_{\mathbb{F}}^p\left(\Omega;\gamma(0,T;E)\right)}^p.$$
\end{thm}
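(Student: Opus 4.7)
The plan is to establish both equivalences first for the dense subclass of $\mathbb{F}$-adapted step processes and then extend by density, exploiting that $L_{\mathbb{F}}^p(\Omega;\gamma(0,T;E))$ was defined as the closure of such processes. For a step process $\varphi=\sum_{i=1}^n \mathds{1}_{(t_{i-1},t_i]}v_i$ with each $v_i:\Omega\to E$ taking values in a finite-dimensional subspace and being $\mathscr{F}_{t_{i-1}}$-measurable (we may absorb the disjoint partitions $D_{ji}$ into a single $v_i$), the stochastic integral from Definition~\ref{def251} reads $\int_0^T\varphi\,dW=\sum_{i=1}^n v_i(W_{t_i}-W_{t_{i-1}})$, and for fixed $\omega$ the operator $\varphi(\cdot,\omega)$ sits in $\gamma(0,T;E)$ with $\gamma$-norm $\bigl(\mathbb{E}_\gamma\bigl\|\sum_i\sqrt{t_i-t_{i-1}}\gamma_i v_i(\omega)\bigr\|^2\bigr)^{1/2}$ after orthonormalising the indicator basis (Remark~\ref{rk241}). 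Raising to the $p$-th power and integrating against $\mathbb{P}$ gives a concrete form of the right-hand quantity.

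The first equivalence follows at once from Doob's $L^p$-maximal inequality, valid for $p>1$, applied to the $E$-valued martingale $M_t:=\int_0^t\varphi\,dW$; this yields a constant depending only on $p$. For the second equivalence, the core step is UMD decoupling: since the $v_i$ are progressively measurable, the increments $\Delta_i W:=W_{t_i}-W_{t_{i-1}}$ are only conditionally Gaussian, so I would invoke the Garling--McConnell characterisation of UMD spaces, which gives independent copies $\tilde{\gamma}_i$ of standard Gaussians, defined on an enlarged probability space and independent of the $v_i$, such that
\[
\mathbb{E}\Bigl\|\sum_{i=1}^n v_i\Delta_iW\Bigr\|^p\eqsim_{p,E}\mathbb{E}\,\tilde{\mathbb{E}}\Bigl\|\sum_{i=1}^n \sqrt{t_i-t_{i-1}}\,\tilde{\gamma}_i v_i\Bigr\|^p.
\]
Once the Gaussians are decoupled from $(v_i)$, Fubini together with the Kahane--Khintchine inequalities (already used to obtain the isomorphism~\eqref{ident210}) allows me to pass from the $p$-th to the $2$nd moment in the inner Gaussian integral, producing precisely $\|\varphi\|_{L_{\mathbb{F}}^p(\Omega;\gamma(0,T;E))}^p$.

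To lift the estimates to a general $\varphi\in L_{\mathbb{F}}^p(\Omega;\gamma(0,T;E))$, pick a sequence of $\mathbb{F}$-adapted step processes $(\varphi_n)$ approximating $\varphi$ in that space; the inequalities just proved show that $(\int_0^\cdot \varphi_n\,dW)_n$ is Cauchy in the Banach space $L^p(\Omega;C([0,T];E))$, and its limit defines $\int_0^\cdot\varphi\,dW$. Passing to the limit in both equivalences preserves the constants (which only involve $p$ and the UMD constant of $E$), yielding the stated two-sided bounds. The principal difficulty is the decoupling step: it is precisely the place where the UMD hypothesis is essential, since without it the transition from adapted Brownian increments to independent Gaussians fails in $L^p$; once decoupling is granted the rest of the argument is a bookkeeping exercise combining Kahane--Khintchine, Fubini and Doob.
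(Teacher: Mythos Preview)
The paper does not actually prove Theorem~\ref{thm251}; it is stated in Section~\ref{sec:2} as a preliminary result quoted from the van Neerven--Veraar--Weis theory (references \cite{neerven3} and \cite{N.V.L}), and immediately after the statement the paper moves on to Remark~\ref{rk261} without giving any argument. So there is no ``paper's own proof'' to compare against.

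That said, your outline is essentially the standard proof found in \cite{neerven3}: Doob's $L^p$-inequality for the first equivalence, UMD decoupling of the Brownian increments for the second, Kahane--Khintchine to switch moments, and extension by density from step processes. The sketch is correct in spirit; the only point where a referee might press you is the decoupling step, where one should be explicit that the Garling--McConnell result applies to tangent martingale difference sequences and that the sequence $(v_i\Delta_iW)$ together with its decoupled copy $(v_i\sqrt{t_i-t_{i-1}}\,\tilde\gamma_i)$ indeed form such a pair with respect to the filtration generated by the $v_i$ and the independent Gaussians. Once that is spelled out, the argument is complete and coincides with the literature proof.
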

\begin{rk}\label{rk261}
\begin{itemize}
\item[(a)]The UMD property is necessary in Theorem \ref{thm251} in the sense that it is implied by the validity of the statement in the theorem. For more details see \cite{neerven3}.
\item[(b)]Let $\varphi$ be a strongly measurable and adapted process defining element of $L_{\mathbb{F}}^p\left(\Omega;\gamma(0,t_2;E)\right)$. By virtue of Theorem \ref{thm251} and Kalton-Weis multiplier theorem, we have
\begin{eqnarray*}
\left\Vert \int_{t_1}^{t_2} S(u-t_1)\varphi_u dW_u\right\Vert_{L^p(\Omega;E)} &\eqsim_{p,E}&\left\Vert S(.-t_1)\varphi\right\Vert_{L_{\mathbb{F}}^p\left(\Omega;\gamma(t_1,t_2;E)\right)}\\
&\leq &\gamma(S)\left\Vert \varphi\right\Vert_{L_{\mathbb{F}}^p\left(\Omega;\gamma(0,t_2;E)\right)}.
\end{eqnarray*}
\end{itemize}
\end{rk}

The remaining proposition plays an important role in proving the existence of a solution $(Y, Z)$ of the BSEI (\ref{BSDI3}). It enables to give an explicit formula determining the process $Z$.
\begin{prop}[\cite{BSEE.B}, Lemma $3.5$]\label{prop252}
If $g:[t_1,t_2]\times \Omega\longrightarrow E$ is a strongly measurable mapping defining element in $ L_{\mathbb{F}}^p\left(\Omega;\gamma(t_1,t_2;E)\right)$, then there exists a unique $\tau\in L_{\mathbb{F}}^p\left(\Omega;\gamma(t_1,t_2;\gamma(t_1,t_2;E)\right))$ such that:
\begin{itemize}
\item[(i)]
Almost surely, $\tau$ is supported on the set $\left\lbrace (u,s)\in [t_1,t_2]\times [t_1,t_2]:s\leq u\right\rbrace$.
\item[(ii)]
For almost all $v\in[t_1,t_2]$ we have
$$g(v)=\mathbb{E}(g(v))+\int_{t_1}^v \tau(v,s)dW_s\quad \textrm{ in }L^p(\Omega;E).$$
\item[(iii)]
$\left\Vert \tau\right\Vert_{L^p\left(\Omega;\gamma(t_1,t_2;\gamma(t_1,t_2;E)\right))}\lesssim_{p,E} \left\Vert g\right\Vert_{L^p\left(\Omega;\gamma(t_1,t_2;E)\right)}.$
\end{itemize}
\end{prop}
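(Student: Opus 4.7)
The plan is to carry out a pointwise-in-$v$ Brownian martingale representation and then assemble the pieces using the $\gamma$-Fubini isomorphism \eqref{ident210} together with the It\^o isomorphism of Theorem~\ref{thm251}.

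First, via the identification \eqref{ident210}, I interpret $g$ as giving, for a.e. $v\in[t_1,t_2]$, an $E$-valued random variable $g(v)\in L^p(\Omega;E)$ which is $\mathscr{F}_v$-measurable thanks to the adaptedness built into $L_{\mathbb{F}}^p$. For each such $v$ I invoke the Brownian martingale representation theorem for $L^p$-martingales with values in the UMD space $E$: the centered random variable $g(v)-\mathbb{E}(g(v))$ admits a unique representation
\begin{equation*}
g(v)-\mathbb{E}(g(v)) \;=\; \int_{t_1}^{t_2} \tau(v,s)\,dW_s,
\end{equation*}
with $\tau(v,\cdot)\in L_{\mathbb{F}}^p(\Omega;\gamma(t_1,t_2;E))$. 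Since $g(v)$ is $\mathscr{F}_v$-measurable, the representing martingale $(\mathbb{E}[g(v)\mid\mathscr{F}_t])_{t\in[t_1,t_2]}$ is constant on $[v,t_2]$, so uniqueness forces $\tau(v,s)=0$ for $s>v$, which is property (i). Property (ii) is then just a restatement of this representation in $L^p(\Omega;E)$.

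Second, the It\^o isomorphism yields, pointwise in $v$,
\begin{equation*}
\|\tau(v,\cdot)\|_{L^p(\Omega;\gamma(t_1,t_2;E))}^{p} \;\eqsim_{p,E}\; \|g(v)-\mathbb{E}(g(v))\|_{L^p(\Omega;E)}^{p} \;\lesssim_{p}\; \|g(v)\|_{L^p(\Omega;E)}^{p}.
\end{equation*}
Applying the $\gamma$-Fubini identification \eqref{ident210} twice --- once to read the inner $\gamma$-norm as an $L^p(\Omega;\cdot)$ slice, once to reassemble the outer $\gamma$-norm in the variable $v$ --- this integrates up to the estimate (iii).

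The main obstacle is the joint regularity of $\tau$: one needs to verify that the family $\{\tau(v,\cdot)\}_v$ obtained slice by slice actually assembles into a process defining an element of the iterated space $L_{\mathbb{F}}^p(\Omega;\gamma(t_1,t_2;\gamma(t_1,t_2;E)))$, rather than a mere collection of pointwise representatives. I would handle this by first verifying the claim on a dense class of elementary integrands of the form $g=\mathbf{1}_{(a,b]}\otimes\eta$ with $\eta\in L^p(\Omega,\mathscr{F}_T;E)$, where the representation reduces essentially to the scalar case and $\tau$ can be written down explicitly. The isomorphism and norm estimate of the second step then let one extend the assignment $g\mapsto\tau$ by density to all of $L_{\mathbb{F}}^p(\Omega;\gamma(t_1,t_2;E))$, and the limiting $\tau$ automatically inherits the required measurability and support properties in the iterated $\gamma$-space.
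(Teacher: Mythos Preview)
The paper does not give its own proof of this proposition: it is quoted as a result from L\"u and van Neerven \cite{BSEE.B} (see the sentence immediately preceding the statement), so there is nothing in the present paper to compare your argument against.

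That said, your sketch is the standard route taken in \cite{BSEE.B}: one applies the $E$-valued Brownian martingale representation theorem for $L^p$-martingales to each slice $g(v)$, reads off the support condition from $\mathscr{F}_v$-measurability, controls the $\gamma$-norm of $\tau(v,\cdot)$ via the It\^o isomorphism of Theorem~\ref{thm251}, and then uses the $\gamma$-Fubini identification \eqref{ident210} together with a density argument on $\mathbb{F}$-adapted step processes to pass from the slice-wise data to a genuine element of the iterated space $L_{\mathbb{F}}^p(\Omega;\gamma(t_1,t_2;\gamma(t_1,t_2;E)))$. The only point where your write-up is slightly loose is the claim that ``$g(v)$ is $\mathscr{F}_v$-measurable thanks to the adaptedness built into $L_{\mathbb{F}}^p$'': since $L_{\mathbb{F}}^p$ is defined as a closure, this requires the observation (implicit in \cite{neerven3}) that an element of $L_{\mathbb{F}}^p(\Omega;\gamma(t_1,t_2;E))$ represented by a strongly measurable function admits an adapted modification; in practice the density argument you already invoke handles this automatically, since for step processes the adaptedness is explicit and the limiting procedure preserves the support property~(i).
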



\section{Backward Stochastic Evolution Inclusions in UMD Spaces}
Let $(\Omega, \mathscr{F}, \mathbb{F}, \mathbb{P})$ be a stochastic basis on which is defined a Brownian motion $(W_t)_{0\leq t\leq T}$ such that $\mathscr{F}=\mathscr{F}_T$ and $\mathbb{F}:=\Big(\mathscr{F}_t\big)_{t\in [0,T]}$ is the augmented natural filtration of $(W_t)_{0\leq t\leq T}$.

\subsection{Measurability of set-valued functions}
Before introducing the problem assigned to this section, we start by defining the measurability of set-valued functions, after that we state a lemma which will be useful to confirm, under suitable conditions, the measurability of some set-valued functions. It should be noted that there are several definitions in various situations, and  the logical relations among the various definitions have been worked out (see  \cite{himme,set-stochastic-ch2} for more details). We  deal with the following definition.
\begin{defi}
Let $F$ be a separable Banach space and $(\mathcal{S},\mathscr{A})$ be a measurable space.\\
Let $H:\mathcal{S}\longrightarrow \mathscr{K}_{cmpt}\left( F\right)$ be a set-valued function.
  \begin{enumerate}
  \item[•]
$H$ is called $\mathscr{A}$-measurable if for any closed set $B$ of $F$ we have:
\begin{eqnarray*}
\left\lbrace s\in \mathcal{S} :\,H(s)\cap B\neq \emptyset\right\rbrace \in \mathscr{A}.
\end{eqnarray*}
  \item[•]
For $\mathcal{S}=[0,T]\times \Omega$, we say that $H$ is adapted if for any closed set $B$ of $F$ we have:
\begin{eqnarray*}
\forall u\in [0,T],\;\left\lbrace \omega\in\Omega :\,H(u,\omega)\cap B\neq \emptyset\right\rbrace \in \mathscr{F}_t.
\end{eqnarray*}
  \end{enumerate}
\end{defi}
\begin{lem}\label{lem401}
Let $\mathscr{P}$ be a $\sigma$-algebra on $[0,T]\times \Omega$ and let $F,\tilde{F}$ be two separable Banach spaces.\\Let $\Pi:[0,T]\times \Omega\times \tilde{F}\rightarrow\mathscr{K}_{cmpt}(F)$ be a $\mathscr{P}\otimes \mathscr{B}(\tilde{F})$-measurable set-valued.
\\If $f:[0,T]\times\Omega\rightarrow \tilde{F}$ is a $\mathscr{P}$-measurable mapping, then the following set-valued
$$\begin{array}{clcl}
  H : &[0,T]\times\Omega  &\longrightarrow  &\mathscr{K}_{cmpt}(F)\\
           &(u,\omega)          &\longmapsto      &\Pi(u,\omega,f(u,\omega))\\
\end{array}$$
is $\mathscr{P}$-measurable.
\end{lem}
\begin{proof}
According to Castaing representation theorem [\cite{intermeas}, Theorem $6.6.8$], there exists a sequence $\left(h_n\right)_{n\geq 1}$ of $\mathscr{P}\otimes\mathscr{B}(\tilde{F})$-measurable selections of $\Pi$ such that:
\begin{eqnarray*}
\Pi\left(u,\omega,\tilde{x}\right)=cl_{F}\left(\bigcup_{m=1}^\infty\left\lbrace h_m\left(u,\omega,\tilde{x}\right)\right\rbrace\right),
\end{eqnarray*}
for any $\left(u,\omega,\tilde{x}\right)\in [0,T]\times \Omega\times \tilde{F}$, where $cl_F$ denotes the closure in $F$.\\
Thus, for any $\left(u,\omega\right)$ we have:
\begin{eqnarray*}
H\left(u,\omega\right)=cl_{F}\left(\bigcup_{m=1}^\infty\left\lbrace h_m\left(u,\omega,f(u,\omega)\right)\right\rbrace\right).
\end{eqnarray*}
Thanks to the properties of measurable functions, we infer that $(u,\omega)\longmapsto h_m(u,\omega,f(u,\omega))$ is $\mathscr{P}$-measurable mapping for each $m\geq 1$. It remains only to apply once more Castaing representation theorem in order to conclude the proof.
\end{proof}
\subsection{Main result}
Let $E$ be a separable UMD space. Let $A$ be the generator of a $C_0$-semigroup $\left\lbrace S_t\right\rbrace_{t\geq 0}$ on $E$.\\

Let us now take up our main topic, the study of the following backward stochastic evolution inclusion
\begin{eqnarray}\label{E1}
BSEI(1):	\begin{cases}
	dY_u+AY_udu\in G(u,Y_u,Z_u)du+Z_udW_u,\quad u\in [0,T]\\
	Y_T=\xi,
\end{cases}
\end{eqnarray}
where $G:\left[0,T\right]\times \Omega\times E\times E\rightarrow \mathscr{K}_{cmpt}\left(E\right)$ is a $\mathscr{B}([0,T])\otimes\mathscr{F}\otimes \mathscr{B}(E)\otimes \mathscr{B}(E)$-measurable set-valued.
\\Following common practice, we suppress the dependence on the underlying probability space from the notation of the set-valued $G$.

For each $\mathscr{B}([0,T])\otimes \mathscr{F}$-measurable processes $Y,Z$ defining elements of $L_{\mathbb{F}}^p\left(\Omega;\gamma\left(0, T;E\right)\right)$, let us consider the following set :
\begin{eqnarray*}
\Lambda^{Y,Z}=\Lambda_{0, T}^{Y,Z}&:=&\left\lbrace g\in L_{\mathbb{F}}^p\left(\Omega;\gamma\left(0, T;E\right)\right):g\in G(.,Y,Z)\,a.e.(u,\omega)\in [0, T]\times \Omega\right\rbrace.
\end{eqnarray*}

For every real number $\delta,\,0<\delta<T,$ and  each $\mathscr{B}([T-\delta,T])\otimes \mathscr{F}$-measurable processes $Y,Z$ defining elements of the space $L_{\mathbb{F}}^p\left(\Omega;\gamma\left(T-\delta, T;E\right)\right)$ we consider the following set
\begin{eqnarray}
	\Lambda_{T-\delta, T}^{Y,Z}&:=&\left\lbrace g\in L_{\mathbb{F}}^p\left(\Omega;\gamma\left(T-\delta, T;E\right)\right):g\in G(.,Y,Z)\,a.e.(u,\omega)\in [T-\delta, T]\times \Omega\right\rbrace.\label{ident301}
\end{eqnarray}
The set $\Lambda_{T-\delta, T}^{Y,Z}$ is considered as a subset of $L_{\mathbb{F}}^p\left(\Omega;\gamma\left(T-\delta, T;E\right)\right)$. It is made up of (equivalence classes of) random variables each of them is an element of $L_{\mathbb{F}}^p\left(\Omega;\gamma\left(T-\delta, T;E\right)\right)$ and can be defined by a such $E$-valued $\mathscr{B}([T-\delta,T])\otimes \mathscr{F}$-measurable process $g$ defined on $[T-\delta, T]\times \Omega$ such that $g(u,\omega)$ belongs to $G(u,Y(u,\omega),Z(u,\omega))$ for almost everywhere $(u,\omega)$ from $[T-\delta,T]\times\Omega$.\bigskip\\
Consider the following assumptions:
\begin{itemize}
\item[$(H_1)$]
$\xi$ belongs to $L^p\left(\Omega,\mathscr{F}_T,\mathbb{P};E\right)$;
\item[$(H_2)$]
The set $\left\lbrace  S_t\right\rbrace_{t\in [0,T]}$ is $\gamma$-bounded i.e. the set of operators $\mathscr{T}:= \{S_t, \,\, t\in [0,T] \}$ is $\gamma$-bounded in the sense of Definition \ref{def211};
\item[$(H_3)$]
The set-valued $G$ has the following properties:
\begin{itemize}
\item[$(i)$]
$\forall\,Y,Z\in L_{\mathbb{F}}^p\left(\Omega;\gamma\left(0,T;E\right)\right),\; \Lambda^{Y,Z}\,\textrm{is a non-empty set};$
\item[$(ii)$]
$\forall\,Y,Z,Y',Z'\in L_{\mathbb{F}}^p\left(\Omega;\gamma\left(0,T;E\right)\right)$,
$$\rho(\Lambda^{Y,Z},\Lambda^{Y',Z'})\leq L\left(\left\Vert Y- Y'\right\Vert_{L_{\mathbb{F}}^p\left(\Omega;\gamma\left(0,T;E\right)\right)}+\left\Vert Z- Z'\right\Vert_{L_{\mathbb{F}}^p\left(\Omega;\gamma\left(0,T;E\right)\right)}\right);$$
\end{itemize}
where $\rho(\Lambda^{Y,Z},\Lambda^{Y',Z'})$ denotes the Hausdorff distance of $\Lambda^{Y,Z}$ and $\Lambda^{Y',Z'}$, and is defined similarly to (\ref{ident100}) taking into account the different spaces.
\end{itemize}
We put at our disposal another hypothesis:
\begin{itemize}
\item[$(H_4)$] The set-valued $G$ has the following property: $\forall\,Y,Z\in L_{\mathbb{F}}^p\left(\Omega;\gamma\left(0,T;E\right)\right),\; \Lambda^{Y,Z}\,\textrm{is a closed set}.$
\end{itemize}
\begin{lem}\label{lem301}
The following assertions hold:
\begin{itemize}
\item[1)]	If $(H_3)$ is satisfied then the set-valued $G$ has also the following properties:
	\begin{itemize}
		\item[$(a)$]
		$\forall s,t\in [0,T]$, $s<t$, $\forall\,Y,Z\in L_{\mathbb{F}}^p\left(\Omega;\gamma\left(s,t;E\right)\right),$
		\begin{eqnarray*}
		\Lambda_{s,t}^{Y,Z}&:=&\left\lbrace g\in L_{\mathbb{F}}^p\left(\Omega;\gamma\left(s, t;E\right)\right):g  \in G(.,Y,Z)\,a.e.(u,\omega)\in [s, t]\times \Omega\right\rbrace\neq \emptyset;
		\end{eqnarray*}
		\item[$(b)$]
		$\forall s,t\in [0,T]$, $s<t$, $\forall\,Y,Z,Y',Z'\in L_{\mathbb{F}}^p\left(\Omega;\gamma\left(s,t;E\right)\right)$,
		$$\rho(\Lambda_{s,t}^{Y,Z},\Lambda_{s,t}^{Y',Z'})\leq L\left(\left\Vert Y- Y'\right\Vert_{L_{\mathbb{F}}^p\left(\Omega;\gamma\left(s,t;E\right)\right)}+\left\Vert Z- Z'\right\Vert_{L_{\mathbb{F}}^p\left(\Omega;\gamma\left(s,t;E\right)\right)}\right).$$
	\end{itemize}
\item[2)]  If the hypothesis $(H_4)$ is fulfilled then
\begin{eqnarray*}
\forall s,t\in [0,T],\, s<t,\, \forall\,Y,Z\in L_{\mathbb{F}}^p\left(\Omega;\gamma\left(s,t;E\right)\right),\;\Lambda_{s,t}^{Y,Z}\textrm{ is closed too.}
\end{eqnarray*}
\end{itemize}
\end{lem}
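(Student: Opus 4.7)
The approach is to localize hypotheses $(H_3)$ and $(H_4)$ from the full interval $[0,T]$ to an arbitrary subinterval $[t',t'']$ through a zero-extension / restriction argument. For any $\varphi \in L_{\mathbb{F}}^p(\Omega;\gamma(t',t'';E))$, let $\tilde\varphi$ denote its extension by zero to $[0,T]\times\Omega$. The Pettis integral operator of $\tilde\varphi$ factors as $\mathscr{I}_{\tilde\varphi} = \mathscr{I}_\varphi \circ P$, where $P : L^2([0,T]) \to L^2([t',t''])$ is the restriction map of operator norm one, while simultaneously $\mathscr{I}_\varphi = \mathscr{I}_{\tilde\varphi} \circ J$ for the isometric zero-extension $J : L^2([t',t''])\hookrightarrow L^2([0,T])$. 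Applying the ideal property of $\gamma$-radonifying operators (as used in Proposition~\ref{prop245}) in both directions yields the isometric identity $\|\tilde\varphi\|_{\gamma(0,T;E)} = \|\varphi\|_{\gamma(t',t'';E)}$; likewise, restricting any element of $L_{\mathbb{F}}^p(\Omega;\gamma(0,T;E))$ to $[t',t'']$ can only decrease the $\gamma$-norm. Strong measurability and $\mathbb{F}$-adaptedness pass through both operations trivially, so zero-extension is an isometric embedding of $L_{\mathbb{F}}^p(\Omega;\gamma(t',t'';E))$ into $L_{\mathbb{F}}^p(\Omega;\gamma(0,T;E))$ with a contractive retraction by restriction.

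Granted this tool, $(j)$ follows by applying $(H_3)(i)$ to the zero-extensions $\tilde Y, \tilde Z$ to produce $\tilde g \in \Lambda^{\tilde Y, \tilde Z}$ and then restricting: since $\tilde Y = Y$, $\tilde Z = Z$ on $[t',t'']\times\Omega$, the restriction $g:=\tilde g|_{[t',t'']}$ lies in $\Lambda_{t',t''}^{Y,Z}$. For $(jj)$, fix $\varepsilon > 0$ and $g \in \Lambda_{t',t''}^{Y,Z}$, and use $(H_3)(i)$ on the null processes to select some $g_0 \in \Lambda^{0,0}$. Define
$$\tilde g := g\,\mathds{1}_{[t',t'']} + g_0\,\mathds{1}_{[0,T]\setminus [t',t'']},$$
which is measurable, adapted, lies in $L_{\mathbb{F}}^p(\Omega;\gamma(0,T;E))$, and selects from $G(\cdot,\tilde Y,\tilde Z)$ a.e., so that $\tilde g \in \Lambda^{\tilde Y,\tilde Z}$. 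By $(H_3)(ii)$ and the definition of Hausdorff distance there exists $\tilde g' \in \Lambda^{\tilde Y',\tilde Z'}$ with
$$\|\tilde g - \tilde g'\|_{\gamma(0,T;E)} \leq L\left(\|Y - Y'\|_{\gamma(t',t'';E)} + \|Z - Z'\|_{\gamma(t',t'';E)}\right) + \varepsilon,$$
using the zero-extension isometry on the differences $\tilde Y-\tilde Y'$ and $\tilde Z-\tilde Z'$. The restriction $g' := \tilde g'|_{[t',t'']}$ lies in $\Lambda_{t',t''}^{Y',Z'}$ and the restriction contraction gives $\|g-g'\|_{\gamma(t',t'';E)} \leq \|\tilde g - \tilde g'\|_{\gamma(0,T;E)}$. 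Taking $\sup$ over $g$ and $\inf$ over $g'$, exchanging the roles of $(Y,Z)$ and $(Y',Z')$, and letting $\varepsilon\to 0$ yields the claimed Hausdorff bound.

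Closedness under $(H_4)$ follows by the same scheme: given $g_n \to g$ in $L_{\mathbb{F}}^p(\Omega;\gamma(t',t'';E))$ with each $g_n \in \Lambda_{t',t''}^{Y,Z}$, extend every $g_n$ and $g$ by the same fixed $g_0 \in \Lambda^{0,0}$, so that the extensions coincide off $[t',t'']$. The isometry gives $\|\tilde g_n - \tilde g\|_{\gamma(0,T;E)} = \|g_n - g\|_{\gamma(t',t'';E)} \to 0$, and since each $\tilde g_n \in \Lambda^{\tilde Y,\tilde Z}$, hypothesis $(H_4)$ forces $\tilde g \in \Lambda^{\tilde Y,\tilde Z}$, whence $g = \tilde g|_{[t',t'']} \in \Lambda_{t',t''}^{Y,Z}$. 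The one place where genuine care is required is the isometric behavior of zero-extension for $\gamma$-radonifying functions, which is a consequence of the ideal property; once this is secured, the rest of the lemma is a mechanical pull-back of the global hypotheses to local subintervals.
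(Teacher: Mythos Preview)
Your proof is correct and follows essentially the same zero-extension/restriction strategy as the paper: extend $Y,Z$ by zero and $g$ by a fixed selection of $G(\cdot,0_E,0_E)$ to lift everything to $[0,T]$, apply the global hypotheses $(H_3)$--$(H_4)$, then restrict back using the $\gamma$-norm isometry/contraction. The paper's own argument is much terser---for $(j)$ and $(jj)$ it merely cites Examples~9.1.11--9.1.12 of \cite{neerven2} (which encode exactly the ideal-property isometry you spell out), and for the closedness part it writes down the same extensions $\tilde Y=\mathds{1}_{[t',t'']}Y$, $\tilde g^n=\mathds{1}_{[t',t'']}g^n+\mathds{1}_{[0,T]\setminus[t',t'']}h$ with $h\in\Lambda^{0,0}$ that you use---so your write-up is simply a more explicit rendering of the same proof.
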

\begin{proof}
Let $s,t$ be two real numbers from $[0,T]$ such that $s<t$.
\\Let $\mathcal{Y},\mathcal{Z}$ be two $\mathscr{B}([0,T])\otimes \mathscr{F}$-measurable processes defining elements of $L_{\mathbb{F}}^p\left(\Omega;\gamma\left(0,T;E\right)\right)$.\\
Let $h$ be a $\mathscr{B}([0,T])\otimes \mathscr{F}$-measurable process defining an element of $\Lambda^{\mathcal{Y},\mathcal{Z}}$.
\\Taking into account Remarks \ref{rk241}-$(b)$, we derive for a.s. $\omega$ that $$\left\Vert h(.,\omega)_{\mid [s,t]}\right\Vert_{\gamma\left(s,t;E\right)}\leq \left\Vert h(.,\omega)\right\Vert_{\gamma\left(0,T;E\right)}$$
then the restriction $h_{\mid s,t}$ of $h$ on $[s,t]\times \Omega$ defines an element of $L_{\mathbb{F}}^p\left(\Omega;\gamma\left(s,t;E\right)\right)$. Besides this $h_{\mid s,t}$ belongs to $G(.,\mathcal{Y},\mathcal{Z})$ for a.e. $(u,\omega)$ from $[s,t]\times \Omega$, therefore $h_{\mid s,t}$ defines an element of $\Lambda_{s,t}^{\mathcal{Y}_{\mid s,t},\mathcal{Z}_{\mid s,t}}$.\bigskip\\
\noindent $1)$ 
Let $Y,Z$ be two $\mathscr{B}([s,t])\otimes \mathscr{F}$-measurable processes defining elements of $L_{\mathbb{F}}^p\left(\Omega;\gamma\left(s,t;E\right)\right)$. Let $(Y^n)_{n\geq 1}$ be a sequence of adapted step processes converging to $Y$ with respect to the strong topology of $L_{\mathbb{F}}^p\left(\Omega;\gamma\left(s,t;E\right)\right)$. Define the $E$-valued mappings $\tilde{Y},\;\tilde{Z}$ and the $\mathbb{F}$-adapted step processes $\tilde{Y}^n$ on $[0,T]\times \Omega$ as follows:
\begin{eqnarray*}
\tilde{Y}= \mathds{1}_{[s,t]}Y,\quad \tilde{Z}=\mathds{1}_{[s,t]} Z,\quad \tilde{Y}^n=\mathds{1}_{[s,t]}Y^n.
\end{eqnarray*} 
Taking into account Remarks \ref{rk241}-$(b)$, we derive for every natural number $n$ that:
\begin{eqnarray}
\int_{\Omega}\left\Vert Y(.,\omega)\right\Vert_{\gamma\left(s,t;E\right)}^p \mathbb{P}(d\omega)&=&\int_{\Omega}\left\Vert \tilde{Y}(.,\omega)\right\Vert_{\gamma\left(0,T;E\right)}^p \mathbb{P}(d\omega)\label{ident313}\\
\int_{\Omega}\left\Vert Y(.,\omega)-Y^n(.,\omega)\right\Vert_{\gamma\left(s,t;E\right)}^p \mathbb{P}(d\omega)&=&\int_{\Omega}\left\Vert \tilde{Y}(.,\omega)-\tilde{Y}^n(.,\omega)\right\Vert_{\gamma\left(0,T;E\right)}^p \mathbb{P}(d\omega).\label{ident314}
\end{eqnarray}
Next, from (\ref{ident313}) we infer that the process $\tilde{Y}$ defines an element of $L^p\left(\Omega;\gamma\left(0,T;E\right)\right)$, and due to (\ref{ident314}) we derive that $(\tilde{Y}^n)_{n\geq 1}$ converges to $\tilde{Y}$ in $L^p\left(\Omega;\gamma\left(0,T;E\right)\right)$. Thus $\tilde{Y}$ defines an element of $L_{\mathbb{F}}^p\left(\Omega;\gamma\left(0,T;E\right)\right)$, and $\tilde{Z}$ so does similarly.\\
Since $\Lambda^{\tilde{Y},\tilde{Z}}$ is non-empty, then $\Lambda_{s,t}^{\tilde{Y}_{\mid s,t},\tilde{Z}_{\mid s,t}}$ is non-empty too, and $\Lambda_{s,t}^{Y,Z}$ so is.\\
Let $Y',Z'$ be two $\mathscr{B}([s,t])\otimes \mathscr{F}$- measurable processes defining elements of $L_{\mathbb{F}}^p\left(\Omega;\gamma\left(s,t;E\right)\right)$, let $g$ be a $\mathscr{B}([s,t])\otimes \mathscr{F}$-measurable process defining element of $\Lambda_{s,t}^{Y,Z}$, and define the following processes:
\begin{eqnarray*}
\tilde{Y'}=\mathds{1}_{[s,t]}Y',\quad \tilde{Z'}=\mathds{1}_{[s,t]}Z',\quad \tilde{g}=\mathds{1}_{[s,t]}g.
\end{eqnarray*}
We have:
\begin{eqnarray*}
d_{L_{\mathbb{F}}^p\left(\Omega;\gamma\left(0,T;E\right)\right)}(\tilde{g},\Lambda^{\tilde{Y'},\tilde{Z'}})\geq \inf_{h\in \Lambda^{\tilde{Y'},\tilde{Z'}} }\left\Vert (\tilde{g}-h)_{\mid s,t} \right\Vert_{L_{\mathbb{F}}^p\left(\Omega;\gamma\left(s,t;E\right)\right)},
\end{eqnarray*}
where $d_{L_{\mathbb{F}}^p\left(\Omega;\gamma\left(0,T;E\right)\right)}(\tilde{g},\Lambda^{\tilde{Y'},\tilde{Z'}})$ denotes the distance between the element of $L_{\mathbb{F}}^p\left(\Omega;\gamma\left(0,T;E\right)\right)$ defined by $\tilde{g}$ and the subset $\Lambda^{\tilde{Y'},\tilde{Z'}}$. Thus,
\begin{eqnarray*}
d_{L_{\mathbb{F}}^p\left(\Omega;\gamma\left(s,t;E\right)\right)}(g,\Lambda_{s,t}^{Y',Z'})&\leq & L\left(\left\Vert \tilde{Y}- \tilde{Y'}\right\Vert_{L_{\mathbb{F}}^p\left(\Omega;\gamma\left(0,T;E\right)\right)}+\left\Vert \tilde{Z}- \tilde{Z'}\right\Vert_{L_{\mathbb{F}}^p\left(\Omega;\gamma\left(0,T;E\right)\right)}\right)\\
&=& L\left(\left\Vert Y- Y'\right\Vert_{L_{\mathbb{F}}^p\left(\Omega;\gamma\left(s,t;E\right)\right)}+\left\Vert Z- Z'\right\Vert_{L_{\mathbb{F}}^p\left(\Omega;\gamma\left(s,t;E\right)\right)}\right).
\end{eqnarray*}
Therefore the properties $(a)$ and $(b)$ are proved.\\
\noindent $2)$ To prove the second claim, let $(g^n)_{n\geq 1}$ be a Cauchy sequence in $\Lambda_{s,t}^{Y,Z}$ and let $\tilde{h}$ be in $\Lambda_{0,T}^{0_E,0_E}$.
\\Next, let's take:
$$\tilde{Y}=\mathds{1}_{[s,t]}Y,\quad \tilde{Z}=\mathds{1}_{[s,t]}Z,\quad \tilde{g}^n=\mathds{1}_{[s,t]}g^n+\mathds{1}_{([0,T]\setminus[s,t])}\tilde{h}.$$
Since $(\tilde{g}^n)_{n\geq 1}$ is a Cauchy sequence in the closed set $\Lambda_{0,T}^{\tilde{Y},\tilde{Z}}$, therefore it converges to $\tilde{g}\in \Lambda_{0,T}^{\tilde{Y},\tilde{Z}}$. Consequently, $(g^n)_{n\geq 1}$ converges to the restriction of $\tilde{g}$ on $[s,t]\times \Omega$ which belongs to $\Lambda_{s,t}^{Y,Z}$.
\end{proof}
\begin{rk}
Assume that $(H_3)$ and $(H_4)$ are fulfilled. Let $s,t$ be two real numbers such that $0\leq s<t\leq T$ and let $Y,Z\in L_{\mathbb{F}}^p(\Omega;\gamma(s,t;E))$. Let $(Y^n)_{n\geq 1}$ and $(Z^n)_{n\geq 1}$ be two sequences of adapted step processes converging respectively to $Y$ and $Z$ in $L_{\mathbb{F}}^p(\Omega;\gamma(s,t;E))$.\\
By virtue of $(H_4)$ we derive the closedness of the sets $\Lambda^{Y,Z}_{s,t}$, $\Lambda^{Y^n,Z^n}_{s,t}$ in $L_{\mathbb{F}}^p(\Omega;\gamma(s,t;E))$, and by $(H_3)$ the sequence $(\rho(\Lambda^{Y,Z}_{s,t},\Lambda^{Y^n,Z^n}_{s,t}))_{n\geq 1}$ converges to $0$. Then, due to Proposition $1.1.3$ in \cite{papa} we have:
\begin{eqnarray*}
\Lambda_{s,t}^{Y,Z}=\bigcap_{n\geq 1}\textrm{\large{cl}}_{L_{\mathbb{F}}^p(\Omega;\gamma(s,t;E))}(\bigcup_{m\geq n}\Lambda_{s,t}^{Y^m,Z^m}),
\end{eqnarray*}
where $\textrm{\large{cl}}_{L_{\mathbb{F}}^p(\Omega;\gamma(s,t;E))}$ denotes the closure in $L_{\mathbb{F}}^p(\Omega;\gamma(s,t;E))$.
\end{rk}

 Let us now introduce the definition of our BSEI.
\begin{defi}\label{def301}
A pair $\left(Y,Z\right)$ of $\mathscr{B}([0,T])\otimes \mathscr{F}$-measurable and $\mathbb{F}$-adapted processes is called a mild $L^p$-solution for the inclusion (\ref{E1}) if it satisfies the following properties:
\begin{itemize}
\item[(a)]
$Y,Z\in L_{\mathbb{F}}^p\left(\Omega;\gamma\left(0,T;E\right)\right)$;
\item[(b)]
$Y$ belongs to $C([0,T];L^p\left(\Omega;E\right))$;
\item[(c)]
There exists a $\mathscr{B}([0,T])\otimes \mathscr{F}$-measurable process $g:[0,T]\times \Omega\longrightarrow E$ such that $g\in \Lambda^{Y,Z}$ and for any $t\in [0,T]$:
\begin{eqnarray}\label{ident303}
Y_t+\int_t^T S(u-t) g_udu+\int_t^T S(u-t) Z(u)dW_u= S(T-t)\xi \;\textrm{ in }\;L^p\left(\Omega;E\right).
\end{eqnarray}
\end{itemize}
\end{defi}

\noindent It should be noted that the integral as a second term in the left-hand side of the equation (\ref{ident303}) is in the sense of Proposition \ref{prop245}.\\
\bigskip

The first main theorem of this section is the following.
\begin{thm}\label{thm301}
Let $(H_1)-(H_4)$ be satisfied and assume in addition that $E$ has the upper contraction property. Then, the backward stochastic evolution inclusion (\ref{E1}) admits a mild $L^p$-solution $(Y,Z)$.
\end{thm}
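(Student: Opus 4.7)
I would follow the roadmap already sketched in the introduction: construct a mild $L^p$-solution locally on a short terminal interval $[T-\delta,T]$ by a Picard-type iteration in the Banach space $L_{\mathbb{F}}^p(\Omega;\gamma(T-\delta,T;E))$, and then patch finitely many local solutions together by a concatenation procedure to cover the whole interval $[0,T]$. Fix $\delta\in(0,T)$ (to be made small in the contraction step below), initialize with $Y^0=Z^0\equiv 0$ and choose some $g^0\in\Lambda_{T-\delta,T}^{0,0}$, which is non-empty by Lemma~\ref{lem301}$(j)$.

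\textbf{Construction of the iterates.} Suppose $(Y^{n-1},Z^{n-1},g^{n-1})$ already defined. The selection step is to pick $g^n\in\Lambda_{T-\delta,T}^{Y^{n-1},Z^{n-1}}$ satisfying
\[
\|g^n-g^{n-1}\|_{L_{\mathbb{F}}^p(\Omega;\gamma(T-\delta,T;E))}\leq \rho\bigl(\Lambda_{T-\delta,T}^{Y^{n-2},Z^{n-2}},\Lambda_{T-\delta,T}^{Y^{n-1},Z^{n-1}}\bigr)+\varepsilon_{n-1},
\]
which is possible because $g^{n-1}\in\Lambda_{T-\delta,T}^{Y^{n-2},Z^{n-2}}$ and by definition of the Hausdorff distance the outer infimum in the sup-inf formula can be approached to within any prescribed error $\varepsilon_{n-1}>0$. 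Given $g^n$, I obtain $(Y^n,Z^n)$ from the linear inhomogeneous backward evolution equation: apply Proposition~\ref{prop252} to the $L^p$-martingale $v\mapsto \mathbb{E}\bigl[S(T-v)\xi-\int_v^T S(u-v)g^n_u\,du\mid\mathscr{F}_v\bigr]$ to recover a predictable kernel playing the role of $Z^n$, and define $Y^n$ so that identity~(ii) in (\ref{Picard}) holds. Propositions~\ref{prop251} and~\ref{prop252} ensure that $Y^n,Z^n\in L_{\mathbb{F}}^p(\Omega;\gamma(T-\delta,T;E))$.

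\textbf{Contraction on $[T-\delta,T]$.} Combining the selection bound with the local Lipschitz property from Lemma~\ref{lem301}$(jj)$ (a consequence of $(H_3)(ii)$) yields
\[
\|g^n-g^{n-1}\|\leq L\bigl(\|Y^{n-1}-Y^{n-2}\|+\|Z^{n-1}-Z^{n-2}\|\bigr)+\varepsilon_{n-1}.
\]
On the other hand, subtracting the defining identities for $(Y^n,Z^n)$ and $(Y^{n-1},Z^{n-1})$ and applying the estimates (\ref{ident217})--(\ref{ident218}) of Proposition~\ref{prop251}, the It\^o isomorphism (Theorem~\ref{thm251}), Remark~\ref{rk261} and Proposition~\ref{prop252} (this is where the upper contraction property of $E$ enters, to pass from a control of $Y^n-Y^{n-1}$ in $L^p(\Omega;E)$ to a control of the associated kernel $Z^n-Z^{n-1}$), one gets an estimate of the form
\[
\|Y^n-Y^{n-1}\|+\|Z^n-Z^{n-1}\|\leq C(E,p)\,\gamma(S)\bigl(\delta+\sqrt{\delta}\bigr)\,\|g^n-g^{n-1}\|.
\]
Choosing $\delta$ so small that $C(E,p)\gamma(S)L(\delta+\sqrt{\delta})<\tfrac12$, and fixing $\varepsilon_n:=2^{-n}$, an elementary induction turns the two inequalities into a geometric bound on $\|g^n-g^{n-1}\|$, so that $(Y^n)$, $(Z^n)$ and $(g^n)$ are Cauchy in $L_{\mathbb{F}}^p(\Omega;\gamma(T-\delta,T;E))$.

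\textbf{Passage to the limit and concatenation.} Let $(Y,Z,g)$ denote the common limits. Hypothesis $(H_4)$ together with Lemma~\ref{lem301} gives closedness of $\Lambda_{T-\delta,T}^{Y,Z}$, hence $g\in\Lambda_{T-\delta,T}^{Y,Z}$. Passing to the limit in the $L^p(\Omega;E)$-identity $Y^n_t+\int_t^T S(u-t)g^n_u\,du+\int_t^T S(u-t)Z^n_u\,dW_u=S(T-t)\xi$ using the same ingredients (Proposition~\ref{prop251}, Theorem~\ref{thm251}, Remark~\ref{rk261}) produces a local mild $L^p$-solution on $[T-\delta,T]$, and the continuity $Y\in C([T-\delta,T];L^p(\Omega;E))$ follows from strong continuity of $S$ applied to the right-hand side. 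The solution is then extended to $[0,T]$ by iterating the same argument on $[T-2\delta,T-\delta],[T-3\delta,T-2\delta],\dots$ with terminal conditions $Y_{T-k\delta}\in L^p(\Omega,\mathscr{F}_{T-k\delta};E)$; since $\delta$ depends only on $E$, $p$, $L$ and $\gamma(S)$, a finite number of steps suffices. The main obstacle I anticipate is precisely this contraction step: unlike the classical single-valued case one cannot invoke uniqueness of the Picard iterate, and the Hausdorff-distance selection of $g^n$ is an infinite-dimensional analogue of Nadler's construction, so one must verify that the constant in front of $\|g^n-g^{n-1}\|$ arising from inversion of the linear BSEE is controlled uniformly in $n$ (and across the concatenation intervals) by a constant depending only on $E$, $p$ and $\gamma(S)$.
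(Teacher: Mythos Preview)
Your proposal is correct and follows essentially the same route as the paper: Picard iteration on a short terminal interval with Hausdorff-distance selections of $g^n$, linear BSEE solution $(Y^n,Z^n)$ from \cite{BSEE.B}, geometric contraction for small $\delta$, and finite concatenation. Two small clarifications: the upper contraction property is invoked in the paper not in the contraction estimate itself but earlier, to guarantee (via \cite{BSEE.B}) that the linear equation in (\ref{Picard})(ii) has a unique solution $(Y^n,Z^n)$, with the key bound on $Z^{n+1}-Z^n$ coming from applying Proposition~\ref{prop252} to $g^{n+1}-g^n$ and the explicit formula $Z_u^{n+1}-Z_u^n=\int_u^T S(s-u)(\tau^{n+1}-\tau^n)(s,u)\,ds$; and in the limit step you need one extra line, namely $d\bigl(g^n,\Lambda_{T-\delta,T}^{Y,Z}\bigr)\leq \rho\bigl(\Lambda_{T-\delta,T}^{Y^{n-1},Z^{n-1}},\Lambda_{T-\delta,T}^{Y,Z}\bigr)\to 0$ by $(H_3)(ii)$, after which closedness from $(H_4)$ gives $g\in\Lambda_{T-\delta,T}^{Y,Z}$.
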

\begin{proof}
We shall use the successive approximation method, called the Picard iteration method, to construct a mild $L^p$-solution to the BSEI (\ref{E1}). For this purpose, we will divide the proof into four steps.


\noindent {\textbf{First step:}} In the beginning, we will show that the sets defined by (\ref{ident301}) intervene by iteratively applying the characterization of the lower bound property in order to obtain sequences allowing to approximate the different factors involved in the resolution of the BSEI (\ref{E1}). These sets are non-empty whatever the parameters that make up its determination according to Lemma \ref{lem301}.
\par Let $0<\delta \leq T$ (which will be chosen later) and  $(\varepsilon_n)_{n\in \mathbb{N}}$ be a sequence of strictly positive real numbers.\\
We initialize by taking the three processes defined on $[T-\delta, T]\times\Omega$ as follows:
$$Y^0=g^0=Z^0= 0_E.$$
It yields that we can construct by induction sequences $\left(Y^n\right)_{n\in \N },\;\left(Z^n\right)_{n\in \N}$ of $E$-valued measurable and adapted processes, and $\left(g^n\right)_{n\in \N}$ of $E$-valued measurable processes such that for any natural number $n$ and for any real number $t$ from $[T-\delta, T]$ we have
\begin{eqnarray}\label{eq1}
\left\{
	\begin{array}{lll}
		 (i) &
g^n \text{ defines an element from } \Lambda_{T-\delta, T}^{Y^{n-1},Z^{n-1}} \text{ such that }

	\\ &
	$$
	\left\Vert g^{n}-g^{n-1}\right\Vert_{L_{\mathbb{F}}^p\left(\Omega;\gamma\left(T-\delta, T;E\right)\right)}\leq \rho\left( \Lambda_{T-\delta, T}^{Y^{n-1},Z^{n-1}},\Lambda_{T-\delta, T}^{Y^{n-2},Z^{n-2}}\right)+\varepsilon_{n-1} ;
	$$
\\ (ii) & 	Y^n\; (resp.\, Z^n)\text{ defines element of the space  }L_{\mathbb{F}}^p\left(\Omega;\gamma\left(T-\delta,T;E\right)\right) \text{ such that}  \\ &
	   Y_t^{n}+\displaystyle\int_t^{T} S(u-t) g_u^n du+\int_t^{T} S(u-t) Z_u^{n}dW_u=S(T-t)\xi\; \textrm{ in }L^p\left(\Omega;E\right).

	\end{array}
	\right.
\end{eqnarray}

By taking a look at Proposition \ref{prop251} one can see obviously that the second and third terms on the left-hand side of each equation in $(ii)$ are well-defined as elements of $L^p(\Omega;E)$.

Now, we will show that the sequences of the processes previously announced are sufficient to lead us to at least one mild $L^p$-solution of the stochastic evolution inclusion studied. The proof will be divided into four steps.


\noindent {\textbf{Second step:}}
The claim in this step is that for $\delta$ small enough,  the sequences $\left(Y^n\right)_{n\in \N}$, $\left(Z^n\right)_{n\in \N}$ converge in $L_{\mathbb{F}}^p\left(\Omega;\gamma\left( T-\delta, T;E\right)\right)$ to limits $Y$ and $Z$ respectively.\\
For every $n$, we start from the following formulas :
\begin{eqnarray}
Y_t^{n+1} +\int_t^{T} S(u-t)g_u^{n+1} du +\int_t^{T} S(u-t)Z_u^{n+1}dW_u &=& S(T-t)\xi\label{ident316}\\
Y_t^n+\int_t^{T} S(u-t)g_u^ndu +\int_t^{T} S(u-t)Z_u^{n}dW_u &=& S(T-t)\xi\label{ident317}\\
Y_{T}^{n+1}=Y_{T}^n &=&\xi.\nonumber
\end{eqnarray}
Subtracting (\ref{ident316}) and (\ref{ident317}) we get
\begin{eqnarray}
Y_t^{n+1}-Y_t^n +\int_t^{T} S(u-t)\left(g_u^{n+1}-g_u^n\right)du +\int_t^{T} S(u-t)\left(Z_u^{n+1}-Z_u^{n}\right)dW_u = 0_{L^p(\Omega;E)}. \label{ident318}
\end{eqnarray}
According to Proposition \ref{prop252}, we infer that there exists $\tau^{\mathtt{i}}\in L_{\mathbb{F}}^p\left(\Omega;\gamma\left({T-\delta},T;\gamma\left({T-\delta},T;E\right)\right)\right)$ for each $\mathtt{i}=n,n+1$, such that for almost all $u\in [{T-\delta},T]$ we have in $L^p(\Omega;E)$ the following equalities
\begin{eqnarray*}
g_u^\mathtt{i} &=&\mathbb{E}\left(g_u^\mathtt{i}\right)+\int_{T-\delta}^u \tau^\mathtt{i}(u,s)dW_s,\,\mathtt{i}=n,n+1.
\end{eqnarray*}
It follows, for almost all $u\in [{T-\delta},T]$ we have:
\begin{eqnarray*}
g_u^{n+1}-g_u^n=\mathbb{E}\left(g_u^{n+1}-g_u^n\right)+\int_{T-\delta}^u \left(\tau^{n+1}(u,s)-\tau^n(u,s)\right)dW_s,
\end{eqnarray*}
in $L^p(\Omega;E)$. Thus, once more due to Proposition \ref{prop252} we derive that
\begin{eqnarray}\label{ident322}
\left\Vert \tau^{n+1}-\tau^n\right\Vert_{L^p\left(\Omega;\gamma({T-\delta},T;\gamma({T-\delta},T;E)\right))}\lesssim_{p,E} \left\Vert g^{n+1}-g^n\right\Vert_{L^p\left(\Omega;\gamma({T-\delta},T;E)\right)}.
\end{eqnarray}
On the other hand, by uniqueness of the mild $L^p$-solution $\left(Y^{n+1}-Y^n ,Z^{n+1}-Z^{n}\right)$ one can see that
\begin{eqnarray*}
Z_u^{n+1}-Z_u^{n}=\int_u^{T} S(s-u)\left(\tau^{n+1}(s,u)-\tau^n(s,u)\right)ds.
\end{eqnarray*}
Thus, according to \cite{BSEE.B} (Lemma $2.6$, Lemma $2.9$) and thanks to inequality (\ref{ident322}) we have successively
\begin{eqnarray}\label{estimates}
\left\Vert Z^{n+1}-Z^n\right\Vert_{L_{\mathbb{F}}^p\left(\Omega;\gamma\left({T-\delta},T;E\right)\right)} &\leq & \delta^{\frac{1}{2}}\gamma(S)\left\Vert \tau^{n+1}- \tau^n\right\Vert_{L^p\left(\Omega;\gamma\left(\Gamma_{\delta};E\right)\right)}\nonumber\\
&\lesssim_{p,E}& \delta^{\frac{1}{2}}\gamma(S)\left\Vert \tau^{n+1}- \tau^n\right\Vert_{L_{\mathbb{F}}^p\left(\Omega;\gamma\left({T-\delta},T;\gamma\left({T-\delta},T;E\right)\right)\right)}\nonumber\\
&\lesssim_{p,E} & \delta^{\frac{1}{2}}\gamma(S)\left\Vert g^{n+1}-g^n\right\Vert_{L_{\mathbb{F}}^p\left(\Omega;\gamma\left({T-\delta},T;E\right)\right)},
\end{eqnarray}
where $\gamma(S)$ is the $\gamma$-bound of $\mathscr{T}$ and $\Gamma_{\delta}:=\left\lbrace (u,v)\in [{T-\delta},T]\times [{T-\delta},T]: {T-\delta}<v\leq u<T \right\rbrace$.\\Consequently, by using (\ref{eq1}) we have
\begin{eqnarray}
& & \left\Vert Z^{n+1}-Z^n\right\Vert_{L_{\mathbb{F}}^p\left(\Omega;\gamma\left({T-\delta},T;E\right)\right)}\label{ident324}\\
&\lesssim_{p,E} & \delta^{\frac{1}{2}}\gamma(S)\left(L \left\Vert Y^n -Y^{n-1}\right\Vert_{L_{\mathbb{F}}^p\left(\Omega;\gamma\left({T-\delta},T;E\right)\right)}+L \left\Vert Z^n- Z^{n-1}\right\Vert_{L_{\mathbb{F}}^p\left(\Omega;\gamma\left({T-\delta},T;E\right)\right)}+\varepsilon_n\right).\nonumber
\end{eqnarray}
Due to Proposition \ref{prop245} and the isomorphism (\ref{ident210}) we infer that
\begin{eqnarray}
& &\left\Vert t\longmapsto \int_t^{T} S(u-t)\left( Z_u^{n+1}-Z_u^{n}\right)dW_u\right\Vert_{L^p\left(\Omega;\gamma\left({T-\delta},T;E\right)\right)}\label{ident325}\\
&=&\left\Vert t\longmapsto \int_t^{T} S(u-t)\left( \int_u^{T} S(s-u)\left(\tau^{n+1}(s,u)-\tau^n(s,u)\right)ds\right)dW_u\right\Vert_{L^p\left(\Omega;\gamma\left({T-\delta},T;E\right)\right)}\nonumber\\
&=&\left\Vert t\longmapsto \int_t^{T} \int_u^{T} S(s-t)\left(\tau^{n+1}(s,u)-\tau^n(s,u)\right)ds dW_u\right\Vert_{L^p\left(\Omega;\gamma\left({T-\delta},T;E\right)\right)}\nonumber\\
&\eqsim_p&\left\Vert t\longmapsto \int_t^{T} \int_u^{T} S(s-t)\left(\tau^{n+1}(s,u)-\tau^n(s,u)\right)ds dW_u\right\Vert_{\gamma({T-\delta},T;L^p(\Omega;E))}=:(M_2).\nonumber
\end{eqnarray}
Taking into account Theorem \ref{thm251}, Proposition $\ref{prop245}$ and the isomorphism (\ref{ident210}) we derive that
 \begin{eqnarray}
(M_2)&\eqsim_{p,E}&\left\Vert t\longmapsto \left[u\longmapsto \mathds{1}_{t\leq u}\int_u^{T} S(s-t)\Delta{\tau}^{n}(s,u)ds \right]\right\Vert_{\gamma({T-\delta},T;L^p\left(\Omega;\gamma\left({T-\delta},T;E\right)\right))}\label{ident326}\\
&=&\left\Vert t\longmapsto \left[u\longmapsto \mathds{1}_{t\leq u}\;S(u-t)\int_u^{T} S(s-u)\Delta{\tau}^{n}(s,u)ds \right]\right\Vert_{\gamma({T-\delta},T;L^p\left(\Omega;\gamma\left({T-\delta},T;E\right)\right))}\nonumber\\
&\eqsim_p&\left\Vert t\longmapsto \left[u\longmapsto \mathds{1}_{t\leq u}\;S(u-t)\int_u^{T} S(s-u)\Delta{\tau}^{n}(s,u)ds \right]\right\Vert_{L^p\left(\Omega;\gamma({T-\delta},T;\gamma\left({T-\delta},T;E\right)\right))}\nonumber\\
&=:&(M_3)\nonumber
\end{eqnarray}
where  $\Delta{\tau}^{n}:=\tau^{n+1}-\tau^{n}$.
\\According to both, Kalton-Weis multiplier theorem and Example $(\ref{examp1})$, one can see that:
\begin{eqnarray}
(M_3)&\leq & \gamma(S)\left\Vert t\longmapsto \left[u\longmapsto \int_u^{T} S(s-u)\left(\tau^{n+1}(s,u)-\tau^n(s,u)\right)ds \right]\right\Vert_{L^p\left(\Omega;\gamma({T-\delta},T;\gamma\left({T-\delta},T;E\right)\right))}\label{ident327}\\
&=& \delta^{\frac{1}{2}}\gamma(S)\left\Vert Z^{n+1}-Z^n \right\Vert_{L_{\mathbb{F}}^p\left(\Omega;\gamma\left({T-\delta},T;E\right)\right)}.\nonumber
\end{eqnarray}
 It follows, by combining (\ref{ident325}), (\ref{ident326}) and (\ref{ident327})  with (\ref{ident324}), that
\begin{eqnarray}
& &\left\Vert t\longmapsto \int_t^{T} S(u-t)\left( Z_u^{n+1}-Z_u^{n}\right)dW_u\right\Vert_{L^p\left(\Omega;\gamma\left({T-\delta},T;E\right)\right)}\label{ident328}\\
&\lesssim_{p,E} &  \delta\gamma(S)^2\left(L\left\Vert Y^n -Y^{n-1}\right\Vert_{L_{\mathbb{F}}^p\left(\Omega;\gamma\left({T-\delta},T;E\right)\right)}+L\left\Vert Z^n- Z^{n-1}\right\Vert_{L_{\mathbb{F}}^p\left(\Omega;\gamma\left({T-\delta},T;E\right)\right)}+\varepsilon_n\right).\nonumber
\end{eqnarray}
On the other hand, it turns out from Proposition \ref{prop251} that:
\begin{eqnarray}
& &\left\Vert t\longmapsto \int_t^{T} S(u-t)\left(g_u^{n+1}- g_u^n\right)du\right\Vert_{L^p\left(\Omega;\gamma\left({T-\delta},T;E\right)\right)}\nonumber\\
& \leq & \delta \gamma(S)\left\Vert g^{n+1}- g^n\right\Vert_{L^p\left(\Omega;\gamma\left({T-\delta},T;E\right)\right)}\nonumber\\
& \leq & \delta \gamma(S)\left(L\left\Vert Y^n -Y^{n-1}\right\Vert_{L_{\mathbb{F}}^p\left(\Omega;\gamma\left({T-\delta},T;E\right)\right)}+L\left\Vert Z^n- Z^{n-1}\right\Vert_{L_{\mathbb{F}}^p\left(\Omega;\gamma\left({T-\delta},T;E\right)\right)}+\varepsilon_n\right).\label{ident329}
\end{eqnarray}
Therefore, taking into account (\ref{ident328}) and (\ref{ident329}) we derive from (\ref{ident318}) that:
\begin{eqnarray}
& &\left\Vert Y^{n+1}-Y^n\right\Vert_{L_{\mathbb{F}}^p\left(\Omega;\gamma\left({T-\delta},T;E\right)\right)}\label{ident330}\\
&\lesssim_{p,E}& \delta \gamma(S)(1+\gamma(S))\left(L\left\Vert Y^n -Y^{n-1}\right\Vert_{L_{\mathbb{F}}^p\left(\Omega;\gamma\left({T-\delta},T;E\right)\right)}+L\left\Vert Z^n- Z^{n-1}\right\Vert_{L_{\mathbb{F}}^p\left(\Omega;\gamma\left({T-\delta},T;E\right)\right)}+\varepsilon_n\right).\nonumber
\end{eqnarray}
Summing the two inequalities (\ref{ident324}) and (\ref{ident330}), member to member, to lead us to the following
\begin{eqnarray}
& &\left\Vert Y^{n+1}-Y^n\right\Vert_{L_{\mathbb{F}}^p\left(\Omega;\gamma\left({T-\delta},T;E\right)\right)}+\left\Vert Z^{n+1}-Z^n\right\Vert_{L_{\mathbb{F}}^p\left(\Omega;\gamma\left({T-\delta},T;E\right)\right)}\label{ident331}\\
&\leq &\beta\,\, \delta^{\frac{1}{2}}\left(\left\Vert Y^n -Y^{n-1}\right\Vert_{L_{\mathbb{F}}^p\left(\Omega;\gamma\left({T-\delta},T;E\right)\right)}+\left\Vert Z^n- Z^{n-1}\right\Vert_{L_{\mathbb{F}}^p\left(\Omega;\gamma\left({T-\delta},T;E\right)\right)}+\dfrac{\varepsilon_n}{L}\right),\nonumber
\end{eqnarray}
where
$$\beta:=c_{p,E} L \gamma(S)\left(1+T^{\frac{1}{2}} (1+\gamma(S))\right),$$ and $c_{p,E}$ is a constant depending on  both, the space $E$ and the real number $p$.

Iterating (\ref{ident331}) to obtain:
\begin{eqnarray*}
& &\left\Vert Y^{n+1}-Y^n\right\Vert_{L_{\mathbb{F}}^p\left(\Omega;\gamma\left({T-\delta},T;E\right)\right)}+\left\Vert Z^{n+1}-Z^n\right\Vert_{L_{\mathbb{F}}^p\left(\Omega;\gamma\left({T-\delta},T;E\right)\right)}\\
&\leq & (\beta \delta^{\frac12})^n\left(\left\Vert Y^1 -Y^{0}\right\Vert_{L_{\mathbb{F}}^p\left(\Omega;\gamma\left({T-\delta},T;E\right)\right)}+\left\Vert Z^1- Z^{0}\right\Vert_{L_{\mathbb{F}}^p\left(\Omega;\gamma\left({T-\delta},T;E\right)\right)}+\dfrac{\beta \delta^{\frac12}}{L}\displaystyle\sum_{\mathtt{i}=1}^{n}\dfrac{\varepsilon_{\mathtt{i}}}{(\beta \sqrt{\delta})^{\mathtt{i}}} \right).
\end{eqnarray*}
By choosing  $\delta = \dfrac{1}{4}\Big(\dfrac{1}{\beta^2}\wedge T\Big)$ and  $\varepsilon_{\mathtt{i}} =\Big(\dfrac{\beta \sqrt{\delta}}{2}\Big)^{\mathtt{i}}$, one can see that there exists a constant $C$ not depending on $n$ such that:
\begin{eqnarray*}\label{CCC}
	\left\Vert Y^{n+1}-Y^n\right\Vert_{L_{\mathbb{F}}^p\left(\Omega;\gamma\left({T-\delta},T;E\right)\right)}+\left\Vert Z^{n+1}-Z^n\right\Vert_{L_{\mathbb{F}}^p\left(\Omega;\gamma\left({T-\delta},T;E\right)\right)}
	\leq \dfrac{C}{2^n}.
\end{eqnarray*}
Henceforth,  $(Y^n)_n$ and $(Z^n)_n$ are Cauchy sequences in the Banach space $L_{\mathbb{F}}^p\left(\Omega;\gamma\left({T-\delta},T;E\right)\right)$. Thus by Theorem \ref{thm251} and the Brownian martingale representation theorem [\cite{BSEE.B}, (3.2)], they converge to $\mathscr{B}([T-\delta,T])\otimes\mathscr{F}$-measurable and adapted limits $Y$ and $Z$ respectively.\\
Moreover, by virtue of (\ref{ident205}) we have for any $x^*\in E^*$
\begin{eqnarray*}
\left\Vert <Y-Y^n,x^*>\right\Vert_{L^p(\Omega;L^2([T-\delta,T]))}\leq \left\Vert Y-Y^n\right\Vert_{L_{\mathbb{F}}^p\left(\Omega;\gamma\left({T-\delta},T;E\right)\right)}\left\Vert x^*\right\Vert_{E^*}.
\end{eqnarray*}
Then, $(<Y^n,x^*>)_n$ converges in measure to $<Y,x^*>$ for every $x^*\in E^*$.


\noindent {\textbf{Third step:}}
Now, we shall verify that the pair $(Y,Z)$ obtained in the second step leads us to solve locally the backward stochastic evolution inclusion (\ref{E1}) for $t\in [{T-\delta},T]$.\\
Since,
\begin{eqnarray*}
& &\left\Vert g^{n+1}-g^n\right\Vert_{L_{\mathbb{F}}^p\left(\Omega;\gamma\left({T-\delta},T;E\right)\right)}\\
&\leq & \rho\left(\Lambda_{{T-\delta},T}^{Y^n,Z^n},\Lambda_{{T-\delta},T}^{Y^{n-1},Z^{n-1}}\right)+\varepsilon_n\\
& \leq & L \left\Vert Y^n -Y^{n-1} \right\Vert_{L_{\mathbb{F}}^p\left(\Omega;\gamma\left({T-\delta},T;E\right)\right)}+L \left\Vert  Z^n - Z^{n-1}\right\Vert_{L_{\mathbb{F}}^p\left(\Omega;\gamma\left({T-\delta},T;E\right)\right)}+\varepsilon_n
\\ &\leq & L \dfrac{C}{2^{n-1}} +\Big(\dfrac{\beta \sqrt{\delta}}{2}\Big)^{n} \\ &\leq &\dfrac{LC +1}{2^{n}}
\end{eqnarray*}
then the sequence $\left(g^n\right)_n$ converges in $L_{\mathbb{F}}^p\left(\Omega;\gamma\left({T-\delta},T;E\right)\right)$.\\
According to hypothesis $(H_3)-(ii)$ we have for any natural number $n$
$$ \rho\left( \Lambda_{{T-\delta},T}^{Y,Z},\Lambda_{{T-\delta},T}^{Y^{n-1},Z^{n-1}}\right)\leq L \left\Vert Y -Y^{n-1}\right\Vert_{L_{\mathbb{F}}^p\left(\Omega;\gamma\left({T-\delta},T;E\right)\right)}+L \left\Vert Z- Z^{n-1}\right\Vert_{L_{\mathbb{F}}^p\left(\Omega;\gamma\left({T-\delta},T;E\right)\right)}.$$
Thus,
\begin{eqnarray}
\lim_{n\longrightarrow +\infty}d_{L_{\mathbb{F}}^p\left(\Omega;\gamma\left({T-\delta},T;E\right)\right)}\left( g^n,\Lambda_{{T-\delta},T}^{Y,Z}\right)=0,\label{ident333}
\end{eqnarray}
where $d_{L_{\mathbb{F}}^p\left(\Omega;\gamma\left(T-\delta,T;E\right)\right)}\left( g^{n},\Lambda_{T-\delta,T}^{Y,Z}\right)$ denotes the distance between the element of $L_{\mathbb{F}}^p\left(\Omega;\gamma\left(T-\delta,T;E\right)\right)$ defined by $g^{n}$ and the subset $\Lambda_{T-\delta,T}^{Y,Z}$.\\
It yields that the limit of $\left(g^n\right)_n$ belongs to the closure of $\Lambda_{{T-\delta},T}^{Y,Z}$ in $L_{\mathbb{F}}^p\left(\Omega;\gamma\left({T-\delta},T;E\right)\right)$. By virtue of hypothesis $(H_4)$ and Lemma \ref{lem301}, there exists a measurable process $g\in L_{\mathbb{F}}^p\left(\Omega;\gamma\left({T-\delta},T;E\right)\right)$ such that
$$
{g(u,\omega)}\in G(u, Y(u,\omega),Z(u,\omega))\,\, \text{for a.e.} (u,\omega) \in [T-\delta, T]\times \Omega.
$$

It remains to focus our analysis to the equation (\ref{ident317}) in order to show the aim of this step.
\\Let $t\in [{T-\delta},T]$ . We have:
\begin{eqnarray*}
\left\Vert \int_t^{T} S(u-t)\Big(g_u-g_u^n\Big)du\right\Vert_{L^p(\Omega;E)}\leq  \sqrt{T}\gamma(S)\left\Vert g-g^n\right\Vert_{L^p\left(\Omega;\gamma\left({T-\delta},T;E\right)\right)}
\end{eqnarray*}
and,
\begin{eqnarray*}
\left\Vert \int_t^{T} S(u-t)\Big(Z_u -Z_u^n\Big) dW_u\right\Vert_{L^p(\Omega;E)}
&\eqsim_{p,E}&\left\Vert S(.-t)(Z-Z^n)\right\Vert_{L^p\left(\Omega;\gamma\left(t,T;E\right)\right)}\\
&\leq & \gamma(S)\left\Vert Z-Z^n\right\Vert_{L^p\left(\Omega;\gamma\left({T-\delta},T;E\right)\right)}.
\end{eqnarray*}
Moreover, $Y^n$ is continuous in $L^p\left(\Omega;E\right)$ since the stochastic integral is continuous in pth moment due to Proposition $4.3$ in \cite{neerven3}, the second term in the left hand side of (\ref{ident317}) is continuous in $L^p\left(\Omega;E\right)$ according to Proposition \ref{prop251}, and the right hand side of (\ref{ident317}) is continuous in $L^p\left(\Omega;E\right)$ according to dominated convergence theorem. Therefore, $\left(Y^n\right)_{n\geq 1}$ converges in $C([T-\delta,T];L^p(\Omega;E))$ to
$$S(T-.)\xi-\displaystyle\int_.^{T} S(u-.)g_u du-\int_.^{T} S(u-.) Z_udW_u.$$ 
Since $E$ is separable reflexive then the dual space $E^*$ is separable, and due to Corollary $1.1.25$ in \cite{neerven1} the sequence $\left(Y^n\right)_{n\geq 1}$ converges in $C([T-\delta,T];L^p(\Omega;E))$ to a limit equal to $Y$ for a.e.$(u,\omega)$ .\\
Consequently, there exists a pair $(Y, Z)$ mild $L^p$-solution of the problem (\ref{E1}) on $[{T-\delta},T]$.

\noindent {\textbf{Fourth step:}}
For fear of not being complete we will see in this last step how to build a mild $L^p$-solution of the BSEI (\ref{E1}) by investing the existence of a local mild $L^p$-solution over a sufficiently small diameter interval included in $[0,T]$.

Let  $\delta$ is the value generated the convergence and the mild $L^p$-solution of (\ref{E1}) on $[{T-\delta},T]$ in the previous step. In an identical way, we have the existence of the solution on the interval
$[T-2\delta, T -\delta]$. By a finite number of steps, we cover in this way the whole interval $[0, T]$.
\end{proof}
\begin{rk}
It would be necessary to discuss the special case where, $h:[0,T]\times \Omega\times E\times E\longrightarrow E$ is a $\mathscr{B}([0,T])\otimes \mathscr{F}\otimes\mathscr{B}(E)\otimes\mathscr{B}(E)$-measurable mapping such that for any $(t,\omega, x,y)$ from $[0,T]\times \Omega\times E \times E$ $$G(t,x,y)=\Big\{ h(t,x,y)\Big\}.$$ Assume that the hypotheses $(H_1)-(H_3)$ are satisfied.\\
Let $Y,Y',Z$ and $Z'$ be measurable processes defining elements of $L_{\mathbb{F}}^p\left(\Omega;\gamma\left(0,T;E\right)\right)$.\\
Since $\Lambda^{Y,Z}\neq \emptyset$, then the process $h(.,Y,Z):(t,\omega)\longmapsto h(t,\omega,Y(t,\omega),Z(t,\omega))$ defines an element of $L_{\mathbb{F}}^p\left(\Omega;\gamma\left(0,T;E\right)\right)$. Further, we have from $(H_3)-(ii)$ the following:
\begin{eqnarray*}
\left\Vert h(.,Y,Z)-h(.,Y',Z')\right\Vert_{L_{\mathbb{F}}^p\left(\Omega;\gamma\left(0,T;E\right)\right)}&=& d_{L_{\mathbb{F}}^p\left(\Omega;\gamma\left(0,T;E\right)\right)}(h(.,Y,Z),\Lambda^{Y',Z'})\\
&=& \rho(\Lambda^{Y,Z},\Lambda^{Y',Z'})\\
&\leq & L\left(\left\Vert Y- Y'\right\Vert_{L_{\mathbb{F}}^p\left(\Omega;\gamma\left(0,T;E\right)\right)}+\left\Vert Z- Z'\right\Vert_{L_{\mathbb{F}}^p\left(\Omega;\gamma\left(0,T;E\right)\right)}\right).
\end{eqnarray*}
Therefore, if in addition $E$ has the upper contraction property, we deduce according to the study that was done to solve the BSEE (\ref{BSEE3}) that there exist $Y^*,Z^*$ two measurable and adapted processes defining elements of $L_{\mathbb{F}}^p\left(\Omega;\gamma\left(0,T;E\right)\right)$ such that for any $t\in [0,T]$:
$$Y^*_t+\int_t^T S(u-t) h(u,Y^*(u),Z^*(u))du+\int_t^T S(u-t) Z^*(u)dW_u= S(T-t)\xi \;\textrm{ in }\;L^p\left(\Omega;E\right).$$
It is worth mentioning that the set $\Lambda^{Y,Z}$ contains a unique equivalence class of the space $L_{\mathbb{F}}^p\left(\Omega;\gamma\left(0,T;E\right)\right)$, thus it was well closed and the hypothesis $(H_4)$ was already satisfied.
\end{rk}

\par Before introducing other result, the following lemma illustrates that the geometric property cotype $2$ guarantees the reduction of the assumptions needed to solve the BSEI (\ref{E1}).

\begin{lem}\label{lem302}
  Let $F$ be a separable reflexive Banach space with cotype $2$.
  \\Let $H: [0,T]\times \Omega\longrightarrow\mathscr{K}_{cmpt}(F)$ be a set-valued function and let $t',t''$ be two numbers from $[0,T]$, $t'<t'',$ such that the following set
  \begin{eqnarray}\label{ident339}
  \Delta_{t',t''}:=\left\lbrace h\in L_{\mathbb{F}}^p\left(\Omega;\gamma\left(t',t'';F\right)\right):h(t,\omega)\in H(t,\omega)\,a.e.(t,\omega)\in [t',t'']\times \Omega\right\rbrace
  \end{eqnarray} 
is non-empty. Then, $\Delta_{t',t''}$ is closed for the strong topology of $L_{\mathbb{F}}^p\left(\Omega;\gamma\left(t',t'';F\right)\right)$.
\end{lem}

\begin{proof}
	Let $\mathfrak{h}:[t',t''] \times \Omega\longrightarrow F$ be a measurable mapping defining an element of $L^p\left(\Omega;\gamma\left(t',t'';F\right)\right)$.
	\\Taking into account Theorem $9.2.11$ in \cite{neerven2} and H\"older inequality, we derive that:
	\begin{eqnarray}
		\upsilon_{2,F}^{\gamma}\left(\int_{\Omega} \left\Vert \mathfrak{h}\right\Vert_{\gamma(t',t'';F)}^p d\mathbb{P}\right)^{\frac{1}{p}}& \geq & \left(\int_{\Omega} \left\Vert \mathfrak{h}\right\Vert_{L^2([t',t''];F)}^p d\mathbb{P}\right)^{\frac{1}{p}}\nonumber\\
		& \geq & \left(\int_{\Omega} (t''-t')^{p(\frac{1}{2}-\frac{1}{2\wedge p})}\left\Vert \mathfrak{h}\right\Vert_{L^{2\wedge p}([t',t''];F)}^p d\mathbb{P}\right)^{\frac{1}{p}}\nonumber\\
		&\geq & (t''-t')^{\frac{1}{2}-\frac{1}{2\wedge p}}\left(\int_{\Omega}\left\Vert \mathfrak{h}\right\Vert_{L^{2\wedge p}([t',t''];F)}^{2\wedge p} d\mathbb{P}\right)^{\frac{1}{2\wedge p}},\label{ident304}
	\end{eqnarray}
	where $\upsilon_{2,F}^{\gamma}$ is the Gaussian cotype $2$ constant (\cite{neerven2}, Definition $7.1.17$).
	\\Thus, from (\ref{ident304}) and Tonelli's theorem we get to:
	\begin{eqnarray}\label{ident305}
		\upsilon_{2,F}^{\gamma}\left\Vert \mathfrak{h}\right\Vert_{L^p\left(\Omega;\gamma\left(t',t'';F\right)\right)}& \geq   		& (t''-t')^{\frac{1}{2}-\frac{1}{2\wedge p}}\left\Vert \mathfrak{h}\right\Vert_{L^{2\wedge p}([t',t'']\times			\Omega;F)}.
	\end{eqnarray}
	Let $(\mathfrak{h}^n)_{n\geq 1}$ be a sequence from $\Delta_{t',t''}$ converging to $\mathfrak{B}$ w.r.t. the 		strong topology of $L_{\mathbb{F}}^p\left(\Omega;\gamma\left(t',t'';F\right)\right)$.\\
According to (\ref{ident305}) one can see that $(\mathfrak{h}^n)_{n\geq 1}$ is a Cauchy sequence of $L^{2\wedge p}		([t',t'']\times\Omega;F)$, and thus it converges in this space to a measurable mapping $\mathfrak{h}:[t',t'']			\times\Omega\longrightarrow F$.
	\\Then there exists a subsequence $(\mathfrak{h}^{\varphi(n)})_{n\geq 1}$ such that:
	\begin{eqnarray*}\label{ident306}
		\mathfrak{h}^{\varphi(n)}\xrightarrow[n\to +\infty]{F}\mathfrak{h}\;\textrm{ for almost everywhere } (t,\omega)\in [t',t'']\times\Omega.
	\end{eqnarray*}
Keeping in mind the nature of the range of $H$, we infer that $\mathfrak{h}(t,\omega)\in H(t,\omega)$ for $a.e.(t,\omega)\in [t',t'']\times\Omega$.\\
Since $(\mathfrak{h}^{\varphi(n)})_{n\geq 1}$ is a sequence from $\Delta_{t',t''}$ converging to $\mathfrak{B}$, 		then there exists a further subsequence $(\mathfrak{h}^{\varphi\circ\phi(n)})_{n\geq 1}$ such that for almost surely $\omega\in \Omega$ we have:
	\begin{eqnarray*}\label{ident307}
		\mathfrak{h}^{\varphi\circ\phi(n)}(.,\omega)\xrightarrow[n\to +\infty]{\gamma\left(t',t'';F\right)}\mathfrak{B}(\omega).
	\end{eqnarray*}
	Let $\mathscr{N}'(\Omega)$ be the $\mathbb{P}$-negligible set which correspond to the previous assertion.\\ Let $\omega$ be a fixed element from the complement of $\mathscr{N}'(\Omega)$.
	By applying once more Theorem $9.2.11$ in \cite{neerven2}, we derive that for any natural numbers $n,m$ :
		\begin{eqnarray*}\label{ident308}
			\left\Vert \mathfrak{h}^{\varphi\circ\phi(n)}(.,\omega)-\mathfrak{h}^{\varphi\circ\phi(m)}(.,\omega)\right\Vert_{L^2([t',t''];F)}\leq \upsilon_{2,F}^{\gamma}\left\Vert \mathscr{I}_{\mathfrak{h}^{\varphi\circ\phi(n)}(.,\omega)}-\mathscr{I}_{\mathfrak{h}^{\varphi\circ\phi(m)}(.,\omega)}\right\Vert_{\gamma(t',t'';F)},
		\end{eqnarray*}
		where $\mathscr{I}_{\mathfrak{h}^{\varphi\circ\phi(n)}(.,\omega)}$ designes the Pettis integral operator associated to $\mathfrak{h}^{\varphi\circ\phi(n)}(.,\omega)$ in the sense of Definition \ref{def241}.
	Thus, $(\mathfrak{h}^{\varphi\circ\phi(n)}(.,\omega))_{n\geq 1}$ is a Cauchy sequence in $L^2([t',t''];F)$ for any $\omega$ outside of $\mathscr{N}'(\Omega)$. On the other hand, according to Tonelli's theorem we have
	$$
			\lim_{n\longrightarrow \infty}\left\Vert \mathfrak{h}^{\varphi\circ\phi(n)}-\mathfrak{h}\right\Vert_{L^{2\wedge p}([t',t'']\times\Omega;F)}=0 \implies \lim_{n\longrightarrow \infty}\int_{\Omega}\left\Vert \mathfrak{h}^{\varphi\circ\phi(n)}(.,\omega)-\mathfrak{h}(.,\omega)\right\Vert_{L^{2\wedge p}([t',t''];F)}^{2\wedge p}\mathbb{P}(d\omega)=0.
$$
Then, there exists a $\mathbb{P}$-negligible set $\mathscr{N}(\Omega)$ which contains $\mathscr{N}'(\Omega)$ and such that $(\mathfrak{h}^{\varphi\circ\phi(n)}(.,\omega))_{n\geq 1}$ converges to $\mathfrak{h}(.,\omega)$ with respect to the strong topology of $L^{2\wedge p}([t',t''];F)$ for any $\omega$ outside of $\mathscr{N}(\Omega)$. It follows that $(\mathfrak{h}^{\varphi\circ\phi(n)}(.,\omega))_{n\geq 1}$ converges to $\mathfrak{h}(.,\omega)$ with respect to the strong topology of $L^2([t',t''];F)$ and $\mathfrak{h}(.,\omega)\in L^2([t',t''];F)$ for any $\omega$ outside of $\mathscr{N}(\Omega)$. Subsequently, for any $\omega$ from $\Omega\setminus\mathscr{N}(\Omega)$, there exists a subsequence $(\mathfrak{h}^{\varphi\circ\phi\circ\psi_{\omega}(n)})_{n\geq 1}$ such that:
		\begin{eqnarray*}
			\mathfrak{h}^{\varphi\circ\phi\circ\psi_{\omega}(n)}(.,\omega)&\xrightarrow[n\to +\infty]{\gamma\left(t',t'';F\right)}&\mathfrak{B}(\omega),\\
			\mathfrak{h}^{\varphi\circ\phi\circ\psi_{\omega}(n)}(t,\omega)&\xrightarrow[n\to +\infty]{F}&\mathfrak{h}(t,\omega)\quad\textrm{for a.e.}\;t\in[t',t''].
		\end{eqnarray*}
		Then, according to Lemma $9.2.7$ in \cite{neerven2}, we infer that $\mathfrak{B}(\omega)=\mathscr{I}_{\mathfrak{h}(.,\omega)}$ for almost surely $\omega$. Moreover, $\mathfrak{h}$ defines the random variable $\mathfrak{B}$ by virtue of Lemma $2.7$ in \cite{neerven3}. Therefore $\mathfrak{h}$ defines an element of $\Delta_{t',t''}$. Finally $\Delta_{t',t''}$ is closed.
\end{proof}
\begin{examp}
Let $\kappa:[0,T]\times \Omega\longrightarrow \mathscr{K}_{cmpt}(\mathbb{R})$ be a $\mathscr{B}([0,T])\otimes\mathscr{F}$-measurable $\mathbb{F}$-adapted set-valued such that the quantity $\mathbb{E}\left(\left\Vert \vert\kappa\vert_{\varrho'}\right\Vert_{L^2([0,T])}^p\right)$ is finite, where $\varrho'$ is the Hausdorff distance on $\mathscr{K}_{cmpt}(\mathbb{R})$.\\
Let $x$ be a fixed element of $E$ and putting $G(u,\omega,.,.)=\kappa(u,\omega)x$ for any $(u,\omega)$ from $[0,T]\times \Omega$.\\
According to Proposition $2.5$ in \cite{jinping}, there is a $\Sigma_{\mathbb{F}}$-measurable selection $f$ of $\kappa$, and by virtue of Corollary $1.1.2$ in \cite{neerven1}, we infer that the function $f\otimes x$ is also $\Sigma_{\mathbb{F}}$-measurable. Moreover,
$$\left\Vert f\otimes x\right\Vert^p_{L^p\left(\Omega;\gamma\left(0,T;E\right)\right)}=\mathbb{E}\left(\left\Vert f\otimes x\right\Vert_{\gamma(0,T;E)}^p\right)=\mathbb{E}\left(\left\Vert f\right\Vert_{L^2([0,T])}^p\left\Vert x\right\Vert_{E}^p\right)\leq\left\Vert x\right\Vert_{E}^p\mathbb{E}\left(\left\Vert \vert\kappa\vert_{\varrho'}\right\Vert_{L^2([0,T])}^p\right),$$
thus the hypothesis $(H_3)$ is fulfilled.
\\Since $G$ takes images in $\mathscr{K}_{cmpt}(vect<x>)$, and $vect<x>$ has cotype $2$ according to Theorem $7.3.1$ in \cite{neerven2}, then $(H_4)$ is also fulfilled.
\end{examp}

 The second main theorem of this section is the following.
\begin{thm}\label{thm302}
Let $(H_1)-(H_3)$ be satisfied and assume in addition that $E$ has the upper contraction property and with cotype $2$. Then, the backward stochastic evolution inclusion (\ref{E1}) admits a mild $L^p$-solution $(Y,Z)$, and the associated process $g$ can be chosen $\Sigma_{\mathbb{F}}$-measurable.
\end{thm}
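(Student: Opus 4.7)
The strategy is to reduce the statement to Theorem~\ref{thm301} by showing that the cotype~$2$ assumption on $E$ forces the closedness condition $(H_4)$ to hold automatically, so that $(H_4)$ becomes superfluous; the additional claim about $g$ being measurable adapted is then extracted from the proof of Lemma~\ref{lem302}.

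First I would fix arbitrary measurable adapted processes $Y, Z$ defining elements of $L_{\mathbb{F}}^p\left(\Omega;\gamma\left(0,T;E\right)\right)$ and consider the set-valued map $H:[0,T]\times\Omega \longrightarrow \mathscr{K}_{cmpt}(E)$ defined by $H(t,\omega):=G(t,\omega,Y(t,\omega),Z(t,\omega))$. By the joint measurability of $G$ and the measurability of $(Y,Z)$, $H$ is $\mathscr{B}([0,T])\otimes \mathscr{F}$-measurable. Hypothesis $(H_3)$-$(i)$ and Lemma~\ref{lem301} give that the set $\Lambda^{Y,Z}$ (which is precisely the set $\Delta_{0,T}$ of Lemma~\ref{lem302} for this choice of $H$) is non-empty. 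Lemma~\ref{lem302} then yields directly that $\Lambda^{Y,Z}$ is closed in the strong topology of $L_{\mathbb{F}}^p\left(\Omega;\gamma\left(0,T;E\right)\right)$. Thus $(H_4)$ is a consequence of $(H_1)$, $(H_3)$ and the cotype~$2$ hypothesis.

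Second, having verified that $(H_4)$ is satisfied, I would invoke Theorem~\ref{thm301} to construct the pair $(Y,Z)$ together with a selection $g\in \Lambda^{Y,Z}$ satisfying $(\ref{ident303})$. The Picard iteration and concatenation of the local mild $L^p$-solutions over a finite partition of $[0,T]$ go through unchanged.

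The only remaining point is the measurability and adaptedness of $g$. At each localisation step, $g$ is obtained as the $L_{\mathbb{F}}^p\left(\Omega;\gamma\left(T-\delta,T;E\right)\right)$-limit of measurable adapted processes $g^n$. Under cotype~$2$, the continuous embedding behind inequality $(\ref{ident305})$ gives $\left\Vert g^n-g^m\right\Vert_{L^{2\wedge p}([T-\delta,T]\times\Omega;E)}\lesssim \left\Vert g^n-g^m\right\Vert_{L_{\mathbb{F}}^p(\Omega;\gamma(T-\delta,T;E))}$, so $(g^n)_n$ is Cauchy in $L^{2\wedge p}$. A standard subsequence extraction then produces a $\mathscr{B}([T-\delta,T])\otimes\mathscr{F}$-measurable and $\mathbb{F}$-adapted process $\tilde g$ (pointwise a.e. limit of the adapted $g^n$) which, by the final part of the proof of Lemma~\ref{lem302}, defines the same equivalence class as $g$ and satisfies $\tilde g(t,\omega)\in G(t,\omega,Y(t,\omega),Z(t,\omega))$ for a.e. $(t,\omega)$. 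Concatenating the local measurable adapted selections over the finite partition of $[0,T]$ delivers the global $g$ with the required properties.

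The main obstacle is conceptually light: it is the clean identification of $\Lambda^{Y,Z}$ with the set $\Delta_{0,T}$ of Lemma~\ref{lem302} with $H(t,\omega):=G(t,\omega,Y(t,\omega),Z(t,\omega))$, and the careful subsequence argument that upgrades the abstract $\gamma$-space limit $g$ to a genuine measurable adapted representative. Once these two points are settled, the result is a one-line reduction to Theorem~\ref{thm301}.
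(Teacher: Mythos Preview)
Your proposal is correct and follows essentially the same approach as the paper: both arguments reduce the statement to Theorem~\ref{thm301} by invoking Lemma~\ref{lem302} to obtain the closedness of $\Lambda^{Y,Z}$ (so that $(H_4)$ holds automatically under cotype~$2$), and then exploit the cotype~$2$ embedding to upgrade the $\gamma$-space limit $g$ to a measurable adapted representative. The paper's proof is terser---it simply says to argue as in Theorem~\ref{thm301}, apply Lemma~\ref{lem302} at the step where the limit of $(g^n)_n$ is identified, and use the general fact that under cotype~$2$ every element of $L_{\mathbb{F}}^p(\Omega;\gamma(0,T;E))$ admits a measurable adapted representative---but your explicit subsequence extraction via inequality~(\ref{ident305}) is precisely the mechanism underlying that general fact and the final part of the proof of Lemma~\ref{lem302}.
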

\begin{proof}
It suffices to arguing similarly as in the proof of Theorem \ref{thm301} and applying Lemma \ref{lem302} when calculating the limit of the sequence $(g_n)_n$ constructed in the settings (\ref{eq1}), in order to obtain a mild-$L^p$ solution $(Y,Z)$ with an associated measurable process $\tilde{g}\in \Lambda^{Y,Z}$.\\
Let $(\tilde{g}^n)_n$ be a sequence of $\mathbb{F}$-adapted step processes converging to $\tilde{g}$ in $L^p(\Omega;\gamma(0,T;E))$.
\\Similarly to (\ref{ident305}), we have for every natural numbers $n,m$
\begin{eqnarray}
\upsilon_{2,E}^{\gamma}\left\Vert \tilde{g}^n-\tilde{g}^m\right\Vert_{L^p\left(\Omega;\gamma\left(0,T;E\right)\right)}& \geq   		& T^{\frac{1}{2}-\frac{1}{2\wedge p}}\left\Vert \tilde{g}^n-\tilde{g}^m\right\Vert_{L^{2\wedge p}([0,T]\times\Omega,\Sigma_{\mathbb{F}};E)},\label{ident337}\\
\upsilon_{2,E}^{\gamma}\left\Vert \tilde{g}^n-\tilde{g}\right\Vert_{L^p\left(\Omega;\gamma\left(0,T;E\right)\right)}& \geq   		& T^{\frac{1}{2}-\frac{1}{2\wedge p}}\left\Vert \tilde{g}^n-\tilde{g}\right\Vert_{L^{2\wedge p}([0,T]\times\Omega,\mathscr{B}([0,T])\otimes\mathscr{F};E)}\label{ident338}.
\end{eqnarray}
From (\ref{ident337}), there exist a measurable and adapted process $g$, and a negligible set $\mathscr{N}_1\in \Sigma_{\mathbb{F}}$ such that $(\tilde{g}^n)_n$ converges pointwise to $g$ outside of $\mathscr{N}_1$. Moreover, (\ref{ident338}) asserts that there exists $\mathscr{N}_2\in \mathscr{B}([0,T])\otimes\mathscr{F}$ such that $(\tilde{g}^n)_n$ converges pointwise to $\tilde{g}$ outside of $\mathscr{N}_2$. Therefore $g$ and $\tilde{g}$ are equal outside of a negligible set $\mathscr{N}\in \mathscr{B}([0,T])\otimes\mathscr{F}$. Finally, $g$ and $\tilde{g}$ define the same element of $\Lambda^{Y,Z}$.
\end{proof}

The remaining lemma is concerned to determine, under suitable conditions, a relationship between the notion of subtrajectory integrals and the set defined by (\ref{ident339}).
\begin{lem}\label{lem303}
Let $F$ be a separable reflexive Banach space. Let  $H: [0,T]\times \Omega\longrightarrow\mathscr{K}_{cmpt}(F)$ be a set-valued function, let $p=2$ and assume that $\Delta_{0,T}$ is non-empty. The following assertions hold:
\begin{itemize}
\item[(a)]If $F$ has cotype $2$, then each element of $\Delta_{0,T}$ can be defined by an element of $S^2_{\Sigma_{\mathbb{F}}}(H)$.
\item[(b)]If $F$ is a Hilbert space, then each element $g\in S^2_{\Sigma_{\mathbb{F}}}(H)$ defines an element $U_g\in \Delta_{0,T}$ and we have $\left\Vert g\right\Vert_{L^2([0,T]\times \Omega;F)}=\left\Vert U_g\right\Vert_{L_{\mathbb{F}}^2(\Omega;\gamma(0,T;F))}$.
\end{itemize}
\end{lem}
\begin{proof}
$(a)$ Let $g$ be a measurable mapping defining an element X in $\Delta_{0,T}$. Since $F$ has cotype $2$, then it suffices to argue similarly as in the proof of Theorem \ref{thm302} in order to have the existence of a measurable and adapted mapping $\tilde{g}$ defining the same random variable $X$. Next, by using the continuous embedding $\gamma(0,T;F)\hookrightarrow L^2([0,T];F)$ we get $\tilde{g}\in S^2_{\Sigma_{\mathbb{F}}}(H)$.\\
$(b)$ Let $g$ be a $\Sigma_{\mathbb{F}}$-measurable mapping from $S^2_{\Sigma_{\mathbb{F}}}(H)$. Taking into account Proposition $1.2.24$ in \cite{neerven1}, the function $f:u\in [0,T]\longmapsto g(u,.)$ defines an element of $L^2([0,T];L^2(\Omega;F))$. Since $L^2(\Omega;F)$ is a Hilbert space, we derive from [\cite{neerven2}, Proposition $9.2.9$] that $L^2([0,T];L^2(\Omega;F))=\gamma(0,T;L^2(\Omega;F))$ isometrically. Thus $f$ belongs to $\gamma(0,T;L^2(\Omega;F))$.
\\Let $\mathscr{I}_f:L^2([0,T])\longrightarrow L^2(\Omega;F)$ the Pettis integrale operator in the sense of Definition \ref{def241}, and let $U$ be the random variable such that $\mathcal{I}_{\gamma}(U)=\mathscr{I}_f$, where $\mathcal{I}_{\gamma}$ is the isomorphism defined as in (\ref{ident207}) (with $s=0,\,t=T$ and  $E=F$). \\
Let $h\in L^2([0,T])$.
Since $\tilde{f}: u\in [0,T]\longmapsto h(u)f(u)$ defines an element of $L^1([0,T];L^2(\Omega;F))$, then the image $\mathscr{I}_f(h)$ and the Bochner integral of $\tilde{f}$ agree.\\
Let $(g^n)_{n\geq 1}$ be a sequence converging to $g$ in the space $L^2([0,T]\times \Omega,\mathscr{B}([0,T])\otimes\mathscr{F};F)$ such that $g^n=\sum_{j=1}^{k_n}\mathds{1}_{A_n^j}\mathds{1}_{B_n^j}x_j^n,\,A_n^j\in \mathscr{B}([0,T]),\,B_n^j\in \mathscr{F},\,x_j^n\in F$. It is easily to check that
\begin{eqnarray}\label{ident341}
\left(\int_{[0,T]}^{Bochner}h(u)g^n(u,.)du\right)(\omega)=\int_{[0,T]}^{Bochner}h(u)g^n(u,\omega)du,
\end{eqnarray}
for every pair $(n,\omega)$. Since for a subsequence, the right-hand side (resp. left-hand side) of (\ref{ident341}) converges pointwise for almost surely $\omega$, then
\begin{eqnarray}\label{ident342}
\left(\int_{[0,T]}^{Bochner}h(u)g(u,.)du\right)(\omega)=\int_{[0,T]}^{Bochner}h(u)g(u,\omega)du\quad a.s. \,  \omega.
\end{eqnarray}
On the other hand, by virtue of the isometry $\gamma(0,T;F)=L^2([0,T];F)$ we have $g(.,\omega)\in \gamma(0,T;F)$ for $a.s.\omega$. Thus, the Pettis integral $\mathscr{I}_{g(.,\omega)}(h)$ and the right hand side of (\ref{ident342}) agree for $a.s.\omega$, moreover $U(\omega)h=\mathscr{I}_{g(.,\omega)}(h)$ for $a.s.\omega$. Next, according to the separability of the space $L^2([0,T])$, we infer that there exists a $\mathbb{P}$-negligible set $\mathscr{N}$ such that
\begin{eqnarray*}
\forall \omega\in \Omega\setminus\mathscr{N},\;\forall h\in L^2([0,T]),\;U(\omega)h=\mathscr{I}_{g(.,\omega)}(h).
\end{eqnarray*}
Therefore $g(.,\omega)$ defines $U(\omega)$ for all $\omega\in \Omega\setminus\mathscr{N}$, and $g$ defines $U$.\\
Finally, it suffices to apply [\cite{neerven3}, Propositions $2.11$ and $2.12$] in order to accomplish the proof.
\end{proof}

\section{Martingale type $2$ Spaces's case}
Let $(\Omega, \mathscr{F}, \mathbb{F}, \mathbb{P})$ be a stochastic basis on which is defined a Brownian motion $(W_t)_{0\leq t\leq T}$ such that $\mathscr{F}=\mathscr{F}_T$ and $\mathbb{F}:=\Big(\mathscr{F}_t\big)_{t\in [0,T]}$ is the augmented natural filtration of $(W_t)_{0\leq t\leq T}$.\\

We start by declaring the following lemma which will prove both, measurability and adaptness of the orthogonal projection, of a measurable and adapted process onto a measurable and adapted set-valued under appropriate conditions.
\begin{lem}\label{lem402}
Let $F$ be a reflexive separable Banach space.

Let $\Theta:[0,T]\times \Omega\rightarrow \mathscr{K}_{ccmpt}\left(F\right)$ and $\varphi:[0,T]\times \Omega\rightarrow F$ be two $\mathscr{B}([0,T])\otimes \mathscr{F}$-measurable and adapted processes.

Then, there exists a measurable and adapted selection $\theta$ of $\Theta$ such that for any $(t,\omega)\in [0,T]\times \Omega$,\;we have:
$$d_{F}\left(\varphi(t,\omega),\Theta(t,\omega)\right)=\Vert \theta(t,\omega)-\varphi(t,\omega)\Vert_{F}.$$
\end{lem}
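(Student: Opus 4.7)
The plan is to construct $\theta$ as a measurable selection of the metric-projection set-valued map. Define
\[
\Pi(t,\omega,x) := \{y \in \Theta(t,\omega) : \|y - x\|_F = d_F(x, \Theta(t,\omega))\}.
\]
Because $\Theta(t,\omega)$ is non-empty, convex and compact and $y \mapsto \|y - x\|_F$ is continuous and convex, $\Pi(t,\omega,x)$ is itself non-empty, convex and compact for every $(t,\omega,x)$, so $\Pi$ takes values in $\mathscr{K}_{ccmpt}(F)$.

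First I would verify that $(t,\omega,x) \mapsto d_F(x, \Theta(t,\omega))$ is jointly measurable and adapted. Since $\Theta$ is measurable, adapted and compact-valued, Castaing's representation theorem supplies a countable family $(\sigma_n)_{n\ge 1}$ of measurable and adapted selections of $\Theta$ with $\Theta(t,\omega) = \overline{\{\sigma_n(t,\omega): n\ge 1\}}$, whence
\[
d_F(x, \Theta(t,\omega)) = \inf_{n\ge 1} \|x - \sigma_n(t,\omega)\|_F
\]
is measurable and adapted jointly in $(t,\omega,x)$. From the identity $\Pi(t,\omega,x) = \Theta(t,\omega) \cap \overline{B}_F(x, d_F(x,\Theta(t,\omega)))$, one then verifies that $\Pi$ is $\mathscr{P}\otimes \mathscr{B}(F)$-measurable, where $\mathscr{P}$ denotes the $\sigma$-algebra on $[0,T]\times\Omega$ carrying the measurable-and-adapted structure (passing, if necessary, to progressively measurable modifications of $\Theta$ and $\varphi$ in order to work with the progressively measurable $\sigma$-algebra $\mathscr{P}_{\mathbb{F}}$).

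Next I would invoke Lemma \ref{lem401} with $\tilde{F} = F$ and $f = \varphi$ to conclude that
\[
\Upsilon(t,\omega) := \Pi(t,\omega,\varphi(t,\omega))
\]
is a $\mathscr{P}$-measurable $\mathscr{K}_{ccmpt}(F)$-valued map. Since $F$ is separable and reflexive, it is a Polish space, so the Kuratowski--Ryll-Nardzewski measurable selection theorem produces a $\mathscr{P}$-measurable, hence measurable and adapted, selection $\theta$ of $\Upsilon$. By construction $\theta(t,\omega) \in \Theta(t,\omega)$ and $\|\theta(t,\omega) - \varphi(t,\omega)\|_F = d_F(\varphi(t,\omega), \Theta(t,\omega))$ for every $(t,\omega)$.

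The main obstacle will be the joint measurability of $\Pi$ on $[0,T]\times\Omega\times F$: one must show that $\{(t,\omega,x) : \Pi(t,\omega,x)\cap O \ne \emptyset\}$ lies in $\mathscr{P}\otimes \mathscr{B}(F)$ for every open $O \subset F$, while simultaneously respecting the filtration. Once this compatibility is established, Lemma \ref{lem401} transports the measurability to $\Upsilon$ and Kuratowski--Ryll-Nardzewski delivers the desired adapted selection $\theta$.
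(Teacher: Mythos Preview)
Your strategy is correct and close in spirit to the paper's, but the paper takes a more direct route that sidesteps the ``main obstacle'' you flag. Instead of carrying the free variable $x$ and proving $\mathscr{P}\otimes\mathscr{B}(F)$-measurability of $\Pi$ in order to invoke Lemma~\ref{lem401}, the paper substitutes $\varphi$ at the outset and works entirely on $[0,T]\times\Omega$. Concretely, it introduces the $\sigma$-algebra
\[
\Sigma=\bigcap_{0\le t\le T}\bigl\{A\in\mathscr{B}([0,T])\otimes\mathscr{F}:A_t\in\mathscr{F}_t\bigr\},
\]
observes that ``measurable and adapted'' is precisely $\Sigma$-measurability, applies Castaing's representation to $\Theta$ with respect to $\Sigma$ to get that $(t,\omega)\mapsto d_F(\varphi(t,\omega),\Theta(t,\omega))$ is $\Sigma$-measurable, then uses the closed-ball measurability result (Castaing--Valadier, Theorem~III.41) and the intersection lemma to conclude that $(t,\omega)\mapsto \Theta(t,\omega)\cap \overline{B}_F\bigl(\varphi(t,\omega),d_F(\varphi(t,\omega),\Theta(t,\omega))\bigr)$ is $\Sigma$-measurable. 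This is exactly your $\Upsilon$, reached without the joint-in-$x$ detour. Your use of Kuratowski--Ryll-Nardzewski at the end is arguably cleaner than the paper's: the paper asserts that this intersection is the singleton $\{\theta(t,\omega)\}$, which tacitly presumes uniqueness of the metric projection (not guaranteed in a merely reflexive space), whereas your selection argument does not need that. One caution: your suggestion of ``passing to progressively measurable modifications'' would jeopardise the \emph{pointwise} conclusion for every $(t,\omega)$; the paper's device of working with $\Sigma$ avoids this and is the right way to encode adaptedness here.
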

\begin{proof}
Let $(t,\omega)\in [0,T]\times\Omega$. Since $\Theta(t,\omega)$ is a nonempty closed convex set of $F$, and $F$ is reflexive, then there exists $\theta(t,\omega)$ an element of $\Theta(t,\omega)$ such that $d_{F}\left(\varphi(t,\omega),\Theta(t,\omega)\right)=\Vert \theta(t,\omega)-\varphi(t,\omega)\Vert_{F}.$\\
Next, since $\Theta$ is a $\mathscr{B}([0,T])\otimes \mathscr{F}$-measurable and adapted set-valued process, then it is $\Sigma_{\mathbb{F}}$-measurable. Thus according to Castaing representation theorem we derive that there exists a sequence $\left(\theta^n\right)_{n\geq 1}$ of measurable and adapted selections of $\Theta$ such that $\Theta\left(t,\omega\right)=cl_{F}(\cup_{n=1}^\infty\left\lbrace \theta^n\left(t,\omega\right)\right\rbrace)$ for all $(t,\omega)\in [0,T]\times \Omega$. It follows that:
\begin{eqnarray*}
\forall (t,\omega)\in [0,T]\times\Omega,\quad d_{F}\left(\varphi(t,\omega),\Theta(t,\omega)\right)=\inf_{n\geq 1}\left\Vert \varphi(t,\omega)-\theta^n\left(t,\omega\right)\right\Vert_{F}.
\end{eqnarray*}
Then, the process $(t,\omega)\in [0,T]\times\Omega\longmapsto d_{F}\left(\varphi(t,\omega),\Theta(t,\omega)\right)$ is measurable and adapted.\\
Taking into account Theorem III.$4$1 in \cite{closedball}, we infer that the set-valued:
\begin{eqnarray*}
(t,\omega)\in [0,T]\times\Omega\longmapsto B^f\left(\varphi(t,\omega),d_{F}\left(\varphi(t,\omega),\Theta(t,\omega)\right)\right)
\end{eqnarray*}
is $\mathscr{B}([0,T])\otimes \mathscr{F}$-measurable and adapted, where $B^f$ designates a closed ball in $F$. So, the set-valued
\begin{eqnarray*}
\left(t,\omega\right)\in [0,T]\times\Omega\longmapsto B^f\left(\varphi(t,\omega),d_{F}\left(\varphi(t,\omega),\Theta(t,\omega)\right)\right)\bigcap \Theta\left(t,\omega\right)
\end{eqnarray*}
is $\mathscr{B}([0,T])\otimes \mathscr{F}$-measurable and adapted by applying Proposition $6.5.7$ in \cite{intermeas}. On the other hand,
\begin{eqnarray*}
B^f\left(\varphi(t,\omega),d_{F}\left(\varphi(t,\omega),\Theta(t,\omega)\right)\right)\bigcap \Theta\left(t,\omega\right)=\left\lbrace \theta(t,\omega)\right\rbrace
\end{eqnarray*}
for any $\left(t,\omega\right)$ from $[0,T]\times \Omega$. Therefore, the mapping $\theta:[0,T]\times\Omega\longrightarrow F$ is measurable and adapted.
\end{proof}

Let $E$ be a separable UMD space. Let $A$ be the generator of a $C_0$-semigroup $\left\lbrace S_t\right\rbrace_{t\geq 0}$ on $E$. We now propose to study the following backward stochastic evolution inclusion:
\begin{eqnarray}\label{E2}
BSEI(2):\begin{cases}
dY_u+AY_udu &\in   \mathbf{G}\left(u,Y_u,Z_u\right)du +Z_udW_u\\
Y_T&= \xi
\end{cases}
\end{eqnarray}
where $\mathbf{G}:\left[0,T\right]\times\Omega\times E\times E\rightarrow \mathscr{K}_{ccmpt}\left(E\right)$ is a $\mathscr{B}([0,T])\otimes\mathscr{F}\otimes \mathscr{B}(E)\otimes \mathscr{B}(E)$-measurable set-valued.
It can be seen that the set-valued introduced in problem (\ref{E2}) has a more specific range compared to the set-valued in problem (\ref{E1}), which will enable us to exploit Lemma \ref{lem402}.\bigskip\\
Let us consider the following package of hypotheses:\\
\begin{itemize}
\item[$(H_1')$]
\begin{itemize}
\item[(i)] $\xi$ belongs to $L^p\left(\Omega,\mathscr{F}_T,\mathbb{P};E\right)$;
\item[(ii)]the set $\left\lbrace  S_t\right\rbrace_{t\in [0,T]}$ is $\gamma$-bounded.
\end{itemize}
\item[$(H_2')$]
The set-valued $\mathbf{G}$ has the following properties:
\begin{itemize}
\item[(i)]
$\mathbf{G}$ is a $\Sigma_{\mathbb{F}}\otimes \mathscr{B}(E)\otimes \mathscr{B}(E)$-measurable set-valued;
\item[(ii)]
$\mathbb{E}\left( \left\Vert\left\vert \mathbf{G}\left(.,0_{E},0_{E}\right)\right\vert_{\varrho}\right\Vert_{L^2([0,T])}^p\right)<\infty$;
\item[(iii)]for any measurable mappings $\mathtt{Y},\mathtt{Y}',\mathtt{Z},\mathtt{Z}':[0,T]\longrightarrow E$ defining elements in $\gamma(0,T;E)$:
\begin{eqnarray*}
\left\Vert \varrho\left(\mathbf{G}(.,\mathtt{Y},\mathtt{Z}),\mathbf{G}(.,\mathtt{Y}',\mathtt{Z}')\right)\right\Vert_{L^2([0,T])}\leq K\left(\left\Vert \mathtt{Y}- \mathtt{Y}'\right\Vert_{\gamma\left(0,T;E\right)}+\left\Vert \mathtt{Z}- \mathtt{Z}'\right\Vert_{\gamma\left(0,T;E\right)}\right).
\end{eqnarray*}
\end{itemize}
Where $\varrho$ is the Hausdorff distance on $\mathscr{K}_{cmpt}(E)$.
\end{itemize}
\begin{lem}\label{lem403}Let $s,t$ be two real numbers such that $0\leq s<t\leq T$.\\
If $(H'_2)$ is fulfilled then the following properties hold:
	\begin{itemize}
		\item[$1)$]
		For any measurable mappings $\mathtt{Y},\mathtt{Z}:[s,t]\longrightarrow E$ defining elements in $\gamma(s,t;E)$, we have
		\begin{eqnarray}
		\left\Vert \varrho\left(\mathbf{G}(.,\mathtt{Y},\mathtt{Z}),\mathbf{G}(.,\mathtt{Y}',\mathtt{Z}')\right)\right\Vert_{L^2([s,t])}\leq K\left(\left\Vert \mathtt{Y}- \mathtt{Y}'\right\Vert_{\gamma\left(s,t;E\right)}+\left\Vert \mathtt{Z}- \mathtt{Z}'\right\Vert_{\gamma\left(s,t;E\right)}\right),\label{ident403}
		\end{eqnarray}
		\item[$2)$]
		For any $\Sigma_{\mathbb{F}}^{s,t}$-measurable mappings $Y,\,Z:[s,t]\times \Omega\longrightarrow E$ defining elements in $L^p(\Omega;\gamma(s,t;E))$, the space $L^p(\Omega;L^2([s,t];E))$ contains all elements of $\mathbb{S}_{\mathbf{G}(.,Y,Z)}$,\\
		where $\Sigma_{\mathbb{F}}^{s,t}:=\left\lbrace A\in \mathscr{B}([s,t])\otimes\mathscr{F}:\mathds{1}_A(u,.)\textrm{ is }\mathscr{F}_u\textrm{-measurable for every }\,u\in [s,t]\right\rbrace$, and $\mathbb{S}_{\mathbf{G}(.,Y,Z)}$ is consisting of all $\Sigma_{\mathbb{F}}^{s,t}$-measurable selections of the set-valued $\mathbf{G}(.,Y,Z)$ defined on $[s,t]\times\Omega$.
	\end{itemize}
\end{lem}
\begin{proof}
$1)$ Let $\mathtt{Y},\mathtt{Z}:[s,t]\longrightarrow E$ be two measurable mappings defining elements in $\gamma(s,t;E)$. Since $\mathds{1}_{[s,t]}\mathtt{Y},\,\mathds{1}_{[s,t]}\mathtt{Z}\in \gamma(0,T;E)$, we derive successively
\begin{eqnarray*}
& & K^2\left(\left\Vert \mathtt{Y}- \mathtt{Y}'\right\Vert_{\gamma\left(s,t;E\right)}+\left\Vert \mathtt{Z}- \mathtt{Z}'\right\Vert_{\gamma\left(s,t;E\right)}\right)^2\\
&=& K^2\left(\left\Vert \mathds{1}_{[s,t]}\mathtt{Y}- \mathds{1}_{[s,t]}\mathtt{Y}'\right\Vert_{\gamma\left(0,T;E\right)}+\left\Vert \mathds{1}_{[s,t]}\mathtt{Z}- \mathds{1}_{[s,t]}\mathtt{Z}'\right\Vert_{\gamma\left(0,T;E\right)}\right)^2\\
& \geq &\left\Vert \varrho\left(\mathbf{G}(.,\mathds{1}_{[s,t]}\mathtt{Y},\mathds{1}_{[s,t]}\mathtt{Z}),\mathbf{G}(.,\mathds{1}_{[s,t]}\mathtt{Y}',\mathds{1}_{[s,t]}\mathtt{Z}')\right)\right\Vert^2_{L^2([0,T])}\\
&=&\int_{[0,T]}\left(\mathds{1}_{[s,t]}+\mathds{1}_{[0,T]\setminus [s,t]}\right)\varrho^2\left(\mathbf{G}(u,\mathds{1}_{[s,t]}(u)\mathtt{Y}(u),\mathds{1}_{[s,t]}(u)\mathtt{Z}(u)),\mathbf{G}(u,\mathds{1}_{[s,t]}(u)\mathtt{Y}'(u),\mathds{1}_{[s,t]}(u)\mathtt{Z}'(u))\right)du\\
&=&\int_{[s,t]}\varrho^2\left(\mathbf{G}(u,\mathtt{Y}(u),\mathtt{Z}(u)),\mathbf{G}(u,\mathtt{Y}'(u),\mathtt{Z}'(u))\right)du.
\end{eqnarray*}
Thus the first aim follows.\\
$2)$ Let $Y,\,Z:[s,t]\times \Omega\longrightarrow E$, be two $\Sigma_{\mathbb{F}}^{s,t}$-measurable mappings belonging to $L^p\left(\Omega;\gamma\left(s,t;E\right)\right)$.
\\By virtue of Lemma \ref{lem401}, we infer that $\mathbf{G}(.,Y,Z)$ is $\Sigma^{s,t}_{\mathbb{F}}$-measurable and $\mathbb{S}_{\mathbf{G}(.,Y,Z)}$ is non-empty.
\\Let $h$ be in $\mathbb{S}_{\mathbf{G}(.,Y,Z)}$. Due to Lemma \ref{lem402}, there is an element $\tilde{h}:[s,t]\times\Omega\rightarrow E$ from $\mathbb{S}_{\mathbf{G}(.,0_E,0_E)}$ such that for all $(u,\omega)\in [s,t]\times\Omega$ we have
$$d_E(h(u,\omega),\mathbf{G}(u,0_E,0_E))=\left\Vert h(u,\omega)-\tilde{h}(u,\omega)\right\Vert_E.$$ 
Keeping in mind (\ref{ident403}) we derive successively:
\begin{eqnarray*}
\left\Vert h\right\Vert_{L^p(\Omega;L^2([s,t];E))}&\leq & \left\Vert d_E(h,\mathbf{G}(.,0_E,0_E))\right\Vert_{L^p(\Omega;L^2([s,t]))}+\left\Vert \tilde{h}\right\Vert_{L^p(\Omega;L^2([s,t]))}\\
&\leq & \left\Vert \varrho(\mathbf{G}(.,Y,Z),\mathbf{G}(.,0_E,0_E))\right\Vert_{L^p(\Omega;L^2([s,t]))}+\left\Vert \vert \mathbf{G}(.,0_E,0_E)\vert_{\varrho}\right\Vert_{L^p(\Omega;L^2([s,t]))}\\
&\leq & K\left(\left\Vert Y\right\Vert_{L_{\mathbb{F}}^p\left(\Omega;\gamma\left(s,t;E\right)\right)}+\left\Vert Z\right\Vert_{L_{\mathbb{F}}^p\left(\Omega;\gamma\left(s,t;E\right)\right)}\right)+\left\Vert \vert \mathbf{G}(.,0_E,0_E)\vert_{\varrho}\right\Vert_{L^p(\Omega;L^2([s,t]))}.
\end{eqnarray*}
Therefore, the second aim follows.
\end{proof}
\begin{rks}\label{rk401}
\begin{itemize}
\item[$(a)$]
Let $(r_n')_{n\geq 1}$ and $(r_n'')_{n\geq 1}$ be sequences of independent Rademacher random variables on independent probability spaces $(\Omega',\mathscr{F}',\mathbb{P}')$ and $(\Omega'',\mathscr{F}'',\mathbb{P}'')$ respectively, and let $(r_{nm}''')_{n,m\geq 1}$ be a doubly indexed sequence of independent Rademacher random variables on a probability space $(\Omega''',\mathscr{F}''',\mathbb{P}''')$(see \cite{neerven1}, Definition 3.2.9).\\
By taking instead of the finite Gaussian sequences $(\gamma_i')_{1\leq i\leq l},\,(\gamma_j'')_{1\leq j\leq k},\,(\gamma_{ij}''')_{1\leq i\leq l \atop 1\leq j\leq k}$, the finite Rademacher sequences $(r_i')_{1\leq i\leq l},\,(r_j'')_{1\leq j\leq k},\,(r_{ij}''')_{1\leq i\leq l \atop 1\leq j\leq k}$ respectively, and replacing in (\ref{ident241}), we get an equivalent property if the space $E$ has finite cotype (see [\cite{neerven2}, Corollary $7.2.10$]).\\Subsequently, every Banach space with the geometric property type $2$ has the contraction property according to [\cite{neerven2}, Corollary $7.3.11$] and [\cite{neerven6}, Proposition $2.7$].
\item[$(b)$]Every martingale type $2$ space has type $2$, and for UMD spaces these two properties are equivalent (see \cite{neerven1}, Definition $3.5.23$, Proposition $4.3.13$).
\end{itemize}
\end{rks}

\par The first main result of this section where we will also count on Definition \ref{def301} of a mild $L^p$-solution, is the following.
\begin{thm}\label{thm401}
Assume the hypotheses $(H_1')-(H_2')$ are satisfied. If $E$ has martingale type $2$, then the backward stochastic evolution inclusion (\ref{E2}) admits a mild $L^p$-solution $(Y,Z)$, and the associated process $g$ can be chosen $\Sigma_{\mathbb{F}}$-measurable belonging to $L^p\left(\Omega;L^2\left([0, T];E\right)\right)$.
\end{thm}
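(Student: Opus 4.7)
The plan is to mirror the Picard iteration of Theorem \ref{thm301}, but to exploit the specific structure available here: the convex-compact values of $\mathbf{G}$, the pointwise Hausdorff Lipschitz estimate in $(H_2')$-(iii), and the continuous embedding $L^2(T-\delta,T;E)\hookrightarrow \gamma(T-\delta,T;E)$ which holds in martingale type $2$ spaces. Thanks to this structure, the set-valued Hausdorff estimate on $\Lambda^{Y,Z}$ used in Theorem \ref{thm301} can be replaced by a pointwise best-approximation selection, and the auxiliary slack sequence $(\varepsilon_n)$ disappears.

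First, fix $\delta>0$ to be chosen later and initialize $Y^0 = Z^0 = 0$ on $[T-\delta,T]\times\Omega$; let $g^0$ be any measurable adapted selection of $\mathbf{G}(\cdot,0_E,0_E)$, whose existence and $L^p(\Omega;L^2(T-\delta,T;E))$-integrability are guaranteed by Remark \ref{rk401}. Inductively, assuming $Y^{n-1}, Z^{n-1}, g^{n-1}$ are constructed as measurable adapted processes in the appropriate spaces, I would apply Lemma \ref{lem401} to the composite set-valued $(t,\omega)\mapsto \mathbf{G}(t,\omega,Y^{n-1}(t,\omega),Z^{n-1}(t,\omega))$ to obtain measurable adaptedness with convex compact values, and then invoke Lemma \ref{lem402} with $\varphi = g^{n-1}$ and $\Theta = \mathbf{G}(\cdot, Y^{n-1}, Z^{n-1})$ to produce a measurable adapted selection $g^n$ satisfying pointwise
\[
\|g^n(t,\omega) - g^{n-1}(t,\omega)\|_E \;=\; d_E\bigl(g^{n-1}(t,\omega),\,\mathbf{G}(t,\omega,Y^{n-1},Z^{n-1})\bigr) \;\leq\; \varrho\bigl(\mathbf{G}(t,\omega,Y^{n-1},Z^{n-1}),\,\mathbf{G}(t,\omega,Y^{n-2},Z^{n-2})\bigr).
\]
Squaring, integrating on $[T-\delta,T]$, taking $L^p(\Omega)$-norm and applying $(H_2')$-(iii) yields
\[
\|g^n - g^{n-1}\|_{L^p(\Omega;L^2(T-\delta,T;E))} \;\leq\; K\bigl(\|Y^{n-1}-Y^{n-2}\|_{L^p_{\mathbb{F}}(\Omega;\gamma(T-\delta,T;E))} + \|Z^{n-1}-Z^{n-2}\|_{L^p_{\mathbb{F}}(\Omega;\gamma(T-\delta,T;E))}\bigr).
\]
The iterate $(Y^n,Z^n)$ is then obtained by solving the linear BSEE with driver $g^n$ and terminal datum $\xi$, using \cite{BSEE.B}.

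Next, I would reproduce the contraction step of Theorem \ref{thm301}: transporting the $L^2$-bound on $g^n - g^{n-1}$ to a $\gamma$-bound via the martingale type $2$ embedding, then applying Proposition \ref{prop252} to decompose $g^n - g^{n-1}$, Proposition \ref{prop251} and the Kalton--Weis multiplier theorem to control the semigroup convolutions, and the It\^o isomorphism of Theorem \ref{thm251} for the stochastic integrals, produces a contraction of the form
\[
\|Y^{n+1}-Y^n\|_{L^p_{\mathbb{F}}(\Omega;\gamma(T-\delta,T;E))} + \|Z^{n+1}-Z^n\|_{L^p_{\mathbb{F}}(\Omega;\gamma(T-\delta,T;E))} \;\leq\; \beta\,\delta^{1/2}\bigl(\|Y^n-Y^{n-1}\|_{L^p_{\mathbb{F}}(\Omega;\gamma(T-\delta,T;E))} + \|Z^n-Z^{n-1}\|_{L^p_{\mathbb{F}}(\Omega;\gamma(T-\delta,T;E))}\bigr),
\]
where $\beta$ depends on $p, E, K, \gamma(S)$ and the $L^2\hookrightarrow\gamma$ embedding constant. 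Choosing $\delta$ so that $\beta\delta^{1/2} < 1/2$ makes $(Y^n),(Z^n)$ Cauchy in $L^p_{\mathbb{F}}(\Omega;\gamma(T-\delta,T;E))$ and $(g^n)$ Cauchy in $L^p(\Omega;L^2(T-\delta,T;E))$; denote their limits $Y,Z,g$.

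The main obstacle I expect is checking the set-valued inclusion $g(t,\omega) \in \mathbf{G}(t,\omega,Y(t,\omega),Z(t,\omega))$ in the limit, and this is precisely where the convex-compact valuedness matters. Extracting a subsequence along which $g^{n}\to g$ and $(Y^n,Z^n)\to (Y,Z)$ pointwise a.e., the Hausdorff convergence of $\mathbf{G}(\cdot,Y^n,Z^n)$ to $\mathbf{G}(\cdot,Y,Z)$ supplied by $(H_2')$-(iii), together with closedness of the values, pins the limit $g$ inside $\mathbf{G}(\cdot,Y,Z)$ almost everywhere; its measurable adapted nature is inherited from the $g^n$. Passing to the limit in the mild formulation is then immediate from Proposition \ref{prop251} and Theorem \ref{thm251}, and the local solution on $[T-\delta,T]$ is extended to $[0,T]$ by the same concatenation procedure as in the fourth step of Theorem \ref{thm301}, using $Y_{T-k\delta}$ as terminal condition on successive windows of length $\delta$.
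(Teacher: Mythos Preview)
Your approach is essentially the paper's: iterate via the best-approximation selection of Lemma~\ref{lem402}, use the martingale type~$2$ embedding $L^2\hookrightarrow\gamma$ to transport the $L^p(\Omega;L^2)$ bound on $g^{n+1}-g^n$ into $L^p_{\mathbb{F}}(\Omega;\gamma)$, and then rerun the contraction machinery of Theorem~\ref{thm301} (now without the slack sequence $(\varepsilon_n)$, exactly as you note). The paper organises the step identically, recording
\[
\left\Vert g^{n+1}-g^{n}\right\Vert_{L_{\mathbb{F}}^p(\Omega;\gamma(T-\delta,T;E))}
\le \varsigma_{2,E}^{\gamma}\left\Vert g^{n+1}-g^{n}\right\Vert_{L^p(\Omega;L^2(T-\delta,T;E))}
\le K\varsigma_{2,E}^{\gamma}\bigl(\|Y^{n}-Y^{n-1}\|+\|Z^{n}-Z^{n-1}\|\bigr)
\]
and then invoking the proof of Theorem~\ref{thm301} verbatim to obtain the geometric contraction and the concatenation.

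The one genuine slip is your closure step. You claim a subsequence along which $(Y^n,Z^n)\to(Y,Z)$ \emph{pointwise a.e.\ in $E$}, but convergence in $L^p_{\mathbb{F}}(\Omega;\gamma(T-\delta,T;E))$ does not yield pointwise-in-$(t,\omega)$ convergence in $E$ for a type~$2$ space; that direction of the embedding needs cotype~$2$ (this is precisely the content of Lemma~\ref{lem302}, which is not available here). Fortunately you do not need it. Since $g^n\in\mathbf{G}(\cdot,Y^{n-1},Z^{n-1})$, hypothesis $(H_2')$-(iii) gives, for a.e.\ $\omega$,
\[
\bigl\| d_E\bigl(g^n,\mathbf{G}(\cdot,Y,Z)\bigr)\bigr\|_{L^2(T-\delta,T)}
\le \bigl\|\varrho\bigl(\mathbf{G}(\cdot,Y^{n-1},Z^{n-1}),\mathbf{G}(\cdot,Y,Z)\bigr)\bigr\|_{L^2(T-\delta,T)}
\le K\bigl(\|Y^{n-1}-Y\|_{\gamma}+\|Z^{n-1}-Z\|_{\gamma}\bigr),
\]
so taking $L^p(\Omega)$-norms, combining with $g^n\to g$ in $L^p(\Omega;L^2)$, and letting $n\to\infty$ yields $\|d_E(g,\mathbf{G}(\cdot,Y,Z))\|_{L^p(\Omega;L^2)}=0$; hence $g(t,\omega)\in\mathbf{G}(t,\omega,Y,Z)$ a.e.\ by closedness of the values. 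The paper itself asserts this inclusion without spelling out the argument, so the gap in your write-up is more a matter of phrasing than of strategy.
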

\begin{proof}
Let $0<\delta \leq T$. By applying iteratively Lemma \ref{lem402} and Lemma \ref{lem403}, we can construct $\left(Y^n\right)_{n},\,\left(Z^n\right)_{n},\,\left(g^n\right)_{n}$ sequences of $\Sigma_{\mathbb{F}}^{T-\delta,T}$-measurable processes such that for any $n$ we have:
\begin{eqnarray}\label{eq2}
\left\{
	\begin{array}{lll}
		 (i) &
g^n \in \mathbb{S}_{\mathbf{G}(.,Y^{n-1},Z^{n-1})}\bigcap L^p\left(\Omega;L^2\left([T-\delta, T];E\right)\right)    \text{ such that }\\&
	\hspace*{2cm} d_E\left(g^{n-1},\mathbf{G}(.,Y^{n-1},Z^{n-1})\right)=\left\Vert g^{n}-g^{n-1}\right\Vert_{E} ;

	  \\ (ii) & 	Y^n\; (resp.\, Z^n)\text{ defines element of the space  }L_{\mathbb{F}}^p\left(\Omega;\gamma\left(T-\delta,T;E\right)\right) \text{ such that}  \\ &
	   Y_t^{n}+\displaystyle\int_t^{T} S(u-t) g_u^n du+\int_t^{T} S(u-t) Z_u^{n}dW_u=S(T-t)\xi\; \textrm{ in }L^p\left(\Omega;E\right).

	\end{array}
	\right.
\end{eqnarray}
By virtue of Theorem $9.2.10$ in \cite{neerven2} we derive that
\begin{eqnarray}\label{ident404}
& &\left\Vert g^{n+1}- g^{n}\right\Vert_{L_{\mathbb{F}}^p\left(\Omega;\gamma\left(T-\delta,T;E\right)\right)}\nonumber\\
&\leq & \varsigma_{2,E}^{\gamma}\left\Vert g^{n+1}- g^{n}\right\Vert_{L^p(\Omega;L^2([T-\delta,T];E))}\nonumber\\
&\leq &K \varsigma_{2,E}^{\gamma}\left(\left\Vert Y^n- Y^{n-1}\right\Vert_{L_{\mathbb{F}}^p\left(\Omega;\gamma\left(T-\delta,T;E\right)\right)}+\left\Vert Z^n- Z^{n-1}\right\Vert_{L_{\mathbb{F}}^p\left(\Omega;\gamma\left(T-\delta,T;E\right)\right)}\right)
\end{eqnarray}
where $\varsigma_{2,E}^{\gamma}$ is the Gaussian type $2$ constant (\cite{neerven2}, Definition $7.1.17$). \\Taking in mind Remarks \ref{rk401} and reasoning in the same way as in the proof of Theorem $\ref{thm301}$, we can choose $\delta$ in order to ensure that there is a positive constant $C$ independent on $n$ such that:
\begin{eqnarray}\label{ident405}
\left\Vert Y^{n+1}- Y^{n}\right\Vert_{L_{\mathbb{F}}^p\left(\Omega;\gamma\left(T-\delta,T;E\right)\right)}+\left\Vert Z^{n+1}- Z^{n}\right\Vert_{L_{\mathbb{F}}^p\left(\Omega;\gamma\left(T-\delta,T;E\right)\right)}\leq \dfrac{C}{2^n}.
\end{eqnarray}
By Combining (\ref{ident404}) and (\ref{ident405}) we infer that the sequence $(g^n)_n$ converges to a measurable and adapted process $g$ belonging to the space $L^p(\Omega;L^2([T-\delta,T];E))$ and defining element in $L^p\left(\Omega;\gamma\left(T-\delta,T;E\right)\right)$.\\
Next, due to Proposition \ref{prop252} there exists $\tau\in L_{\mathbb{F}}^p\left(\Omega;\gamma\left({T-\delta},T;\gamma\left({T-\delta},T;E\right)\right)\right)$ such that for almost all $u\in [{T-\delta},T]$ we have in $L^p(\Omega;E)$ the following equality
\begin{eqnarray*}
g_u &=&\mathbb{E}\left(g_u\right)+\int_{T-\delta}^u \tau(u,s)dW_s.
\end{eqnarray*}
By martingale representation in UMD spaces, there exists $\Psi\in L_{\mathbb{F}}^p(\Omega;\gamma(T-\delta,T;E))$ such that $\int_u^T\Psi_sdW_s=\xi-\mathbb{E}(\xi\vert \mathscr{F}_u)$.
Now, let's define a mapping $Z:[T-\delta,T]\times \Omega\longrightarrow E$ as follows
\begin{eqnarray*}
Z_u=S(T-u)\Psi_u+\int_u^T S(s-u)\tau(s,u)ds.
\end{eqnarray*}
Thus, similarly to the estimates (\ref{estimates}) we have
\begin{eqnarray*}
\left\Vert Z^{n}-Z\right\Vert_{L_{\mathbb{F}}^p\left(\Omega;\gamma\left({T-\delta},T;E\right)\right)} \lesssim_{p,E}  \delta^{\frac{1}{2}}\gamma(S)\left\Vert g^{n}-g\right\Vert_{L_{\mathbb{F}}^p\left(\Omega;\gamma\left({T-\delta},T;E\right)\right)}.
\end{eqnarray*}
Therefore, the sequence $(Z^n)_n$ converges in $L_{\mathbb{F}}^p\left(\Omega;\gamma\left(T-\delta,T;E\right)\right)$ to $Z$, and by a similar arguments as in the proof of [Theorem \ref{thm301}, $3^{th}$ step], the sequence $(Y^n)_n$ converges in both spaces $L_{\mathbb{F}}^p\left(\Omega;\gamma\left(T-\delta,T;E\right)\right)$ and $C([T-\delta,T],L^p(\Omega;E))$ to a measurable and adapted process $Y$.\\
Taking into account Lemma \ref{lem403} we derive that the sequence $\left(d_E(g^n,G(.,Y,Z))\right)_{n\geq 1}$ converges to $0$ for $a.e.(u,\omega)$. Then $g(u,\omega)\in G(u,Y(u,\omega),Z(u,\omega))$ for $a.e.(u,\omega)\in [T-\delta,T]\times \Omega$.\\
Consequently, the problem (\ref{E2}) possesses a mild $L^p$-solution $(Y,Z)$ on $[T-\delta,T]$, with an associated process $g$ measurable and adapted belonging to $L^p(\Omega;L^2([T-\delta,T];E))$.\\
What remains to complete the arguments is mentioned in the proof of Theorem \ref{thm301}.
\end{proof}

\par We are left with a last particular result, which is characterized by the specificity of the adopted space, weaken the assumptions, as well as the formulation of its hypothesis linked to the Lipschitz condition. To conclude, $(H_1'')$ will have the content of $(H_1)'-(i)$, further we launch another hypothesis $(H_2'')$, instead of $(H_2 )'$, which is obtained by modifying the part $(H_2')-(iii)$ while keeping $(H_2 ')-(i)$ and $(H_2')-(ii)$ unchanged. We take:\\
\par $(H_2'')-(iii)$\; For all $ y,y',z,z'\in E$ we have:
$$\varrho \left(\mathbf{G}\left(.,y,z\right);\mathbf{G}\left(.,y',z'\right)\right)\leq \tilde{K}\left(\left\Vert y-y'\right\Vert_{E}+\left\Vert z-z'\right\Vert_{E}\right).$$
\begin{cor}\label{cor401}
Assume the hypotheses $(H_1'')-(H_2'')$ are satisfied. If $E$ is a separable Hilbert space, then the backward stochastic evolution inclusion (\ref{E2}) admits a mild $L^p$-solution $(Y,Z)$, and the associated process $g$ can be chosen $\Sigma_{\mathbb{F}}$-measurable and belonging to $L^p(\Omega;L^2([0,T];E))$.
\end{cor}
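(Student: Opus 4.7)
The plan is to reduce Corollary~\ref{cor401} directly to Theorem~\ref{thm401} by checking that a separable Hilbert space automatically fits into the framework of that theorem, and that the stronger pointwise Lipschitz condition $(H_2'')$-$(iii)$ entails the integrated condition $(H_2')$-$(iii)$.

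First I would recall the standard facts that a separable Hilbert space $E$ is a UMD Banach space of martingale type $2$ and that one has the isometric identification $\gamma(0,T;E)=L^2(0,T;E)$ mentioned in the introduction. Since $(H_1'')$ is literally $(H_1')$, and since the measurability condition $(H_2'')$-$(i)$ and the integrability condition $(H_2'')$-$(ii)$ are identical to $(H_2')$-$(i)$ and $(H_2')$-$(ii)$, the only point requiring actual work is the derivation of $(H_2')$-$(iii)$ from $(H_2'')$-$(iii)$.

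To that end, given measurable mappings $y,y',z,z':[0,T]\longrightarrow E$ defining elements of $\gamma(0,T;E)=L^2(0,T;E)$, I apply $(H_2'')$-$(iii)$ pointwise in $t$ with the values $y(t),y'(t),z(t),z'(t)$ to get
\[
\varrho\bigl(\mathbf{G}(t,y(t),z(t)),\mathbf{G}(t,y'(t),z'(t))\bigr)\leq \tilde K\bigl(\|y(t)-y'(t)\|_E+\|z(t)-z'(t)\|_E\bigr),
\]
the measurability of the left-hand side in $t$ being ensured by Lemma~\ref{lem401} together with the continuity of $\varrho$. Squaring, integrating over $[0,T]$, using the elementary inequality $(a+b)^2\leq 2(a^2+b^2)$, taking square roots, and invoking the Hilbert-space isometry then yield
\[
\left\|\varrho\bigl(\mathbf{G}(\cdot,y,z),\mathbf{G}(\cdot,y',z')\bigr)\right\|_{L^2(0,T)}\leq \sqrt{2}\,\tilde K\bigl(\|y-y'\|_{\gamma(0,T;E)}+\|z-z'\|_{\gamma(0,T;E)}\bigr),
\]
which is precisely $(H_2')$-$(iii)$ with constant $K=\sqrt{2}\,\tilde K$.

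With all the hypotheses of Theorem~\ref{thm401} thus verified, that theorem delivers a mild $L^p$-solution $(Y,Z)$ of BSEI$(2)$ together with a measurable adapted associated process $g\in L^p(\Omega;L^2(0,T;E))$, which is the claim of the corollary. I do not anticipate any serious obstacle in this reduction; the substantive content is the simple observation that in the Hilbert setting the $\gamma$-norm collapses to the $L^2(0,T;E)$-norm, and this is exactly what upgrades a pointwise Lipschitz bound to the integrated form $(H_2')$-$(iii)$ demanded by the general theorem.
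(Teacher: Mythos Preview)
Your proposal is correct and follows essentially the same route as the paper: both reduce the corollary to Theorem~\ref{thm401} by observing that a separable Hilbert space is UMD of (martingale) type~$2$ and that the isometry $\gamma(0,T;E)=L^2(0,T;E)$ upgrades the pointwise Lipschitz bound $(H_2'')$-$(iii)$ to the integrated one $(H_2')$-$(iii)$. The only extra remark in the paper's proof that you omit is that in a Hilbert space the uniformly bounded family $\{S_t\}_{t\in[0,T]}$ is automatically $\gamma$-bounded (needed implicitly in the proof of Theorem~\ref{thm401}); this is immediate and does not affect the validity of your argument.
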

\begin{proof}
Since E is a Hilbert space, then it is a UMD Banach space and has type $2$ according to Theorem $7.3.1$ in \cite{neerven2}. Moreover, the set $\lbrace S_t\rbrace_{t\in [0,T]}$ is uniformly bounded, thus it is also $\gamma$-bounded due to Theorem $8.1.3$ in \cite{neerven2}. Therefore, the claim is a straightforward result of Theorem $\ref{thm401}$ by taking into account the isometry $\gamma(0,T;E)=L^2([0,T];E)$.
\end{proof}

In the end, we determine a relationship between the set of assumptions used in the previous section and the set of the last one:
\begin{thm}
Let $G:\left[0,T\right]\times\Omega\times E\times E\rightarrow \mathscr{K}_{ccmpt}\left(E\right)$ be a $\Sigma_{\mathbb{F}}\otimes \mathscr{B}(E)\otimes \mathscr{B}(E)$-measurable set-valued, and let us consider the two following packages of assertions:
\begin{eqnarray*}
(\Gamma_1)\left\{
	\begin{array}{lll}
		 (i) &
\textrm{For all }\,Y,Z \textrm{ defining elements of } L_{\mathbb{F}}^2\left(\Omega;\gamma\left(0,T;E\right)\right),\;\textrm{the set}\\&
\Lambda^{Y,Z}:=\left\lbrace g\in L_{\mathbb{F}}^2\left(\Omega;\gamma\left(0, T;E\right)\right):g\in G(.,Y,Z)\,a.e.(u,\omega)\in [0, T]\times \Omega\right\rbrace\;\textrm{is non-empty};

	\\ (ii)& \sup_{g\in \Lambda^{0_E,0_E}}\left\Vert g\right\Vert_{L_{\mathbb{F}}^2\left(\Omega;\gamma\left(0,T;E\right)\right)}<\infty;

	  \\ (iii) & 	\textrm{There exists } K_1>0 \textrm{ s.t. for all }\,Y,Z,Y',Z' \textrm{ defining elements of }L_{\mathbb{F}}^2\left(\Omega;\gamma\left(0,T;E\right)\right):  \\ &
	   \rho(\Lambda^{Y,Z},\Lambda^{Y',Z'})\leq K_1\left(\left\Vert Y- Y'\right\Vert_{L_{\mathbb{F}}^2\left(\Omega;\gamma\left(0,T;E\right)\right)}+\left\Vert Z- Z'\right\Vert_{L_{\mathbb{F}}^2\left(\Omega;\gamma\left(0,T;E\right)\right)}\right),
	   \\& \textrm{where } \rho(\Lambda^{Y,Z},\Lambda^{Y',Z'}) \textrm{ is the Hausdorff distance between } \Lambda^{Y,Z}\textrm{ and }\Lambda^{Y',Z'}.

	\end{array}
	\right.
\end{eqnarray*}
\begin{eqnarray*}
(\Gamma_2)\left\{
	\begin{array}{lll}
		 (i) & \mathbb{E}\left( \left\Vert\left\vert G\left(.,0_{E},0_{E}\right)\right\vert_{\varrho}\right\Vert_{L^2([0,T])}^2\right)<\infty
;

	  \\ (ii) & 	\textrm{There exists } K_2>0  \textrm{ s.t. for almost everywhere}\, (u,\omega)\,\textrm{and for all }\,y,z,y',z' \textrm{ of } E:  \\ &
	   \varrho \left(G\left(u,y,z\right);G\left(u,y',z'\right)\right)\leq K_2\left(\left\Vert y-y'\right\Vert_{E}+\left\Vert z-z'\right\Vert_{E}\right).

	\end{array}
	\right.
\end{eqnarray*}
If $E$ is a Hilbert space, then $(\Gamma_1)$ and $(\Gamma_2)$ are equivalent.
\end{thm}
\begin{proof}
Assume that $(\Gamma_1)$ is fulfilled.\\
By virtue of Lemma \ref{lem303} and [\cite{set-stochastic-ch2}, Remark $2.3.2$] we infer
\begin{eqnarray}\label{ident406}
\sup_{g\in \Lambda^{0_E,0_E}}\left\Vert g\right\Vert^2_{L_{\mathbb{F}}^2\left(\Omega;\gamma\left(0,T;E\right)\right)}=\sup_{g\in S^2_{\Sigma_{\mathbb{F}}}(G(.,0_E,0_E))}\int_{[0,T]\times\Omega}\left\Vert g\right\Vert_E^2 dud\mathbb{P}=\int_{[0,T]\times\Omega}\sup_{x\in G(u,0_E,0_E)}\left\Vert x\right\Vert_E^2 dud\mathbb{P}.
\end{eqnarray}
Then $(\Gamma_2)-(i)$ is obtained.\bigskip\\
Let $\tilde{Y},\tilde{Z}$ be two measurable and adapted mappings defining elements of $L_{\mathbb{F}}^2\left(\Omega;\gamma\left(0,T;E\right)\right)$.\\
Keeping in mind Lemma $1.3.1$ in \cite{set-stochastic-ch1} we have
\begin{eqnarray*}
\left\vert\Lambda^{\tilde{Y},\tilde{Z}}\right\vert_{\rho} &\leq & \rho\left(\Lambda^{\tilde{Y},\tilde{Z}},\Lambda^{0_E,0_E}\right)+\rho\left(\Lambda^{0_E,0_E},\left\lbrace 0_{L_{\mathbb{F}}^2\left(\Omega;\gamma\left(0,T;E\right)\right)}\right\rbrace\right)\\
&\leq & K_1\left(\left\Vert \tilde{Y}\right\Vert_{L_{\mathbb{F}}^2\left(\Omega;\gamma\left(0,T;E\right)\right)}+\left\Vert \tilde{Z}\right\Vert_{L_{\mathbb{F}}^2\left(\Omega;\gamma\left(0,T;E\right)\right)}\right)+ \sup_{g\in \Lambda^{0_E,0_E}}\left\Vert g\right\Vert_{L_{\mathbb{F}}^2\left(\Omega;\gamma\left(0,T;E\right)\right)}.
\end{eqnarray*}
Thus, by taking into account once more Lemma \ref{lem303} we derive 
\begin{eqnarray}\label{ident407}
\sup_{g\in S^2_{\Sigma_{\mathbb{F}}}(G(.,\tilde{Y},\tilde{Z}))}\int_{[0,T]\times\Omega}\left\Vert g\right\Vert_E^2 dud\mathbb{P}<\infty.
\end{eqnarray}
Now, let's take $Y=\mathds{1}_A y,\;Y'=\mathds{1}_A y',\;Z=\mathds{1}_A z,\;Z'=\mathds{1}_A z',$ where $(y,y',z,z')\in E^4$ and $A$ be a non-empty $\Sigma_{\mathbb{F}}$-measurable set. According to (\ref{ident407}) and [\cite{set-stochastic-ch2}, Theorem $2.3.4$] we have successively
\begin{eqnarray*}
\sup_{g\in \Lambda^{Y,Z}}\inf_{g'\in \Lambda^{Y',Z'}}\left\Vert g-g'\right\Vert^2_{L_{\mathbb{F}}^p\left(\Omega;\gamma(0,T;E)\right)}
&=& \sup_{g\in S^2_{\Sigma_{\mathbb{F}}}(G(.,Y,Z))}\inf_{g'\in S^2_{\Sigma_{\mathbb{F}}}(G(.,Y',Z'))}\left\Vert g-g'\right\Vert^2_{L^2\left([0,T]\times\Omega,\Sigma_{\mathbb{F}};E\right)}\\
&=&\int_{[0,T]\times\Omega}\sup_{x\in G(u,Y,Z)}\inf_{x'\in G(u,Y',Z')}\left\Vert x-x'\right\Vert_E^2dud\mathbb{P}\\
&=&\int_A\sup_{x\in G(u,y,z)}\inf_{x'\in G(u,y',z')}\left\Vert x-x'\right\Vert_E^2dud\mathbb{P}\\
&=&\int_A\tilde{\varrho}^2(G(u,y,z),G(u,y',z'))dud\mathbb{P},
\end{eqnarray*}
where $\tilde{\varrho}\left( A,\,B\right):=\displaystyle{\sup_{ a\in A}}\;\displaystyle{\inf_{ b\in B}}\left\Vert a-b\right\Vert_E$ for any $A,B\in\mathscr{K}_{cmpt}(E)$.
\\On the other hand, 
\begin{eqnarray*}
\left\Vert Y-Y'\right\Vert_{L^2\left(\Omega;\gamma(0,T;E)\right)}=\left\Vert Y-Y'\right\Vert_{L^2\left(\Omega;L^2([0,T];E)\right)}=\left\Vert y-y'\right\Vert_E\left((\lambda\otimes\mathbb{P})(A)\right)^{\frac{1}{2}}.
\end{eqnarray*}
Then,
$$\int_A\tilde{\varrho}^2(G(u,y,z),G(u,y',z'))dud\mathbb{P}\leq 2 K_1^2\int_A\left(\left\Vert y-y'\right\Vert_E^2+\left\Vert z-z'\right\Vert_E^2\right)dud\mathbb{P}.$$
It follows that
$$\tilde{\varrho}(G(u,y,z),G(u,y',z'))\leq 2^{\frac{1}{2}} K_1\left(\left\Vert y-y'\right\Vert_E+\left\Vert z-z'\right\Vert_E\right)\quad a.e.(u,\omega).$$
Therefore $(\Gamma_2)-(ii)$ is obtained according to the separability of the space $E$.\bigskip\\
Conversely, assume that $(\Gamma_2)$ is fulfilled.\\
Since $G(.,0_E,0_E)$ is $\Sigma_{\mathbb{F}}$-measurable, then $(\Gamma_2)-(i)$ allows us to ensure that $S^2_{\Sigma_{\mathbb{F}}}(G(.,0_E,0_E))$ is non-empty. Therefore $(\Gamma_1)-(ii)$ is obtained by virtue of Lemma \ref{lem303} and (\ref{ident406}).\\
Let $Y,Z,Y',Z'$ be E-valued $\Sigma_{\mathbb{F}}$-measurable processes defining elements of $L_{\mathbb{F}}^2\left(\Omega;\gamma(0,T;E)\right)$.\\
Thanks to Lemma \ref{lem401}, the set-valued  $G(.,Y,Z)$ is $\Sigma_{\mathbb{F}}$-measurable. Moreover, 
\begin{eqnarray*}
\varrho(G(u,Y(u,\omega),Z(u,\omega)),\left\lbrace 0_E\right\rbrace)&\leq & \varrho(G(u,0_E,0_E),\left\lbrace 0_E\right\rbrace)+\varrho\left(G(u,Y(u,\omega),Z(u,\omega)),G(u,0_E,0_E)\right)\\
&\leq & \left\vert G(u,0_E,0_E)\right\vert_{\varrho}+K_2\left(\left\Vert Y(u,\omega)\right\Vert_E+\left\Vert Z(u,\omega)\right\Vert_E\right),
\end{eqnarray*}
for a.e. $(u,\omega)$. Thus $S^2_{\Sigma_{\mathbb{F}}}(G(.,Y,Z))$ is non-empty, and by Lemma \ref{lem303} the set $\Lambda^{Y,Z}$ so is.\\
Let $g$ be a $\Sigma_{\mathbb{F}}$-measurable process defining element of $\Lambda^{Y,Z}$.\\
Keeping in mind Lemma \ref{lem402}, we infer that there exists a $\Sigma_{\mathbb{F}}$-measurable process $h$ selection of the set-valued $\mathbf{G}(.,Y',Z')$ such that for any $(u,\omega)\in [0,T]\times \Omega$ we have:
\begin{eqnarray*}
d_{E}\left(g(u,\omega),G(u,Y'(u,\omega),Z'(u,\omega))\right)=\left\Vert g(u,\omega)-h(u,\omega)\right\Vert_{E}.
\end{eqnarray*}
It follows
\begin{eqnarray*}
\left\Vert g(u,\omega)-h(u,\omega)\right\Vert_{E} &\leq &\varrho(G(u,Y(u,\omega),Z(u,\omega)),G(u,Y'(u,\omega),Z'(u,\omega)))\\
&\leq & K_2\left(\left\Vert Y(u,\omega)-Y'(u,\omega)\right\Vert_{E}+\left\Vert Z(u,\omega)-Z'(u,\omega)\right\Vert_{E}\right)
\end{eqnarray*}
for a.e. $(u,\omega)$. Thus,
\begin{eqnarray*}
\left\Vert g-h\right\Vert_{L_{\mathbb{F}}^2(\Omega,\gamma(0,T;E))}\leq  K_2\left(\left\Vert Y-Y'\right\Vert_{L_{\mathbb{F}}^2(\Omega,\gamma(0,T;E))}+\left\Vert Z-Z'\right\Vert_{L_{\mathbb{F}}^2(\Omega,\gamma(0,T;E))}\right).
\end{eqnarray*}
Since $h$ defines an element of $\Lambda^{Y',Z'}$, then we get to
\begin{eqnarray*}
d_{L_{\mathbb{F}}^2(\Omega,\gamma(0,T;E))}(g,\Lambda^{Y',Z'})\leq K_2\left(\left\Vert Y-Y'\right\Vert_{L_{\mathbb{F}}^2(\Omega,\gamma(0,T;E))}+\left\Vert Z-Z'\right\Vert_{L_{\mathbb{F}}^2(\Omega,\gamma(0,T;E))}\right).
\end{eqnarray*}
Consequently, $(\Gamma_1)-(iii)$ is obtained.
\end{proof}

\end{document}